\def\NAT@def@citea{\def\@citea{\NAT@separator}}
\theoremstyle{plain}
\newtheorem{theorem}{Theorem}[section]
\newtheorem{corollary}[theorem]{Corollary}
\newtheorem{lem}[theorem]{Lemma}
\theoremstyle{definition}
\theoremstyle{remark}
\newtheorem{remark}{Remark}
\renewcommand{\c}{\text{Cos}}
\renewcommand{\S}{\text{Sin}}
\newcommand{\be}{\begin{equation}}
  \newcommand{\ee}{\end{equation}}
  \numberwithin{equation}{section}
\begin{document}
\title{ Indefinite $q$-integrals from a method using $q$-Ricatti equations }
\author{
\name{G. E. Heragy\textsuperscript{a}\thanks{CONTACT G. E. Heragy. Email: moonegam123@gmail.com}, Z. S.I. Mansour, \textsuperscript{b}\thanks{CONTACT Z. S.I. Mansour. Email: zsmansour@cu.edu.eg} and K. M. Oraby\textsuperscript{a}\thanks{CONTACT K. M. Oraby. Email: koraby83@yahoo.com}}
\affil{\textsuperscript{a}Mathematics Department, Faculty of Science, Suez University, Suez, Egypt.
 \\ \textsuperscript{b}Mathematics Department, Faculty of Science, Cairo University, Giza, Egypt.
}
}
\maketitle
\begin{abstract}
Earlier work  introduced a method for obtaining indefinite $q$-integrals of $q$-special functions from the
 second-order linear $q$-difference equations that define them. In this paper, we reformulate the method in terms of $q$-Riccati equations, which are nonlinear and first order. We derive $q$-integrals using fragments of these Riccati equations, and here only
two specific fragment types are examined in detail.
The  results  presented here are
for $q$-Airy function, Ramanujan function, Jackson $q$-Bessel functions, discrete $q$-Hermite polynomials, $q$-Laguerre polynomials, Stieltjes-Wigert polynomial, little $q$-Legendre, and big $q$-Legendre polynomials.
\end{abstract}

\begin{keywords}
$q$-integrals, $q$-Bernoulli fragment, $q$-Linear fragment, Simple algebraic form, $q$-Airy function, Ramanujan function.
\end{keywords}
 \quad \textbf{Mathematics Subject Classification (2020)} 05A30, 33D05, 33D15, 33C10
\section{Introduction and Preliminaries }
In \cite{qqq}, we introduced a  method  to obtain indefinite $q$-integrals of the form
\begin{align}\label{mkl}
 & \int f(x) \Big( \frac{1}{q} D_{q^{-1}} D_qh(x)+   p(x) D_{q^{-1}} h(x) +r(x) h(x)\Big) y(x) d_qx \nonumber\\&=
   f(x/q)\Big (y(x/q) D_{q^{-1}}h(x) - h(x/q) D_{q^{-1}}y(x) \Big)
\end{align}
or
\begin{align}\label{mkrl}
 & \int F(x) \Big( \frac{1}{q} D_{q^{-1}} D_qk(x)+   p(x) D_q k(x) +r(x) k(x)\Big) y(x) d_qx \nonumber\\&=
   F(x)\Big ( y(x) D_{q^{-1}}k(x) - k(x) D_{q^{-1}}y(x) \Big),
\end{align}
where the functions $p(x)$ and $r(x)$ are continuous functions in an interval $I$.
In  $({\ref{mkl}})$ the function $y(x)$ is a solution of
\begin{equation}\label{mxlk}
  \frac{1}{q}D_{q^{-1}} D_q y(x) +  p(x)D_{q^{-1}}y(x) + r(x) y(x) =0,
\end{equation}
$f(x)$ is a solution of
\begin{equation}\label{fht}
  \frac{1}{q} D_{q^{-1}} f(x) =  p(x) f(x)
\end{equation}
and  $ h(x) $  is  an arbitrary function.
In  $({\ref{mkrl}})$ the function  $y(x)$ is a solution of
\begin{equation}\label{mx}
  \frac{1}{q}D_{q^{-1}} D_q y(x) +  p(x)D_q y(x) + r(x) y(x) =0,
\end{equation}
$F(x)$ is a solution of
\begin{equation}\label{frht5}
   D_q F(x) =  p(x) F(x)
\end{equation}
and  $ k(x) $  is an  arbitrary function.
The indefinite $q$-integral
\begin{equation}\label{newmjh}
  \int f(x) d_qx = F(x),
\end{equation}
means that $D_q F(x) = f(x)$, where $D_q$ is Jackson's $q$-difference operator which is  defined in $({\ref{newfgbv}})$ below.
The indefinite $q$-integrals in $({\ref{mkl}})$ and $({\ref{mkrl}})$ generalize Conway's indefinite integral
\begin{equation}\label{intg1}
  \int f(x) \left( \frac{d^2 h}{dx^2} +p(x) \frac{d h}{dx} + r(x) h(x)\right)y(x) dx = f(x) \left( \frac{d h}{dx} y(x) -h(x)  \frac{d y}{dx}\right),
\end{equation}
where $y(x)$ is a solution of
\begin{equation}\label{dif1}
   \frac{d^2 y}{dx^2} +p(x) \frac{d y}{dx} + r(x) y(x)=0,
\end{equation}
 $f(x)$ is a solution of $ f'(x)= p(x) f(x) $ and $ h(x) $  is  an  arbitrary function. See \cite{conway,conwa,conw,con,cpbg,co,c1}.
Conway in \cite{c2,c3}  reformulated $({\ref{intg1}})$  to take  the form
\begin{equation}\label{belown}
  \int f(x) h(x) \left( u'(x) +u^2(x)+ p(x) u(x)  + r(x) \right)y(x)dx = f(x) h(x) \left( u(x) y(x) - y'(x)\right),
\end{equation}
where
\begin{equation*}
  h(x) = \exp (\int u(x) dx),
\end{equation*}
and $u(x)$ is an  arbitrary function.
Then he derived many indefinite integrals by considering fragments of the Ricatti equation \be \nonumber u'(x) +u^2(x)+ p(x) u(x)  + r(x)=0,\ee
of the form
\begin{equation}\label{bern}
  u'(x) +u^2(x)+ p(x) u(x)=0,
\end{equation}
or
\begin{equation}\label{lin}
  u'(x) + p(x) u(x) + r(x)=0.
\end{equation}
He called $({\ref{bern}})$ the Bernoulli fragment, and $({\ref{lin}})$ the linear fragment.\\
This paper is organized as follows. In the remaining of this section, we introduce the  $q$-notations and notions  needed in the sequel. In Section 2, we give a $q$-analogue of Conway's indefinite integral formula in  $({\ref{belown}})$  to the $q$-setting. In Section 3, we introduce indefinite $q$-integrals by considering Bernoulli fragments of the $q$-Ricatti equation. In Section 4, we introduce indefinite $q$-integrals by considering Linear fragments of the $q$-Ricatti equation. Finally,  in Section 5, we introduce new $q$-integrals by setting $u(x) = \frac{a}{x}+b$, with appropriate choice of $a$ and $b$ in $({\ref{4444}})$ and $({\ref{44442nb}})$. Finally, we added an appendix for all $q$-special functions, we used in this paper.
Throughout this paper, $q$ is a positive number less than 1, $\mathbb{N}$ is the set of positive integers,  and $\mathbb{N}_0$ is the set of non-negative integers. We use $I$  to denote an interval with zero or infinity  or an accumulation point.
We follow Gasper and Rahman \cite{rahman} for the definitions of the $q$-shifted factorial,  $q$-gamma, $q$-beta function,  and  $q$-hypergeometric series.\\
A $q$-natural number $[n]_q$ is defined by $ [n]_q=\frac{1-q^n}{1-q},\,\,n\in\mathbb N_0$.
The $q$-derivative $D_qf(x)$ of a function $f$ is defined by
\begin{equation}\label{newfgbv}
  (D_qf)(x)=\frac{f(x)-f(qx)}{(1-q)x}, \text{if} \quad x\neq0,
\end{equation}
and $(D_qf)(0)= f'(0)$ provided that $f'(0)$ exists, see \cite{j, hein}.
Jackson's $q$-integral  of a function $f$   is defined by
\begin{equation}\label{nun}
  \int_0^a f(t)d_qt:=(1-q)a\sum_{n=0}^\infty q^n f(aq^n) ,\,\,a\in\mathbb R,
\end{equation}
provided that the corresponding series in ({\ref{nun}})  converges, see \cite{jackson}.\\
The fundamental theorem of $q$-calculus  \cite[Eq.(1.29)]{Zei}
\begin{equation}\label{fundamen}
  \int_{0}^{a} D_q f(t) d_qt = f(a) -\lim_{n\rightarrow\infty} f(a q^n).
\end{equation}
If $f$ is continuous at zero, then
\begin{equation*}
  \int_{0}^{a} D_q f(t) d_qt = f(a) -f(0).
\end{equation*}
\section{$q$-integrals  from  Ricatti fragments}
In this section, we extend Conway's result $({\ref{belown}})$   to functions  satisfying  homogenous second-order  $q$-difference equation of the form  $({\ref{mxlk}})$ or $({\ref{mx}})$.
\begin{theorem}\label{hnhhbgdds}
Let    $y(x)$ and  $f(x)$  be  solutions  of  Equations $({\ref{mxlk}})$ and $({\ref{fht}})$ in an open interval $I$, respectively.
Let $u(x)$ be a continuous function on $I$ and   $ h(x) $ be  an arbitrary  function  satisfying
\begin{equation}\label{241887}
  D_q h(x)= u(x) h(x) \quad (x \in I).
\end{equation}
 Then
\begin{align}\label{csw}
 &\int f(x) h(x/q) \Big( \frac{1}{q}   D_{q^{-1}} u(x)+\frac{1}{q} u(x) u(x/q)+ A(x) u(x/q)+r(x)\Big) y(x) d_qx \nonumber\\&=  f(x/q) h(x/q) \Big( y(x/q)u(x/q)  -   D_{q^{-1}}y(x)\Big),
\end{align}
where the functions $p(x)$, $r(x)$ are defined  as in ({\ref{mxlk}})  and
\be\label{A1}
 A(x) =  p(x)-\frac{1}{q}x (1-q)r(x).\ee
\end{theorem}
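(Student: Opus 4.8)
The plan is to obtain $(\ref{csw})$ from the already-established formula $(\ref{mkl})$ by specializing the arbitrary function $h$ there to a solution of $(\ref{241887})$ and then rewriting every term through $u$. Two elementary facts about the backward $q$-derivative carry most of the weight: first, $D_{q^{-1}}g(x)=(D_qg)(x/q)$ for any $g$, which is immediate from the definitions of $D_q$ and $D_{q^{-1}}$; second, the $q^{-1}$-product rule $D_{q^{-1}}(gk)(x)=g(x)D_{q^{-1}}k(x)+k(x/q)D_{q^{-1}}g(x)$, again a direct computation.

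First I would record the consequences of $(\ref{241887})$. Applying $D_{q^{-1}}g(x)=(D_qg)(x/q)$ with $g=h$ gives $D_{q^{-1}}h(x)=u(x/q)h(x/q)$, and reading this same identity backwards through the definition of $D_{q^{-1}}$ produces the shift relation $h(x)=\big(1+(1-q^{-1})x\,u(x/q)\big)h(x/q)$. Next, applying $D_{q^{-1}}$ to both sides of $D_qh(x)=u(x)h(x)$ and using the $q^{-1}$-product rule together with the expression just found for $D_{q^{-1}}h$, I get
\[
\tfrac1q D_{q^{-1}}D_qh(x)=\tfrac1q h(x/q)\big(D_{q^{-1}}u(x)+u(x)u(x/q)\big).
\]

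Now I would substitute these three expressions into the bracket on the left of $(\ref{mkl})$, namely $\frac1q D_{q^{-1}}D_qh(x)+p(x)D_{q^{-1}}h(x)+r(x)h(x)$. Every summand then carries a common factor $h(x/q)$; collecting it and using $1-q^{-1}=-(1-q)/q$ to absorb the contribution of $r(x)h(x)$ into the coefficient of $u(x/q)$ yields exactly $h(x/q)\big(\frac1q D_{q^{-1}}u(x)+\frac1q u(x)u(x/q)+A(x)u(x/q)+r(x)\big)$ with $A(x)=p(x)-\frac1q x(1-q)r(x)$ as in $(\ref{A1})$. Hence the integrand on the left of $(\ref{mkl})$ coincides with the integrand on the left of $(\ref{csw})$. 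For the right-hand side I would simply replace $D_{q^{-1}}h(x)$ by $u(x/q)h(x/q)$ in the right-hand side of $(\ref{mkl})$ and factor out $h(x/q)$, turning $f(x/q)\big(y(x/q)D_{q^{-1}}h(x)-h(x/q)D_{q^{-1}}y(x)\big)$ into $f(x/q)h(x/q)\big(y(x/q)u(x/q)-D_{q^{-1}}y(x)\big)$, which is the right-hand side of $(\ref{csw})$. This completes the argument.

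The computation is essentially routine; the only place that needs genuine care is the bookkeeping of the $q$ versus $q^{-1}$ shifts — in particular the product rule for $D_{q^{-1}}$ and the interchange $h(x)\leftrightarrow h(x/q)$ — and verifying that the term $-\frac{1-q}{q}x\,r(x)u(x/q)$ coming from $r(x)h(x)$ is precisely what upgrades $p(x)$ to $A(x)$. As an alternative one could bypass $(\ref{mkl})$ and prove $(\ref{csw})$ directly by applying $D_q$ to its right-hand side and invoking $(\ref{mxlk})$, $(\ref{fht})$ and $(\ref{241887})$, but routing through $(\ref{mkl})$ is shorter.
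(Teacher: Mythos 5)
Your proposal is correct and follows essentially the same route as the paper: both start from $(\ref{mkl})$, use $D_{q^{-1}}h(x)=u(x/q)h(x/q)$ together with the $q^{-1}$-product rule to rewrite $\frac1q D_{q^{-1}}D_qh$, and use the shift relation $h(x)=h(x/q)+(1-\tfrac1q)xD_{q^{-1}}h(x)$ to convert the $r(x)h(x)$ term into the $A(x)$ correction. The only cosmetic difference is that the paper divides the bracket by $h(x/q)$ before substituting, whereas you factor $h(x/q)$ out afterwards.
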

\begin{proof}
Equation ({\ref{mkl}}) can be written as
\begin{align}\label{cw}
 \int f(x) h(x/q) \Big[ \frac{1}{q} \frac{D_{q^{-1}} D_q h(x)}{h(x/q)}+ p(x) \frac{D_{q^{-1}}h(x)}{h(x/q)}+ \frac{r(x) h(x)}{h(x/q)}\Big]y(x) d_qx \nonumber \\  =
    f(x/q) h(x/q) \Big[y(x/q) \frac{D_{q^{-1}}h(x)}{h(x/q)} - D_{q^{-1}}y(x)\Big].
\end{align}
Then from  $({\ref{241887}})$, we get
\begin{align}\label{2147}
 \frac{D_{q^{-1}}D_q h(x)}{h(x/q)}&= D_{q^{-1}} u(x) +\frac{D_{q^{-1}} h(x)}{h(x/q)} u(x)\\&
 = D_{q^{-1}} u(x) + u(x) u(\frac{x}{q}).
 \end{align}
   Also,
 \begin{align}\label{214}
  r(x) \frac{h(x)}{h(x/q)}
 &= \frac{r(x)}{h(x/q)}\left( h(x/q) +(1- \frac{1}{q} ) xD_{q^{-1}} h(x)\right)\\& =
 r(x) \left(1 +(1- \frac{1}{q} ) x u(\frac{x}{q})\right).
\end{align}
Substituting with   $({\ref{2147}})$ and $({\ref{214}})$ into  $({\ref{cw}})$, we get
  $({\ref{csw}})$ and completes the proof.
\end{proof}
 \begin{theorem}\label{kmjjndhyyt}
Let  $y(x)$ and $F(x)$  be  solutions  of  Equations $({\ref{mx}})$ and $({\ref{frht5}})$ in an open interval $I$, respectively.
Let  $u(x)$ be a continuous function on $I$ and  $ k(x) $ be  an arbitrary function satisfying
\begin{equation}\label{21745887s}
  D_{q^{-1}} k(x)= u(x) k(x) \quad x \in I.
\end{equation}  Then
\begin{align}\label{cswq}
 &\int F(x) k(qx) \Big( D_q u(x)+ u(x) u(qx)+ \tilde{A}(x) u(qx)+r(x)\Big)y(x) d_qx = \nonumber\\&  F(x) k(x) \Big( y(x)u(x)  -   D_{q^{-1}}y(x)\Big),
\end{align}
where the functions $p(x)$, $r(x)$ are defined as in ({\ref{mx}}) and
\be\label{A2}
 \tilde{A}(x) =  p(x)+x (1-q)r(x).\ee
\end{theorem}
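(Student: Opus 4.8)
The plan is to run the proof of Theorem~\ref{hnhhbgdds} once more, only now starting from the companion formula \eqref{mkrl} instead of \eqref{mkl}, with $y$ a solution of \eqref{mx}, $F$ a solution of \eqref{frht5}, and the hypothesis on $k$ read as $D_{q^{-1}}k(x)=u(x)k(x)$. First I would divide and multiply the integrand and the right-hand side of \eqref{mkrl} by $k(qx)$, obtaining
\begin{align*}
 &\int F(x)\,k(qx)\Big[\frac{1}{q}\,\frac{D_{q^{-1}}D_qk(x)}{k(qx)}+p(x)\,\frac{D_qk(x)}{k(qx)}+\frac{r(x)k(x)}{k(qx)}\Big]y(x)\,d_qx \\
 &\qquad = F(x)\,k(qx)\Big[y(x)\,\frac{D_{q^{-1}}k(x)}{k(qx)}-\frac{k(x)}{k(qx)}\,D_{q^{-1}}y(x)\Big].
\end{align*}

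The core of the argument is to simplify the four quotients appearing here using \eqref{21745887s}. From $D_{q^{-1}}k(x)=u(x)k(x)$ together with the elementary identity $D_{q^{-1}}g(x)=(D_qg)(x/q)$, replacing $x$ by $qx$ gives $D_qk(x)=u(qx)k(qx)$, so $D_qk(x)/k(qx)=u(qx)$. Next, writing $\tfrac1qD_{q^{-1}}D_q=D_qD_{q^{-1}}$ (a direct consequence of the definitions) we get $\tfrac1qD_{q^{-1}}D_qk(x)=D_q(uk)(x)$, and the $q$-Leibniz rule $D_q(uk)(x)=u(x)D_qk(x)+k(qx)D_qu(x)$ then yields $\tfrac1qD_{q^{-1}}D_qk(x)/k(qx)=D_qu(x)+u(x)u(qx)$. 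Finally, from $k(x)=k(qx)+(1-q)x\,D_qk(x)$ we obtain $k(x)/k(qx)=1+(1-q)x\,u(qx)$, and hence $D_{q^{-1}}k(x)/k(qx)=u(x)\big(1+(1-q)x\,u(qx)\big)$.

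Substituting these four evaluations back, the bracket on the left of the displayed identity becomes $D_qu(x)+u(x)u(qx)+p(x)u(qx)+r(x)\big(1+(1-q)x\,u(qx)\big)$; collecting the terms multiplying $u(qx)$ produces the coefficient $p(x)+(1-q)x\,r(x)=\tilde A(x)$ of \eqref{A2}, which is exactly the integrand of \eqref{cswq}. On the right, both surviving terms carry the common factor $k(qx)\big(1+(1-q)x\,u(qx)\big)$, which equals $k(x)$, so the right-hand side collapses to $F(x)k(x)\big(y(x)u(x)-D_{q^{-1}}y(x)\big)$, and \eqref{cswq} follows.

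The only point needing genuine care is the bookkeeping of the dilations: since the roles of $D_q$ and $D_{q^{-1}}$ are swapped relative to Theorem~\ref{hnhhbgdds}, the shift $x\mapsto x/q$ there is replaced throughout by $x\mapsto qx$ here, so one must be attentive when passing from $D_{q^{-1}}k=uk$ to the expressions for $D_qk$ and $D_{q^{-1}}D_qk$, and when recognising that $k(qx)\big(1+(1-q)x\,u(qx)\big)$ simplifies to $k(x)$. Everything else is routine $q$-algebra, entirely parallel to the proof of Theorem~\ref{hnhhbgdds}.
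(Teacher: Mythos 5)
Your argument is correct and is precisely the computation the paper intends: the authors omit this proof with the remark that it "follows similarly" to Theorem~\ref{hnhhbgdds}, and your proposal carries out that parallel derivation starting from \eqref{mkrl}, dividing by $k(qx)$, and evaluating the quotients via \eqref{21745887s} exactly as the paper does with $h(x/q)$ in the first theorem. The four quotient identities, the commutation $\tfrac1qD_{q^{-1}}D_q=D_qD_{q^{-1}}$, and the final collection of the $u(qx)$ terms into $\tilde A(x)$ all check out.
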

\begin{proof}
The proof follows similarly as the proof of Theorem ${\ref{hnhhbgdds}}$ and is omitted.\\
\end{proof}
Consider the $q$-Ricatti equation
\begin{equation}\label{yu}
 \frac{1}{q}   D_{q^{-1}} u(x)+\frac{1}{q} u(x) u(x/q)+ A(x) u(x/q)+r(x)= 0,
\end{equation}
and
\begin{equation}\label{yus}
    D_q u(x)+ u(x) u(qx)+ \tilde{A}(x) u(qx)+r(x)= 0,
\end{equation}
 where $ A(x)$ and $ \tilde{A}(x)$ are  defined as in  $({\ref{A1}})$ and  $({\ref{A2}})$, respectively.
  We can prove that Equation $({\ref{yu}})$ ($({\ref{yus}})$) is equivalent to Equation $({\ref{mxlk}})$ ($({\ref{mx}})$) by setting $  \frac{D_q y(x)}{y(x)}= u(x) $ \Big($  \frac{D_{q^{-1}} y(x)}{y(x)}= u(x) $\Big), respectively.
  \vskip 0.5cm The $q$-integrals presented in the sequel are obtained by choosing the  function $ u(x) $ to be a solution of a fragment of the  $q$-Ricatti equations $({\ref{yu}})$ or $({\ref{yus}})$.
  \vskip0.5cm
\begin{theorem}\label{hermith}
Let $n \in \mathbb{N}$ and  $c$ be a real number.
If $ h_n(x;q)$ is the discrete $q$-Hermite I polynomial of degree $n$ which is defined in \eqref{hn1}, then
  \begin{align}\label{her1}
  & \int (q^2x^2;q^2)_\infty \left( (cq+x)[n]_q-x\right) h_n(x;q) d_qx=\nonumber\\&q^{n-1}(1-q) (x^2;q^2)_\infty\left( q h_n(\frac{x}{q};q)- [n]_q(cq+x) h_{n-1}(\frac{x}{q};q)\right),
  \end{align}
  \begin{align}\label{her112}
  & \int x (q^2x^2;q^2)_\infty  h_n(x;q) d_qx= \frac{q^{n-1}(x^2;q^2)_\infty}{[n-1]_q}\left( (1-q) h_n(\frac{x}{q};q)- \frac{1-q^n}{q}x h_{n-1}(\frac{x}{q};q)\right),
  \end{align}
  \begin{align}\label{her2}
    &\int \frac{(q^2x^2;q^2)_\infty}{(q^{-(n+1)}x^2;q^2)_\infty}  h_n(x;q) d_qx= \nonumber\\& \frac{ (x^2;q^2)_\infty}{[n+1]_q(q^{-(n+1)}x^2;q^2)_\infty}\left(  x h_n(\frac{x}{q};q)- q^{n} (1-q^n)  h_{n-1}(\frac{x}{q};q)\right),
  \end{align}
  and
 \begin{align}\label{her3}
    \int x^{n-2}(q^2x^2;q^2)_\infty  h_n(x;q) d_qx= \frac{x^n(x^2;q^2)_\infty}{[n-1]_q}\left(\frac{ h_n(\frac{x}{q};q)}{x}-\frac{1}{q}h_{n-1}(\frac{x}{q};q) \right).
  \end{align}
\end{theorem}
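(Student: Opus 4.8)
The plan is to obtain each of the four $q$-integrals by specializing Theorem~\ref{hermith}'s governing apparatus --- namely the general identity \eqref{csw} --- to the second-order $q$-difference equation satisfied by the discrete $q$-Hermite I polynomials, and then choosing $u(x)$ to be a suitable fragment solution of the associated $q$-Riccati equation \eqref{yu}. First I would recall the $q$-difference equation for $h_n(x;q)$ from the appendix (equation~\eqref{hn1}), rewrite it in the normalized form \eqref{mxlk}, and read off the coefficient functions $p(x)$ and $r(x)$; from these, $A(x)$ is computed via \eqref{A1}. The weight-type factor $f(x)$ solving \eqref{fht} will turn out to be (a multiple of) $(q^2x^2;q^2)_\infty$ --- this explains the appearance of that infinite product on the left-hand sides and of $(x^2;q^2)_\infty$ on the right-hand sides after the shift $x\mapsto x/q$.

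Next, for each of the four formulas I would exhibit the correct choice of $u(x)$ (equivalently of $h(x)$ via \eqref{241887}). For \eqref{her1} the natural choice is a Bernoulli-type fragment of \eqref{yu} producing the factor $(cq+x)[n]_q - x$ inside the integrand, with $h(x)$ an elementary function of the form making $h(x/q)/h(x)$ algebraic; the appearance of the free parameter $c$ reflects the one-parameter family of solutions of the homogeneous fragment. For \eqref{her112}, \eqref{her2}, and \eqref{her3} I would take $u(x)$ of the simple algebraic shape $u(x)=a/x + b$ (the Section~5 ansatz), with $(a,b)$ tuned so that the bracketed operator in \eqref{csw} collapses to $x$, to $(q^2x^2;q^2)_\infty/(q^{-(n+1)}x^2;q^2)_\infty$ times a constant, and to $x^{n-2}$ respectively. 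In each case, once $u$ is fixed, the left side of \eqref{csw} is exactly the claimed integrand (up to the overall constant absorbed into $f$), and the right side is $f(x/q)h(x/q)\big(y(x/q)u(x/q) - D_{q^{-1}}y(x)\big)$, which I would simplify using $y(x)=h_n(x;q)$, the three-term contiguous relation expressing $D_{q^{-1}}h_n(x;q)$ (or $D_q h_n$) in terms of $h_{n-1}(x;q)$, and the product identities $(q^2x^2;q^2)_\infty = (1-q^2x^2)\cdots$ to convert between arguments $x$ and $x/q$.

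The main obstacle I anticipate is purely computational rather than conceptual: verifying that with the chosen $u(x)$ the coefficient $\tfrac1q D_{q^{-1}}u(x) + \tfrac1q u(x)u(x/q) + A(x)u(x/q) + r(x)$ simplifies to precisely the stated polynomial or rational prefactor. This requires careful bookkeeping of the $q$-shifts in $u(x/q)$ and $D_{q^{-1}}u(x)$ against the explicit $p(x),r(x)$ for $h_n$, and the algebra is delicate because several terms with $[n]_q$ and powers of $q$ must cancel. A secondary point requiring care is the passage from $D_{q^{-1}}h_n(x;q)$ to $h_{n-1}$: I would use the known lowering relation for discrete $q$-Hermite I polynomials (of the form $D_q h_n(x;q) = [n]_q h_{n-1}(x;q)$ up to a $q$-power, together with the reflection $D_{q^{-1}}f(x) = D_q f(x/q)\cdot$(constant)) to rewrite the right-hand side, and then match the normalization constants on both sides. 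Once these two verifications are in place for each of the four choices of $u$, the identities \eqref{her1}--\eqref{her3} follow immediately from Theorem~\ref{hnhhbgdds}.
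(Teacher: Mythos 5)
Your framework is the paper's own: specialize Theorem \ref{hnhhbgdds} to the $q$-difference equation \eqref{kjdsmhgb}, read off $p(x)=-x/(1-q)$ and $r(x)=q^{1-n}[n]_q/(1-q)$, take $f(x)=(q^2x^2;q^2)_\infty$, and choose $u$ as a fragment solution of \eqref{yu}. Your plan for \eqref{her1} is exactly what the paper does (the fragment $D_{q^{-1}}u(x)+u(x)u(x/q)=0$ gives $u(x)=1/(x+c)$ and $h(x)=1+x/c$, and the surviving terms $A(x)u(x/q)+r(x)$ produce the factor $(cq+x)[n]_q-x$ up to a constant), and for \eqref{her112} and \eqref{her3} your ansatz $u(x)=a/x+b$ with $b=0$ and $a=1$, respectively $a=[n]_q$, coincides with the paper's choices (indeed Section~5 rederives exactly these two integrals from that ansatz).

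The gap is \eqref{her2}. The factor $1/(q^{-(n+1)}x^2;q^2)_\infty$ in that integrand can only be $h(x/q)$: it cannot come from the bracket $S_q(x)$, which for $u=a/x+b$ is a Laurent polynomial in $x$ (cf. \eqref{njuffffj1}), and $f(x)$ is already fixed as $(q^2x^2;q^2)_\infty$. But $h(x)=1/(q^{1-n}x^2;q^2)_\infty$ forces $u(x)=D_qh(x)/h(x)=\frac{q^{1-n}}{1-q}\,x$, which is linear in $x$ and lies outside your family $a/x+b$; with $u=a/x+b$ one has $h(qx)/h(x)=1-(1-q)(a+bx)$, which is affine in $x$ and can never equal the quadratic $1-q^{1-n}x^2$ required here. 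The repair is to use the other Bernoulli-type fragment $\frac1q u(x)u(x/q)+A(x)u(x/q)=0$, i.e. $u(x)=-qA(x)=\frac{q^{1-n}}{1-q}x$ with $A$ as in \eqref{A1}; then $S_q(x)=\frac1q D_{q^{-1}}u(x)+r(x)=\frac{q^{-n}[n+1]_q}{1-q}$ is constant, which both yields the integrand of \eqref{her2} and accounts for the $1/[n+1]_q$ on its right-hand side. Your remaining steps (the lowering relation $D_{q^{-1}}h_n(x;q)=[n]_q h_{n-1}(x/q;q)$ and the constant bookkeeping) are as in the paper and present no difficulty.
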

\begin{proof}
The discrete $q$-Hermite I polynomial of degree $n$ is defined in \eqref{hn1} and  satisfies the second order $q$-difference equation \eqref{kjdsmhgb}.
  By comparing   \eqref{kjdsmhgb} with  (${\ref{mxlk}}$), we get
  \begin{equation}\label{needh1}
    p(x)= - \dfrac{x}{1-q}, \quad \quad r(x) =   \frac{ q^{1-n}[n]_q}{1-q}.
  \end{equation}
Then  \be \label{needh2} f(x) =   (q^2x^2;q^2)_\infty \ee is a solution of    (${\ref{fht}}$).
  Therefore Equation $({\ref{csw}})$ becomes
\begin{align}\label{kkk4}
 &\int (q^2x^2;q^2)_\infty h(x/q) \Big( \frac{1}{q} D_{q^{-1}} u(x)+\frac{1}{q} u(x) u(x/q)-\frac{q^{-n}x}{1-q} u(x/q)+ \frac{q^{1-n}[n]_q}{(1-q)}\Big)y(x) d_qx \nonumber\\&=
  (x^2;q^2)_\infty  h(x/q)\Big ( y(x/q)u(x/q)  -   D_{q^{-1}}y(x)\Big).
\end{align}
By taking the fragment
\begin{equation}\label{jnhhygee}
  D_{q^{-1}} u(x) + u(x) u(x/q) =0,
\end{equation}
we get
\be \label{all} u(x) =\frac{1}{x+c}. \ee
Hence \be\label{all2}
h(x) =\left\{\begin{array}{lc}
  1+\frac{x}{c}, & \text{if $c \neq 0$;} \\\\
  x, & \text{ if $c=0$}
\end{array}\right.
 \ee
 is a solution of    $({\ref{241887}})$.
Substituting with $h(x)=1+\frac{x}{c}$ into   $({\ref{kkk4}})$ and using
\begin{equation}\label{4666vgf}
  D_{q^{-1}} h_n(x;q)= [n]_q h_{n-1}(\frac{x}{q};q),
\end{equation} see \cite[Eq.(3.28.7)]{askey}, we get  $({\ref{her1}})$.
Substituting with $h(x)=x$ into   $({\ref{kkk4}})$ and using  $({\ref{4666vgf}})$ we get  $({\ref{her112}})$.
To prove $({\ref{her2}})$, we consider the fragment
\begin{equation*}
  \frac{1}{q} u(x) u(x/q)-\frac{q^{-n}x}{1-q} u(x/q) =0,
\end{equation*}
 then  $ u(x) = \frac{q^{1-n}}{1-q}x$ and
    $ h(x) =\dfrac{1}{( q^{1-n}x^2;q^2)_ \infty}$ is a solution of   $({\ref{241887}})$. Substituting with $h(x)$ and $u(x)$ into   $({\ref{kkk4}})$ and using $({\ref{4666vgf}})$,  we get $({\ref{her2}})$.
Finally, the proof of  $({\ref{her3}})$ follows by taking the fragment
\begin{equation*}
  -\frac{q^{-n}x}{1-q} u(x/q)+ \frac{q^{1-n}[n]_q}{(1-q)}=0.
\end{equation*}
In this case,  $ u(x) = \frac{[n]_q}{x}$ and $ h(x) =  x^n$ is a solution of    $({\ref{241887}})$. Substituting with $h(x)$ and $u(x)$ into   $({\ref{kkk4}})$ and using $({\ref{4666vgf}})$,  we get $({\ref{her3}})$.
\end{proof}
\begin{theorem}\label{59}
Let $n \in \mathbb{N}$ and  $c$ be a real number.
If $\widetilde{h}_n(x;q)$ is the discrete $q$-Hermite II polynomial of degree $n$ which is defined in \eqref{hn2}, then
  \begin{align}\label{her21}
   &\int \frac{1}{(-x^2;q^2)_\infty} \left( qx[n-1]_q+c[n]_q\right)\widetilde{h}_n(x;q) d_qx= \nonumber\\& \frac{1-q}{(-x^2;q^2)_\infty}\left(  \widetilde{h}_n(x;q)- q^{1-n} [n]_q (c+x) \widetilde{h}_{n-1}(x;q)\right),
  \end{align}
   \begin{align}\label{her211}
   &\int \frac{x}{(-x^2;q^2)_\infty} \widetilde{h}_n(x;q) d_qx=  \frac{1-q}{[n-1]_q(-x^2;q^2)_\infty}\left(  \frac{1}{q}\widetilde{h}_n(x;q)- q^{-n} [n]_q x \widetilde{h}_{n-1}(x;q)\right),
  \end{align}
  \begin{align}\label{her22}
  & \int \frac{(-q^{n+3}x^2;q^2)_\infty}{(-x^2;q^2)_\infty}  \widetilde{h}_n(x;q) d_qx= \nonumber\\& \frac{(-q^{n+1}x^2;q^2)_\infty}{[n+1]_q(-x^2;q^2)_\infty}\left( q^n x \widetilde{h}_n(x;q)-  q^{1-n}(1-q^n)\widetilde{h}_{n-1}(x;q)\right),
  \end{align}
  and
  \begin{align}\label{her23}
    &\int \frac{x^{n-2}}{(-x^2;q^2)_\infty}  \widetilde{h}_n(x;q) d_qx= \frac{ x^n}{[n-1]_q(-x^2;q^2)_\infty}\left( \frac{ \widetilde{h}_n(x;q)}{x}-  \widetilde{h}_{n-1}(x;q)\right).
  \end{align}
\end{theorem}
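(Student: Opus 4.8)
The plan is to repeat, \emph{mutatis mutandis}, the argument used for Theorem~\ref{hermith}, but built on Theorem~\ref{kmjjndhyyt} rather than Theorem~\ref{hnhhbgdds}, since the discrete $q$-Hermite II polynomial $\widetilde h_n(x;q)$ of \eqref{hn2} satisfies a homogeneous second-order $q$-difference equation of the type \eqref{mx}. Matching that equation against \eqref{mx} gives $p(x)=-\frac{x}{1-q}$ and $r(x)=\frac{[n]_q}{1-q}$; hence $F(x)=\frac{1}{(-x^{2};q^{2})_{\infty}}$ solves \eqref{frht5}, and by \eqref{A2},
\begin{equation*}
\tilde A(x)=p(x)+x(1-q)r(x)=-\frac{q^{n}x}{1-q}.
\end{equation*}
Substituting these into \eqref{cswq} with $y(x)=\widetilde h_n(x;q)$ produces the master identity
\begin{align*}
&\int \frac{k(qx)}{(-x^{2};q^{2})_{\infty}}\Big( D_q u(x)+u(x)u(qx)-\tfrac{q^{n}x}{1-q}\,u(qx)+\tfrac{[n]_q}{1-q}\Big)\widetilde h_n(x;q)\,d_qx\\
&\qquad=\frac{k(x)}{(-x^{2};q^{2})_{\infty}}\Big(u(x)\widetilde h_n(x;q)-D_{q^{-1}}\widetilde h_n(x;q)\Big),
\end{align*}
valid for any $u$ continuous on the relevant interval and any $k$ satisfying \eqref{21745887s}; into the right-hand side one then inserts the lowering relation $D_{q^{-1}}\widetilde h_n(x;q)=q^{1-n}[n]_q\,\widetilde h_{n-1}(x;q)$ (the $D_{q^{-1}}$-analogue of \eqref{4666vgf}; see \cite{askey}).

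Each of the four formulas then comes from killing one fragment of the $q$-Riccati equation \eqref{yus}. For \eqref{her21}, I would take the Bernoulli fragment $D_q u(x)+u(x)u(qx)=0$, whose relevant solution is $u(x)=\frac{1}{x+c}$ (a short direct check), with companion $k(x)=x+c$ (equivalently $1+\frac{x}{c}$ when $c\neq 0$) solving \eqref{21745887s}; the surviving terms $\tilde A(x)u(qx)+r(x)$ collapse, after clearing $qx+c$, to $\frac{qx[n-1]_q+c[n]_q}{(1-q)(qx+c)}$, and the factor $k(qx)=qx+c$ cancels the denominator, so after dividing by the constant $\frac{1}{1-q}$ the master identity becomes \eqref{her21}. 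Formula \eqref{her211} is the specialization $c=0$, i.e.\ $u(x)=\frac1x$, $k(x)=x$. For \eqref{her22}, I would take the fragment $u(x)u(qx)+\tilde A(x)u(qx)=0$, giving $u(x)=-\tilde A(x)=\frac{q^{n}x}{1-q}$, for which $k(x)=(-q^{n+1}x^{2};q^{2})_{\infty}$ solves \eqref{21745887s} and $k(qx)=(-q^{n+3}x^{2};q^{2})_{\infty}$; the residue $D_q u(x)+r(x)=\frac{[n+1]_q}{1-q}$ is constant, so dividing by it yields \eqref{her22}. For \eqref{her23}, I would take the fragment $\tilde A(x)u(qx)+r(x)=0$, giving $u(x)=\frac{q^{1-n}[n]_q}{x}$ and $k(x)=x^{n}$; here the residue $D_q u(x)+u(x)u(qx)$ simplifies to $\frac{q^{1-2n}[n]_q[n-1]_q}{x^{2}}$, and dividing by it (and using $k(qx)=q^{n}x^{n}$) produces \eqref{her23}.

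I expect the main work to lie in two routine-but-unforgiving bookkeeping tasks rather than in any conceptual difficulty. First, checking that each candidate pair $(u,k)$ really does satisfy its fragment \emph{and} the normalization $D_{q^{-1}}k=uk$ of \eqref{21745887s}: this differs from the $D_q$-normalization used in Theorem~\ref{hermith}, which is precisely why, for instance, $u(x)=q^{1-n}[n]_q/x$ pairs with $k(x)=x^n$ here, whereas $u(x)=[n]_q/x$ paired with $h(x)=x^n$ in the Hermite~I case, and why the exponents inside the infinite products in \eqref{her22} differ from those in \eqref{her2}. Second, tracking, formula by formula, exactly which constant (or constant times $x^{-2}$) is factored out of the master identity, since the stated results carry the extra normalizing factors $\frac1q$, $\frac{1}{[n-1]_q}$, $\frac{1}{[n+1]_q}$ visible in \eqref{her211}, \eqref{her22}, \eqref{her23}. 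No convergence issue should arise beyond what is already implicit in Jackson's $q$-integral, since every integrand is an entire-type infinite product times a polynomial.
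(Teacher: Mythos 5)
Your proposal matches the paper's proof essentially step for step: the same identification $p(x)=-\frac{x}{1-q}$, $r(x)=\frac{[n]_q}{1-q}$, $F(x)=\frac{1}{(-x^2;q^2)_\infty}$, the same master identity from Theorem~\ref{kmjjndhyyt}, and the same three fragments of \eqref{yus} with the same pairs $u(x)=\frac{1}{x+c}$ with $k(x)=x+c$ (resp.\ $x$), $u(x)=\frac{q^n x}{1-q}$ with $k(x)=(-q^{n+1}x^2;q^2)_\infty$, and $u(x)=\frac{q^{1-n}[n]_q}{x}$ with $k(x)=x^n$, combined with $D_{q^{-1}}\widetilde h_n(x;q)=q^{1-n}[n]_q\widetilde h_{n-1}(x;q)$. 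The only cosmetic difference is your normalization $k(x)=x+c$ versus the paper's $1+\frac{x}{c}$, which is immaterial since the identity is linear in $k$.
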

\begin{proof}
The discrete $q$-Hermite II polynomial of degree $n$ is defined in \eqref{hn2} and  satisfies the second order $q$-difference equation \eqref{kjdsmhg}.
  By comparing   \eqref{kjdsmhg} with  (${\ref{mx}}$), we get
  \begin{equation*}
    p(x)= - \dfrac{x}{1-q},\quad \quad r(x) = \frac{ [n]_q}{1-q} .
  \end{equation*}
Then   $ F(x) =   \frac{1}{(-x^2;q^2)_\infty}$  is a solution of  (${\ref{frht5}}$),
  and  $({\ref{cswq}})$ becomes
\begin{align}\label{khki5}
& \int  \frac{k(qx)}{( -x^2;q^2)_\infty} \Big(   D_q u(x)+ u(x) u(qx)-\frac{q^n x}{(1-q)}u(qx)+\frac{[n]_q}{(1-q)}\Big)y(x) d_qx =\nonumber\\&
    \frac{k(x)}{( -x^2;q^2)_\infty}\Big ( y(x)u(x)  -   D_{q^{-1}}y(x)\Big).
\end{align}
Consider the fragment
\begin{equation}\label{jnnyqqse}
  D_q u(x)+ u(x) u(qx) =0.
\end{equation}
 Hence, \be \label{all4} u(x) =\frac{1}{x+c} \ee and
  \begin{equation}\label{all3}
k(x) =\left\{\begin{array}{lc}
  1+\frac{x}{c}, & \text{if $c \neq 0$;} \\\\
  x, & \text{ if $c=0$}
\end{array}\right.
\end{equation}
is a  solution of  $({\ref{21745887s}})$.  Substituting  with $u(x)$ and $k(x)=1+\frac{x}{c} $ into   $({\ref{khki5}})$, and using \cite[Eq.(3.29.7)]{askey}
  (with $x$ is replaced by $\frac{x}{q}$ )
\begin{equation}\label{4666vgfh}
  D_{q^{-1}}\widetilde{h}_n(x;q)=q^{1-n} [n]_q \widetilde{h}_{n-1}(x;q),
\end{equation}  we get  $({\ref{her21}})$.
Substituting  with $u(x)$ and $k(x)=x $ into   $({\ref{khki5}})$, and using $({\ref{4666vgfh}})$ we get  $({\ref{her211}})$.
 Also, by taking the fragment
\begin{equation*}
  u(x) u(qx)-\frac{q^n x}{1-q}u(qx) =0,
\end{equation*}
which has the solution  $u(x) =\frac{q^n }{1-q}x$.  Hence $ k(x) =(-q^{n+1}x^2;q^2)_\infty $  is a solution of  ({\ref{21745887s}}). Substituting  with $k(x)$ into   $({\ref{khki5}})$,  we get   $({\ref{her22}})$.
 Similarly, to prove  $({\ref{her23}})$,   we consider the fragment
 \begin{equation*}
   -\frac{q^n x}{1-q}u(qx)+\frac{[n]_q}{1-q}=0,
 \end{equation*}
  then we obtain $u(x) =q^{1-n}[n]_q\dfrac{1}{x}$ and  $k(x) =x^n$. Substituting  with $u(x)$ and $k(x)$
  into Equation  $({\ref{khki5}})$,  we get   $({\ref{her23}})$.
\end{proof}
\begin{theorem}\label{5h9}
Let $\nu$, $c$ and $q$ be real numbers such that $\nu > 1$.  Let $\alpha \in \mathbb{R}$ be such that $\alpha=\frac{\ln (q^{\nu}+q^{-\nu}-q^{-1})}{\ln q}$. Then
\begin{align}\label{dodovo}
   &\int \dfrac{x}{(- x^2  (1-q)^2;q^2)_\infty} \left(\frac{q^{-1}-q^{-\nu}[\nu]_q^2}{x}-\frac{cq^{-\nu}[\nu]^2}{x^2}+ c+qx \right) J_\nu^{(2)}(  x |q^2) d_qx = \nonumber\\& \frac{x}{q(- x^2  (1-q)^2;q^2)_\infty} \left(  J_\nu^{(2)}(  x |q^2)-(c+x) D_{q^{-1}}  J_\nu^{(2)}( x |q^2) \right),
\end{align}
\begin{align}\label{dodovo1}
   &\int \dfrac{x}{(- x^2  (1-q)^2;q^2)_\infty} \left(\frac{q^{-1}-q^{-\nu}[\nu]_q^2}{x}+qx \right) J_\nu^{(2)}(  x |q^2) d_qx = \nonumber\\& \frac{x}{q(- x^2  (1-q)^2;q^2)_\infty} \left(  J_\nu^{(2)}(  x |q^2)-x D_{q^{-1}}  J_\nu^{(2)}( x |q^2) \right),
\end{align}
  \begin{align}\label{herbhy22}
   &\int \dfrac{x^{\alpha+1}}{(- x^2  (1-q)^2;q^2)_\infty} \left(q+\frac{1-q^{1-\nu}[\nu]^2}{qx^2} \right) J_\nu^{(2)}(  x |q^2) d_qx = \nonumber\\& \frac{x^{\alpha+1}}{q^{\alpha}(- x^2  (1-q)^2;q^2)_\infty} \left( \frac{q^{1-\nu}(1-q)[\nu]^2-1}{x} J_\nu^{(2)}(  x |q^2)- D_{q^{-1}}  J_\nu^{(2)}( x |q^2) \right).
  \end{align}
\end{theorem}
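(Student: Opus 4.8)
The strategy mirrors that of Theorems~\ref{hermith} and~\ref{59}: here we take $y(x)=J_\nu^{(2)}(x|q^2)$ and apply Theorem~\ref{kmjjndhyyt}. First I would invoke the three-term $q$-difference equation satisfied by the Jackson function $J_\nu^{(2)}(x|q^2)$ (recorded in the appendix). Although this function has base $q^2$, its defining equation links the values $y(x/q)$, $y(x)$, $y(qx)$, so it can be rearranged into the canonical form \eqref{mx}, namely $\tfrac1q D_{q^{-1}}D_q y + p(x)D_q y + r(x)y=0$, from which one reads off explicit rational $p(x)$ and $r(x)$ depending on $q$ and $\nu$ (with $q^{\pm\nu}$ entering $r(x)$, which is what eventually forces the value of $\alpha$). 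I would then solve the companion first-order equation \eqref{frht5}, $D_q F = p(x)F$; the solution is $F(x)=\tfrac{x}{q\,(-x^2(1-q)^2;q^2)_\infty}$, verified directly from the fact that $D_q$ sends $1/(-x^2(1-q)^2;q^2)_\infty$ to $-x(1-q)$ times itself, together with the $q$-product rule. With this $y$ and $F$, Theorem~\ref{kmjjndhyyt} specializes \eqref{cswq} to an identity whose integrand is $F(x)k(qx)\big(D_q u(x)+u(x)u(qx)+\tilde A(x)u(qx)+r(x)\big)J_\nu^{(2)}(x|q^2)$, with $\tilde A(x)=p(x)+x(1-q)r(x)$.

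I would then feed suitable fragments of the $q$-Riccati equation \eqref{yus} into this identity. For \eqref{dodovo} and \eqref{dodovo1} I take the Bernoulli fragment $D_q u(x)+u(x)u(qx)=0$, whose solution, exactly as in \eqref{all4}--\eqref{all3}, is $u(x)=1/(x+c)$ with companion factor $k(x)=x+c$. Then $k(qx)u(qx)=1$, so the integrand collapses to $F(x)\big(\tilde A(x)+r(x)(qx+c)\big)J_\nu^{(2)}(x|q^2)$ while the right-hand side of \eqref{cswq} becomes $F(x)\big(J_\nu^{(2)}(x|q^2)-(c+x)D_{q^{-1}}J_\nu^{(2)}(x|q^2)\big)$; substituting the explicit $p$, $r$, $F$ gives \eqref{dodovo}, and the case $c=0$ (so $u(x)=1/x$, $k(x)=x$) gives \eqref{dodovo1}.

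For \eqref{herbhy22} I instead drop the derivative/Bernoulli term and retain only the remaining terms of \eqref{yus}; this forces $u(x)$ to have the form $\mathrm{const}/x$. Requiring that the companion factor in \eqref{21745887s} be a pure monomial $k(x)=x^{\alpha+1}$ pins that constant down to $(1-q^{-(\alpha+1)})/(1-q^{-1})$, and equating it with the constant dictated by the $q$-Bessel coefficients $p$, $r$ is precisely the condition $q^{\alpha}=q^{\nu}+q^{-\nu}-q^{-1}$ in the hypothesis. Plugging this $u$ and $k$ into \eqref{cswq} and simplifying the surviving terms $D_q u(x)+u(x)u(qx)+r(x)$ then yields \eqref{herbhy22}.

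The main obstacle is the opening step: bringing the base-$q^2$ Bessel equation into the base-$q$ shape \eqref{mx} with the correct $p(x)$ and $r(x)$, and checking that the displayed $F$ solves \eqref{frht5}. Once those coefficients are fixed, everything else is routine $q$-algebra, the only slightly delicate point being the bookkeeping in the third fragment that reproduces the value of $\alpha$.
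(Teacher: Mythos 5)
Your treatment of \eqref{dodovo} and \eqref{dodovo1} matches the paper's proof: read $p(x)=\frac{1}{x}-q(1-q)x$ and $r(x)=q-q^{1-\nu}[\nu]_q^2/x^2$ off the appendix equation \eqref{mnbvxx} (which is already in the form \eqref{mx}, so the ``base-$q^2$'' worry you flag as the main obstacle is moot), take $F(x)=x/(-x^2(1-q)^2;q^2)_\infty$ as a solution of \eqref{frht5}, and use the Bernoulli fragment $D_qu(x)+u(x)u(qx)=0$ with $u(x)=1/(x+c)$; your $k(x)=x+c$ is the paper's $1+x/c$ up to a harmless constant, and the collapse $k(qx)u(qx)=1$ is exactly how the paper's specialization \eqref{khki5rm} is evaluated.

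The third identity is where your sketch goes wrong. You say you ``drop the derivative/Bernoulli term and retain only the remaining terms,'' i.e.\ you take the fragment $\tilde A(x)u(qx)+r(x)=0$; but with $\tilde A(x)=\bigl(1-q^{1-\nu}(1-q)[\nu]_q^2\bigr)/x$ and $r(x)=q-q^{1-\nu}[\nu]_q^2/x^2$ this gives $u(qx)=-r(x)/\tilde A(x)$, which is a linear term plus a $1/x$ term, not $\mathrm{const}/x$, so no monomial $k$ solves \eqref{21745887s} and the derivation stalls. (Your closing sentence, which lists the surviving integrand as $D_qu(x)+u(x)u(qx)+r(x)$, would instead correspond to killing only $\tilde A(x)u(qx)$, which forces $u\equiv 0$.) The fragment that actually works --- and the one the paper uses --- is $u(x)u(qx)+\tilde A(x)u(qx)=0$, i.e.\ $u(x)=-\tilde A(x)=\frac{q^{1-\nu}+q^{1+\nu}-(1+q)}{(1-q)x}$, which leaves $D_qu(x)+r(x)$ as the surviving part of the integrand; the monomial solving \eqref{21745887s} is then $k(x)=x^{\alpha}$ (not $x^{\alpha+1}$ --- the extra power of $x$ in \eqref{herbhy22} comes from $F$), and matching its exponent against the constant in $u$ is what produces the hypothesis on $\alpha$. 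So your mechanism for where $\alpha$ comes from is correct in spirit, but the fragment you chose would not deliver \eqref{herbhy22}.
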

\begin{proof}
  By comparing   \eqref{mnbvxx} with  (${\ref{mx}}$),  we obtain
\begin{equation*}
  p(x) = \frac{1}{x}-q(1-q)x,\quad  r(x) =  q- q^{1-v} [v]^2_q \frac{1}{x^2}.
\end{equation*}
Then $  F(x) = \dfrac{x}{(- x^2  (1-q)^2;q^2)_\infty} $
 is a solution of  (${\ref{frht5}}$),
  and  $({\ref{cswq}})$ becomes
\begin{align}\label{khki5rm}
& \int  \frac{xk(qx)y(x)}{( -x^2(1-q)^2;q^2)_\infty} \Big(   D_q u(x)+ u(x) u(qx)+\frac{1-q^{1-\nu}(1-q)[\nu]^2 }{x}u(qx)+\frac{qx^2-q^{1-\nu}[\nu]^2}{x^2}\Big) d_qx\nonumber\\& =
    \frac{xk(x)}{( -x^2(1-q)^2;q^2)_\infty}\Big ( y(x)u(x)  -   D_{q^{-1}}y(x)\Big).
\end{align}
Considering the fragment  $({\ref{jnnyqqse}})$,   we get $u(x)$ and $k(x)$ as in  $({\ref{all4}})$ and $({\ref{all3}})$, respectively. Substituting  with $u(x)$ and $k(x)=1+\frac{x}{c} $ into  $({\ref{khki5rm}})$
   to obtain  $({\ref{dodovo}})$. Substituting  with $u(x)$ and $k(x)=x$ into  $({\ref{khki5rm}})$
   we obtain  $({\ref{dodovo1}})$.
   To  prove $(\ref{herbhy22})$,
 consider the fragment
\begin{equation*}
  u(x) u(qx)+\frac{1-q^{1-\nu}(1-q)[\nu]^2 }{x}u(qx)=0,
\end{equation*}
which implies that  $u(x) =\frac{q^{1-\nu}+q^{1+\nu}-(1+q)}{(1-q)x}$ and then $ k(x) =x^{\alpha} $  is a solution of   $({\ref{21745887s}})$. Substituting  with $u(x)$ and $k(x)$ into   $({\ref{khki5rm}})$ to obtain $(\ref{herbhy22})$.
\end{proof}
\begin{theorem}\label{kmjuqqqq}
Let  $c$ be a real number.
    If $ Ai_q(x)$ is the $q$-Airy function which is defined in \eqref{ai1}, then
    \begin{align}\label{omnnk}
    & \sum_{n=0}^{\infty} (-1)^n q^n \left(c-1+q^2+q^n x \right)Ai_q(q^n x)\nonumber\\&= \frac{qc+x}{q(1+q)}\,_1\phi_1 (0;-q^2;q,-x)-(1-q)Ai_q(\frac{x}{q}),
    \end{align}
     \begin{align}\label{omnnk1}
    & \sum_{n=0}^{\infty} (-1)^n q^n \left(1-q^2-q^n x \right)Ai_q(q^n x)\nonumber\\&= (1-q)Ai_q(\frac{x}{q}) -\frac{x}{q(1+q)}\,_1\phi_1 (0;-q^2;q,-x),
    \end{align}
    \begin{align}\label{kmjut}
      & \sum_{n=0}^{\infty} q^n  \left(q-q^3-q^n x\right)(-q^{-3}x;q)_n Ai_q(q^n x)\nonumber\\&=
      \frac{x }{1+q}\,_1\phi_1 (0;-q^2;q,-x)-\left(q(1+q)+\frac{x}{q}\right) Ai_q(\frac{x}{q}).
    \end{align}
  \end{theorem}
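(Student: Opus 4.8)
The plan is to run, with $y(x)=Ai_q(x)$, the same scheme used to prove Theorems \ref{hermith}, \ref{59} and \ref{5h9}. First I would take from the Appendix the second-order $q$-difference equation satisfied by the $q$-Airy function $(\ref{ai1})$, write it in the form $(\ref{mxlk})$, and read off $p(x)$ and $r(x)$; a short computation then gives the solution $f(x)$ of $(\ref{fht})$. Because the right-hand sides of $(\ref{omnnk})$--$(\ref{kmjut})$ are expressed through $Ai_q(x/q)$ and $D_{q^{-1}}Ai_q$, the relevant master identity here is $(\ref{csw})$ of Theorem \ref{hnhhbgdds} rather than the $D_q$-version $(\ref{cswq})$, and substituting this $f$ together with $A(x)$ from $(\ref{A1})$ specialises $(\ref{csw})$ to the $q$-Airy setting.

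Next I would clear the right-hand side once and for all. Differentiating the series $(\ref{ai1})$ term by term should show that $D_{q^{-1}}Ai_q(x)$ is a fixed scalar multiple of the Ramanujan function ${}_1\phi_1(0;-q^2;q,-x)$. Granting this, the bracket $f(x/q)h(x/q)(y(x/q)u(x/q)-D_{q^{-1}}y(x))$ on the right of $(\ref{csw})$ reduces, for the fragment solutions below, precisely to the linear combinations of $Ai_q(x/q)$ and ${}_1\phi_1(0;-q^2;q,-x)$ with the $x$-dependent coefficients displayed in $(\ref{omnnk})$--$(\ref{kmjut})$.

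The three identities then come from three fragments of the $q$-Ricatti equation $(\ref{yu})$. For $(\ref{omnnk})$ and $(\ref{omnnk1})$ I would use the Bernoulli fragment $(\ref{jnhhygee})$, with solution $u(x)=1/(x+c)$ from $(\ref{all})$ and, as the function of $(\ref{241887})$, the two branches $h(x)=1+x/c$ and $h(x)=x$ of $(\ref{all2})$; substituting these into the specialised $(\ref{csw})$ and then writing the indefinite $q$-integral on the left as a Jackson series through $(\ref{nun})$ — valid since the integrand is continuous at $0$, so that $(\ref{fundamen})$ turns it into $\int_0^x$ — puts the left-hand side in the stated summed form, the two choices of $h$ producing $(\ref{omnnk})$ and its $c=0$ companion $(\ref{omnnk1})$. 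For $(\ref{kmjut})$ I would instead take the fragment of $(\ref{yu})$ whose solution is of the form $u(x)=\mathrm{const}/(x+\mathrm{const})$ but not the $1/(x+c)$ of the Bernoulli fragment; the corresponding $h(x)$ in $(\ref{241887})$ is then a $q$-exponential of the type $1/(bx;q)_\infty$, and it is this $h(x)$, evaluated along the $q$-lattice, that manufactures the $q$-shifted factorial $(-q^{-3}x;q)_n$ in $(\ref{kmjut})$.

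The coefficient algebra is routine; the step I expect to be the main obstacle is the passage from the indefinite $q$-integral to these closed summed forms — checking that, once the fragment solutions are substituted, the $q$-shifted factorials and the alternating factor $(-1)^n$ really do come out of the integrating factor together with the chosen $h$, and that the free constant of the fragment lines up with the parameter $c$ of the theorem. As sanity checks, each series converges for all real $x$ (immediate since $q^n\to0$ and $Ai_q$ is continuous at $0$), and setting $c=0$ in $(\ref{omnnk})$ must reproduce $(\ref{omnnk1})$ up to an overall sign.
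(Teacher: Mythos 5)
Your plan for \eqref{omnnk} and \eqref{omnnk1} is exactly the paper's proof. Comparing \eqref{kjdsmhrfgfggtr} with \eqref{mxlk} gives $p(x)=-\tfrac{1+q}{q(1-q)x}$ and $r(x)=\tfrac{1}{q(1-q)^2x}$, whence $f(qx)=-q\,f(x)$ and $f(q^nx)=(-1)^nq^nf(x)$ (the source of the alternating factor), together with $D_{q^{-1}}Ai_q(x)=\tfrac{1}{1-q^2}\,{}_1\phi_1(0;-q^2;q,-x)$; then the Bernoulli fragment \eqref{jnhhygee} with $u=1/(x+c)$ and the two branches $h=1+x/c$, $h=x$, plus the fundamental theorem \eqref{fundamen} with $\lim_n H(q^nx)=0$, give the two series. (One terminological slip: ${}_1\phi_1(0;-q^2;q,-x)$ is not the Ramanujan function of this paper, which is ${}_0\phi_1(-;0;q,-qx)$; this does not affect the argument.)

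For \eqref{kmjut}, however, your description of the fragment would fail as stated. The fragment actually needed is $\tfrac1q u(x)u(x/q)+A(x)u(x/q)=0$, whose nontrivial solution is $u(x)=-qA(qx)=\tfrac{q(1+q)+x}{q(1-q)x}=\tfrac{1+q}{(1-q)x}+\tfrac{1}{q(1-q)}$, i.e.\ of the form $\tfrac{a}{x}+b$ --- \emph{not} $\mathrm{const}/(x+\mathrm{const})$, and the corresponding $h$ is \emph{not} of the form $1/(bx;q)_\infty$. The distinction is material: for $u=\tfrac{a}{x}+b$ the relation \eqref{241887} gives $h(qx)=h(x)\bigl(1-(1-q)a-(1-q)bx\bigr)=-q\,h(x)\bigl(1+\tfrac{x}{q^2}\bigr)$, which telescopes along the lattice to $h(q^nx/q)=(-q)^n(-q^{-3}x;q)_n\,h(x/q)$; the single Pochhammer $(-q^{-3}x;q)_n$ in \eqref{kmjut} comes precisely from this, and the $(-q)^n$ cancels against $f(q^nx)=(-1)^nq^nf(x)$ to leave the non-alternating $q^n$ in the sum. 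By contrast, a $u$ of the form $\mathrm{const}/(x+\mathrm{const})$ makes $h(qx)/h(x)$ a ratio of two linear factors, so iterating produces a \emph{quotient} of two $q$-shifted factorials, which cannot match the single factor $(-q^{-3}x;q)_n$ of the statement; and a literal $h=1/(bx;q)_\infty$ would satisfy $h(qx)=(1-bx)h(x)$ without the constant multiplier $-q$ that is essential to the bookkeeping. So the idea (a non-Bernoulli fragment whose $h$ generates the Pochhammer along the $q$-lattice) is right, but the fragment and its solution must be corrected before the coefficient algebra can go through.
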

  \begin{proof}
  The $q$-Airy function is defined in \eqref{ai1} and  satisfies the second order $q$-difference equation \eqref{kjdsmhrfgfggtr}.
     By comparing   \eqref{kjdsmhrfgfggtr} with  (${\ref{mxlk}}$), we get
  \begin{equation}\label{need21}
    p(x)= - \dfrac{1+q}{q(1-q)x}, \quad \quad r(x) =   \frac{ 1}{q(1-q)^2x}.
  \end{equation}
  By taking the fragment  $({\ref{jnhhygee}})$,  we get $u(x)$ and $h(x)$ as in  $({\ref{all}})$ and $({\ref{all2}})$, respectively.
 Therefore  $({\ref{csw}})$  takes the form
  \begin{align}\label{jkiutref}
    &\int f(t) h(t/q) \left( -\frac{q(1+q)+t}{q^2 (1-q)t} u(t/q) +\frac{1}{q(1-q)^2t} \right)y(t) d_qt  \nonumber\\&=f(x/q )h(x/q) \left(y(x/q) u(x/q)-D_{q^{-1}}y(x) \right).
  \end{align}
  Denote the right hand side of Equation  $({\ref{jkiutref}})$ by $H(x)$. I.e
  \begin{equation*}
    H(x)= f(x/q )h(x/q) \left(y(x/q) u(x/q)-D_{q^{-1}}y(x) \right).
  \end{equation*}
  Then from  $({\ref{fundamen}})$, we obtain
  \begin{equation}\label{equ125}
  \int_{0}^{x}  f(t) h(t/q) \left( -\frac{q(1+q)+t}{q^2 (1-q)t} u(t/q) +\frac{1}{q(1-q)^2t} \right)y(t) d_qt = H(x) -\lim _{n \rightarrow \infty} H(q^n x).
  \end{equation}
 From   (${\ref{fht}}$), we obtain
  $f(qx)= (-q) f(x).$ Consequently,
   \begin{equation}\label{hnu}
  f(q^nx)= (-1)^n q^n f(x)\quad (n\in \mathbb{N}_0).
  \end{equation}
   Since
  \begin{equation}\label{hnyyw}
  D_{q^{-1}} Ai_q(x)= \frac{1}{1-q^2}\,_1\phi_1 (0;-q^2;q,-x),
\end{equation}
and using $h(x)=1+\frac{x}{c}$ we get
   \be \label{equ22}H(x)= \frac{f(x)}{c q}\left(\frac{cq+x}{q(1-q^2)}\,_1\phi_1 (0;-q^2;q,-x)-Ai_q(\frac{x}{q}) \right).\ee
 Hence  $\lim _{n \rightarrow \infty} H(q^n x)=0$. From (${\ref{nun}}$),(${\ref{all}}$),  (${\ref{all2}}$) with $c \neq 0$ and (${\ref{equ125}}$), we obtain
  \begin{align}\label{equ12522}
 & \int_{0}^{x}  f(t) h(t/q) \left( -\frac{q(1+q)+t}{q^2 (1-q)t} u(t/q) +\frac{1}{q(1-q)^2t} \right)y(t)\, d_qt \nonumber\\&= \frac{f(x)}{cq (1-q)} \sum_{n=0}^{\infty} (-q)^n \left(c-1+q^2+q^nx\right) y(q^n x) = H(x).
  \end{align}
  Combining equations (${\ref{equ22}})$   and (${\ref{equ12522}}$) yields $({\ref{omnnk}})$.
  Substituting with  $h(x)=x$ we get
   \be \label{equ222}H(x)= \frac{f(x)}{ q}\left(\frac{x}{q(1-q^2)}\,_1\phi_1 (0;-q^2;q,-x)-Ai_q(\frac{x}{q}) \right).\ee
 Hence  $\lim _{n \rightarrow \infty} H(q^n x)=0$. From (${\ref{nun}}$),(${\ref{all}}$),  (${\ref{all2}}$) with $c =0$ and (${\ref{equ125}}$), we obtain
  \begin{align}\label{equ125222}
 & \int_{0}^{x}  f(t) h(t/q) \left( -\frac{q(1+q)+t}{q^2 (1-q)t} u(t/q) +\frac{1}{q(1-q)^2t} \right)y(t)\, d_qt \nonumber\\&= \frac{f(x)}{q (1-q)} \sum_{n=0}^{\infty} (-q)^n \left(-1+q^2+q^nx\right) y(q^n x) = H(x).
  \end{align}
  Combining equations (${\ref{equ222}})$   and (${\ref{equ125222}}$) yields $({\ref{omnnk1}})$.
Similarly, we  prove   $({\ref{kmjut}})$,
  by  taking  the fragment
\begin{equation*}
  \frac{1}{q} u(x) u(x/q)-\frac{q(1+q)+x}{q^2 (1-q)x} u(x/q) =0,
\end{equation*}
 which implies that  $ u(x) = \dfrac{q(1+q)+x}{q(1-q)x}$. Since $h(x)$ satisfies  $({\ref{241887}})$, then
 \begin{equation}\label{kmut}
 h(q^nx)= (-q)^n (\frac{-x}{q^2};q)_n  h(x)\quad (n\in \mathbb{N}_0).
 \end{equation}
Substituting with $u(x)$  into  $({\ref{csw}})$ and using equations $({\ref{hnu}})$, $({\ref{hnyyw}})$ and $({\ref{kmut}})$,  we get   $({\ref{kmjut}})$.
  \end{proof}
\begin{theorem}\label{kmjuqqqqv}
  Let $c \in \mathbb{R}$.  If $ A_q(x)$ is the Ramanujan function which is  defined in \eqref{ram1}, then
    \begin{align}\label{omnnkjn}
     &\sum_{n=0}^{\infty} q^{\frac{n(n-3)}{2}} x^n \left(1-q+qc+q^{n+2}x\right) A_q(q^nx)=
     (1-q) A_q(\frac{x}{q}) - (cq+x) A_q(qx),
    \end{align}
    \begin{align}\label{omnnkjn1}
     &\sum_{n=0}^{\infty} q^{\frac{n(n-3)}{2}} x^n \left(1-q+q^{n+2}x\right) A_q(q^nx)=
     (1-q) A_q(\frac{x}{q}) - x A_q(qx),
    \end{align}
    \begin{align}\label{omnnhk}
    &\sum_{n=0}^{\infty} q^{\frac{n(n-3)}{2}}  \left(1-q^2+q^{n+2}x\right)(x;q)_n A_q(q^nx)=
     \frac{x(1+q)-q}{x} A_q(\frac{x}{q}) - x A_q(qx).
    \end{align}
  \end{theorem}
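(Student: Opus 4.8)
The plan is to follow the template established in the proofs of Theorems~\ref{hermith}, \ref{59}, \ref{kmjuqqqq}: identify the $q$-difference equation satisfied by the Ramanujan function $A_q(x)$, read off $p(x)$ and $r(x)$ by matching against~\eqref{mxlk}, solve~\eqref{fht} for $f(x)$, then specialize the master formula~\eqref{csw} using the appropriate Riccati fragments. Since the conclusions are stated as summation identities rather than indefinite $q$-integrals, I expect the argument to invoke the fundamental theorem~\eqref{fundamen} exactly as in the proof of Theorem~\ref{kmjuqqqq}: write the right-hand side of the specialized~\eqref{csw} as $H(x)$, expand the definite $q$-integral $\int_0^x$ via~\eqref{nun}, check that $\lim_{n\to\infty}H(q^nx)=0$, and equate.

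First I would record, from the appendix definition~\eqref{ram1} and its defining $q$-difference equation, the coefficients $p(x)$ and $r(x)$; the presence of the factor $q^{n(n-3)/2}x^n$ and the term $A_q(q^nx)$ on the left of~\eqref{omnnkjn}–\eqref{omnnhk} strongly suggests that $f(x)$ satisfies a relation of the form $f(qx)=q^{c_1}x^{c_2}f(x)$ (so that iterating gives $f(q^nx)=q^{n(n-3)/2 + \text{(linear)}}x^n f(x)$, up to the normalization absorbed into the constants), and that $A_q(qx)$ appears because $D_{q^{-1}}A_q(x)$ is proportional to $A_q(qx)$ — the analogue of~\eqref{hnyyw} for the Ramanujan function. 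I would look this derivative formula up in the appendix (or derive it termwise from~\eqref{ram1}) and quote it as the Ramanujan-function counterpart of~\eqref{4666vgf}, \eqref{4666vgfh}, \eqref{hnyyw}.

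Next, for~\eqref{omnnkjn} and~\eqref{omnnkjn1} I would take the Bernoulli fragment~\eqref{jnhhygee}, giving $u(x)=1/(x+c)$ and $h(x)$ as in~\eqref{all2}, and substitute $h(x)=1+x/c$ (for $c\neq0$) and $h(x)=x$ (for $c=0$) into the specialized~\eqref{csw}; the two resulting identities are~\eqref{omnnkjn} and~\eqref{omnnkjn1} respectively, after computing $H(x)$ explicitly and observing $\lim_{n\to\infty}H(q^nx)=0$ from the decay of $f(q^nx)$. For~\eqref{omnnhk} I would instead take the fragment consisting of the two middle terms, $\tfrac1q u(x)u(x/q)+A(x)u(x/q)=0$ (the analogue of the fragment used for~\eqref{kmjut}), solve for $u(x)$ — which should come out of the form $(\text{const}\cdot x + \text{const})/x$ — determine the corresponding $h(x)$ from~\eqref{241887}, and in particular get the iteration $h(q^nx)=(\text{const})^n(x;q)_n h(x)$ explaining the $(x;q)_n$ factor; then substitute into~\eqref{csw} and again pass to the summation form.

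The main obstacle I anticipate is purely computational rather than conceptual: getting all the $q$-powers and sign conventions correct. Specifically, (i) pinning down the exact $q$-difference equation and the exact derivative formula for $A_q$ from the appendix, since the exponent $q^{n(n-3)/2}$ and the shift to $A_q(qx)$ versus $A_q(x/q)$ are sensitive to these; (ii) verifying that $f(q^nx)$ decays fast enough that $\lim_{n\to\infty}H(q^nx)=0$ — this should follow because $A_q$ and $\,_1\phi_1$-type series converge and $f(q^nx)$ carries a factor like $q^{n(n-3)/2}x^n$ which vanishes superexponentially; and (iii) for~\eqref{omnnhk}, correctly identifying the constant $\lambda$ with $h(q^nx)=\lambda^n(x;q)_n h(x)$ and checking it matches the claimed left-hand side. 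None of these steps is deep, but each is a place where a misplaced power of $q$ would propagate, so I would carry them out carefully and cross-check against the already-proven Airy case~\eqref{omnnk}, to which~\eqref{omnnkjn} is closely parallel.
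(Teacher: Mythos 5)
Your proposal follows essentially the same route as the paper: read off $p(x)$ and $r(x)$ from the Ramanujan $q$-difference equation \eqref{kjdsmhrfgfggnuhtr}, take the fragment \eqref{jnhhygee} with $h(x)=1+x/c$ and $h(x)=x$ for the first two identities and the fragment $\tfrac1q u(x)u(x/q)+A(x)u(x/q)=0$ for the third, then convert to a summation identity via \eqref{fundamen}, \eqref{nun}, the iteration of $f(q^nx)$ and $h(q^nx)$, and the derivative formula $D_{q^{-1}}A_q(x)=\tfrac{q}{1-q}A_q(qx)$, exactly as the paper does. The only deviations are cosmetic (e.g.\ the solved $u(x)$ for \eqref{omnnhk} is $\bigl((1+q)x-1\bigr)/\bigl((1-q)x^2\bigr)$, a quotient by $x^2$ rather than $x$, and the iteration is $h(q^nx)=(qx;q)_n x^{-n}h(x)$), which you correctly flag as bookkeeping to be carried out.
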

  \begin{proof}
 The Ramanujan function is defined in \eqref{ram1} and  satisfies the second order $q$-difference equation \eqref{kjdsmhrfgfggnuhtr}.
     By comparing   \eqref{kjdsmhrfgfggnuhtr} with  (${\ref{mxlk}}$), we get
  \begin{equation}\label{need22}
    p(x)=  \dfrac{1-qx}{q(1-q)x^2}, \quad \quad r(x) =  \frac{ 1}{(1-q)^2 x^2}.
  \end{equation}
  By taking the fragment  $({\ref{jnhhygee}})$, we get $u(x)$ and $h(x)$ as in  $({\ref{all}})$ and $({\ref{all2}})$, respectively.
Therefore  $({\ref{csw}})$ takes the form
  \begin{align}\label{jkiutrmef}
    &\int f(t) h(t/q) \left(\frac{1-t(1+q)}{q (1-q)t^2} u(t/q) +\frac{1}{(1-q)^2 t^2} \right)y(t) d_qt \nonumber\\& =f(x/q) h(x/q) \left(y(x/q) u(x/q)- D_{q^{-1}}y(x)\right).
  \end{align}
  Denote the right hand side of Equation  $({\ref{jkiutrmef}})$ by $G(x)$. I.e
  \begin{equation*}
    G(x)= f(x/q )h(x/q) \left(y(x/q) u(x/q)-D_{q^{-1}}y(x) \right).
  \end{equation*}
  Then from $({\ref{fundamen}})$, we get
  \begin{equation}\label{equ125*}
  \int_{0}^{x}  f(t) h(t/q) \left(\frac{1-t(1+q)}{q (1-q)t^2} u(t/q) +\frac{1}{(1-q)^2 t^2} \right)y(t) d_qt = G(x) - \lim _{n \rightarrow \infty} G(q^n x).
  \end{equation}
  From (${\ref{fht}}$), we obtain
  $f(qx)= q^2 x f(x).$ Consequently,
   \begin{equation}\label{hnu1}
  f(q^nx)= q^{\frac{n(n-1)}{2}} x^n f(x)\quad (n\in \mathbb{N}_0).
  \end{equation}
   Since
  \begin{equation}\label{hnyyw1}
  D_{q^{-1}} A_q(x)= \frac{q}{1-q} A_q(qx),
\end{equation}
substituting with $h(x)= 1+\frac{x}{c}$, then
   \be \label{equ22*}G(x)= \frac{f(x)}{cqx}\left( A_q(\frac{x}{q}) - \frac{(cq+x)}{1-q} A_q(qx) \right).\ee

     Hence $\lim _{n \rightarrow \infty} G(q^n x)=0$.  From (${\ref{nun}}$),(${\ref{all}}$),  (${\ref{all2}}$) with $c\neq0$ and (${\ref{equ125*}}$), we obtain
  \begin{align}\label{equ12522*}
 & \int_{0}^{x}  f(t) h(t/q) \left( -\frac{q(1+q)+t}{q^2 (1-q)t} u(t/q) +\frac{1}{q(1-q)^2 t} \right)y(t)\, d_qt \nonumber\\&= \frac{f(x)}{cq (1-q)} \sum_{n=0}^{\infty} q^{\frac{n(n-3)}{2}}x^{n-1} \left(1-q+qc+q^{n+2}x\right) y(q^n x)= G(x).
  \end{align}
  Combining equations (${\ref{equ22*}})$  and   (${\ref{equ12522*}}$) yields $({\ref{omnnkjn}})$.
  Substituting with $h(x)= x$, then
   \be \label{equ222*}G(x)= \frac{f(x)}{qx}\left( A_q(\frac{x}{q}) - \frac{x}{1-q} A_q(qx) \right).\ee

     Hence $\lim _{n \rightarrow \infty} G(q^n x)=0$.  From (${\ref{nun}}$), (${\ref{all}}$), (${\ref{all2}}$) with $c=0$ and (${\ref{equ125*}}$), we obtain
  \begin{align}\label{equ125222*}
 & \int_{0}^{x}  f(t) h(t/q) \left( -\frac{q(1+q)+t}{q^2 (1-q)t} u(t/q) +\frac{1}{q(1-q)^2t} \right)y(t)\, d_qt \nonumber\\&= \frac{f(x)}{q (1-q)} \sum_{n=0}^{\infty} q^{\frac{n(n-3)}{2}}x^{n-1} \left(1-q+q^{n+2}x\right) y(q^n x)= G(x).
  \end{align}
  Combining equations (${\ref{equ222*}})$  and   (${\ref{equ125222*}}$) yields $({\ref{omnnkjn1}})$.
  Similarly, we  prove  $({\ref{omnnhk}})$,
 by taking the fragment
\begin{equation*}
  \frac{1}{q} u(x) u(x/q)+\frac{1-x(1+q)}{q (1-q)x^2} u(x/q) =0,
\end{equation*}
 which implies that  $  u(x) = \frac{x(1+q)-1}{(1-q)x^2}$. Since $h(x)$ satisfies   $({\ref{241887}})$, then
 \begin{equation}\label{kmut1}
 h(q^nx)= \frac{(qx;q)_n}{x^n}  h(x) \quad (n\in \mathbb{N}_0).
 \end{equation}
 Substituting with $u(x)$   into  $({\ref{csw}})$ and using  $({\ref{hnu1}})$, $({\ref{hnyyw1}})$ and $({\ref{kmut1}})$, we get $({\ref{omnnhk}})$.
  \end{proof}
  \begin{corollary}
  Let $ Ai_q(x)$ and $ A_q(x)$ be the $q$-Airy function and the Ramanujan function which are  defined in \eqref{ai1} and  \eqref{ram1}, respectively. Then
    \begin{align*}
    & \sum_{n=0}^{\infty} (-1)^n q^{2n} Ai_q(q^n x)= \frac{q(1-q^2)+x}{qx(1+q)}\,_1\phi_1 (0;-q^2;q,-x)-\frac{(1-q)}{x}Ai_q(\frac{x}{q}),\\&
     \sum_{n=0}^{\infty} q^{\frac{n(n-1)}{2}} x^n  A_q(q^nx) =
     \frac{1-q}{q^2x} A_q(\frac{x}{q}) + \frac{1-q-x}{q^2x} A_q(qx).
    \end{align*}
  \end{corollary}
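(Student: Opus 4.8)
The plan is to obtain both identities by purely algebraic manipulation of the parametrized families of $q$-integrals already proved in Theorems~\ref{kmjuqqqq} and~\ref{kmjuqqqqv}, exploiting the fact that the real parameter $c$ enters those formulas \emph{linearly}.

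For the first identity I would start from \eqref{omnnk}, which holds for every real $c$. Comparing the coefficients of $c$ on the two sides (equivalently, subtracting two instances of \eqref{omnnk} with distinct values of $c$ and dividing by their difference) collapses the left-hand series and yields the auxiliary relation
\[
\sum_{n=0}^{\infty}(-1)^n q^{n}\,Ai_q(q^{n}x)=\frac{1}{1+q}\,{}_1\phi_1(0;-q^2;q,-x).
\]
Next I would feed this into \eqref{omnnk1}: writing the left-hand side of \eqref{omnnk1} as $(1-q^2)\sum(-1)^n q^{n}Ai_q(q^{n}x)-x\sum(-1)^n q^{2n}Ai_q(q^{n}x)$ and inserting the auxiliary relation for the first sum leaves only $\sum(-1)^n q^{2n}Ai_q(q^{n}x)$ undetermined. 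Solving for it and simplifying the resulting coefficient via $q(1-q)(1+q)+x=q(1-q^2)+x$ gives exactly the first displayed identity of the corollary.

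The second identity is handled the same way using Theorem~\ref{kmjuqqqqv}. Subtracting \eqref{omnnkjn1} from \eqref{omnnkjn} and isolating the $c$-linear part gives $\sum_{n=0}^{\infty}q^{n(n-3)/2}x^{n}A_q(q^{n}x)=-A_q(qx)$. I would then go back to \eqref{omnnkjn1}, note that $q^{n(n-3)/2}q^{n}=q^{n(n-1)/2}$, so that the second term on its left-hand side equals $q^{2}x\sum_{n\ge0}q^{n(n-1)/2}x^{n}A_q(q^{n}x)$, substitute the just-computed value of $\sum q^{n(n-3)/2}x^{n}A_q(q^{n}x)$ into the first term, and solve for $\sum q^{n(n-1)/2}x^{n}A_q(q^{n}x)$; collecting the $A_q(x/q)$ and $A_q(qx)$ contributions produces the stated formula.

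There is no genuine obstacle here beyond bookkeeping. The one point that deserves a line of justification is the legitimacy of extracting the $c$-linear term: this is allowed precisely because \eqref{omnnk}, \eqref{omnnk1}, \eqref{omnnkjn}, and \eqref{omnnkjn1} are identities in $x$ valid for \emph{all} $c\in\mathbb{R}$. The only place a slip is easy to make is the exponent shift $q^{n(n-3)/2}q^{n}=q^{n(n-1)/2}$ in the Ramanujan case; everything else is elementary algebra, and convergence of all the series is inherited from the theorems invoked.
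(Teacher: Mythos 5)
Your proposal is correct and is essentially the paper's argument: the paper simply substitutes $c=1-q^2$ into \eqref{omnnk} and $c=-\frac{1}{q}(1-q)$ into \eqref{omnnkjn}, which annihilates the constant term in the bracket and yields both identities at once, whereas you reach the same conclusion by extracting the $c$-linear coefficient and recombining with \eqref{omnnk1} and \eqref{omnnkjn1} --- equivalent manipulations, since the identities are affine in $c$. (Your route also rederives the auxiliary sums \eqref{hnbmjmbgdv} and \eqref{hnnm} as byproducts, which the paper proves separately later.)
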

  \begin{proof}
     Substituting with $c=1-q^2$ and   $c=\frac{-1}{q}(1-q)$    in $({\ref{omnnk}})$ and $({\ref{omnnkjn}})$, respectively, we get the desired results.
  \end{proof}
\vskip0.5cm
 \section {$q$-integrals  from  Bernoulli fragments }
 In this  section, we introduce indefinite $q$-integrals involving
  Bernoulli fragments of $({\ref{yu}})$ or $({\ref{yus}})$,  of the form
\begin{equation}\label{bbbb}
     \frac{1}{q}  D_{q^{-1}} u(x)+\frac{1}{q}u(x) u(x/q)+ C(x) u(x/q)= 0,
   \end{equation}
   or
\begin{equation}\label{bbbb2}
       D_{q} u(x)+u(x) u(qx)+ D(x) u(qx)= 0,
   \end{equation}
   respectively.
    The trivial solution $ u(x) =0 $ of  $({\ref{bbbb}})$  implies that  $ h(x) =c$ is a solution of $({\ref{241887}})$, where $c$ is a non zero constant. Then $({\ref{mkl}})$ becomes
    \begin{equation}\label{d}
      \int f(x)r(x) y(x) d_qx   =  -f(x/q) D_{q^{-1}}y(x).
    \end{equation}
Similarly, the trivial solution $ u(x) =0 $ of $({\ref{bbbb2}})$
 implies that  $ k(x) =c$ is a solution of  $({\ref{21745887s}})$, where $c$ is a non zero constant. Then
    $ ({\ref{mkrl}})$ becomes
    \begin{equation}\label{db}
      \int F(x)r(x) y(x) d_qx   =  -F(x) D_{q^{-1}}y(x).
    \end{equation}

    \begin{theorem}
   Let $n\in\mathbb{N}$. The following statements are true:
     \begin{itemize}
    \item [(a)]
If $h_n(x;q)$ is  the  discrete $q$-Hermite I  polynomial of degree $n$ which is  defined in \eqref{hn1}, then
  \begin{equation}\label{jnido}
  \int (q^2x^2;q^2)_\infty h_n(x;q) d_qx = -q^{n-1}(1-q)(x^2;q^2)_\infty h_{n-1}(\frac{x}{q};q) .
\end{equation}
      \item [(b)] If $ p_n(x;a,b;q)$ is the big $q$-Laguerre  polynomial of degree $n$ which is defined in \eqref{big}, then
  \begin{equation}\label{hnbbgdv}
 \int \dfrac{(\frac{x}{a},\frac{x}{b};q)_\infty }{(x ;q)_\infty}p_n(x;a,b;q) d_qx = \dfrac{abq^2 (1-q) }{(1-aq)(1-bq)}\dfrac{(\frac{x}{aq},\frac{x}{bq};q)_\infty}{(x ;q)_\infty} p_{n-1}(x;aq,bq;q).
\end{equation}
      \item [(c)] If $\alpha > -1$ and  $ L_n^{\alpha}(x;q)$ is the  $q$-Laguerre  polynomial of degree $n$ which is  defined in \eqref{lalpha},  then
  \begin{equation}\label{jmmmmd}
  \int \dfrac{x^\alpha }{(-x;q)_\infty}L_n^{\alpha}(x;q) d_qx = \dfrac{x^{\alpha+1}   }{[n]_q(-x;q)_\infty} L_{n-1}^{\alpha+1}(x;q).
\end{equation}

    \end{itemize}
     \end{theorem}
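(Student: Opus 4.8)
All three identities are the special case of formula \eqref{d}, which was produced by feeding the trivial solution $u\equiv 0$ of the Bernoulli fragment \eqref{bbbb} into \eqref{mkl}; recall that \eqref{d} states $\int f(x)r(x)y(x)\,d_qx=-f(x/q)D_{q^{-1}}y(x)$, where $y$ solves \eqref{mxlk} and $f$ solves \eqref{fht}. So the plan is, in each of the three cases, to carry out the same four bookkeeping steps: (i) read off $p(x)$ and $r(x)$ by matching the defining second-order $q$-difference equation of the polynomial family (listed in the appendix, written in the form \eqref{mxlk}) against \eqref{mxlk}; (ii) solve the first-order $q$-difference equation \eqref{fht}, $\tfrac1q D_{q^{-1}}f(x)=p(x)f(x)$, for an explicit $f$ expressed as a ratio of $q$-shifted factorials; (iii) substitute $f(x)$, $f(x/q)$ and $r(x)$ into \eqref{d} and replace $D_{q^{-1}}y(x)$ using the known first-order (``lowering'') relation for that family; and (iv) divide through by the resulting constant to obtain the stated normalization. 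In this degenerate case the integrand of \eqref{d} is just $f(x)r(x)y(x)$, so $f(x)r(x)$ is exactly the weight multiplying the polynomial in the integrands of \eqref{jnido}, \eqref{hnbbgdv} and \eqref{jmmmmd}, while $f(x/q)$ is exactly the factor appearing on the corresponding right-hand side; this is what identifies $f$ in step (ii).

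For part (a) the work is already contained in the proof of Theorem~\ref{hermith}: there $p(x)=-x/(1-q)$, $r(x)=q^{1-n}[n]_q/(1-q)$, and $f(x)=(q^2x^2;q^2)_\infty$, so $f(x/q)=(x^2;q^2)_\infty$. Substituting these into \eqref{d} together with $D_{q^{-1}}h_n(x;q)=[n]_q h_{n-1}(x/q;q)$ from \eqref{4666vgf} gives
\[
\int (q^2x^2;q^2)_\infty\,\frac{q^{1-n}[n]_q}{1-q}\,h_n(x;q)\,d_qx=-(x^2;q^2)_\infty\,[n]_q\,h_{n-1}(x/q;q),
\]
and clearing the constant $q^{1-n}[n]_q/(1-q)$ yields \eqref{jnido}.

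Parts (b) and (c) follow the same recipe. For the big $q$-Laguerre polynomials, matching \eqref{big} against \eqref{mxlk} supplies $p(x)$ and a non-constant $r(x)$; solving \eqref{fht} produces (up to a constant) $f(x)=\dfrac{(x/a,x/b;q)_\infty}{(qx;q)_\infty}$, so that $f(x)r(x)$ is a constant multiple of $\dfrac{(x/a,x/b;q)_\infty}{(x;q)_\infty}$ and $f(x/q)=\dfrac{(x/aq,x/bq;q)_\infty}{(x;q)_\infty}$; combining this with the lowering relation $D_{q^{-1}}p_n(x;a,b;q)=\mathrm{const}\cdot p_{n-1}(x;aq,bq;q)$ from the appendix and collecting the constants gives \eqref{hnbbgdv}. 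For the $q$-Laguerre polynomials, matching \eqref{lalpha} against \eqref{mxlk} and solving \eqref{fht} gives $f(x)=\mathrm{const}\cdot\dfrac{x^{\alpha+1}}{(-qx;q)_\infty}$, whence $f(x)r(x)$ is a constant multiple of $\dfrac{x^\alpha}{(-x;q)_\infty}$ and $f(x/q)$ a constant multiple of $\dfrac{x^{\alpha+1}}{(-x;q)_\infty}$; feeding this and the lowering relation $D_{q^{-1}}L_n^{\alpha}(x;q)=\mathrm{const}\cdot L_{n-1}^{\alpha+1}(x;q)$ (the constant carrying the factor $[n]_q$) into \eqref{d} and rescaling yields \eqref{jmmmmd}.

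The only points that require care, and hence the main (essentially the sole) obstacle, are: choosing the infinite-product solution of \eqref{fht} precisely so that $f(x)r(x)$ reproduces the weight in the integrand and $f(x/q)$ reproduces the factor on the right-hand side — which is what pins down the overall normalization — and recording the exact constants in the two lowering relations for the big $q$-Laguerre and $q$-Laguerre families. Both are routine computations using the structural formulas collected in the appendix, so the proof amounts to careful tracking of constants rather than any new idea.
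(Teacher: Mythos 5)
Your proposal is correct and follows exactly the paper's own route: in each case it reads off $p$ and $r$ from the appendix equation, solves \eqref{fht} for $f$, and substitutes into \eqref{d} together with the relevant lowering relation. The only slip is cosmetic: in part (c) the factor $[n]_q$ in \eqref{jmmmmd} comes from dividing out the constant in $r(x)=[n]_q/\big(x(1-q)(1+x)\big)$, not from the lowering relation \eqref{4587}, which is $D_{q^{-1}}L_n^{\alpha}(x;q)=\frac{-q^{\alpha+1}}{1-q}L_{n-1}^{\alpha+1}(x;q)$ and carries no $[n]_q$.
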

     \begin{proof}
     The proof of   $(a)$ follows by  substituting with $r(x)$ and $f(x)$ from $({\ref{needh1}})$ and $({\ref{needh2}})$, respectively, into  $({\ref{d}})$.
The proof of $(b)$ follows by comparing   \eqref{kjhgf} with  (${\ref{mxlk}}$) to get
  \begin{equation*}
    p(x)= \dfrac{ x-q(a+b-qab)}{ab q^2  (1-q)(1-x)},\quad \quad r(x) =  - \frac{ q^{-n-1} [n]_q}{ab(1-q)(1-x)} .
  \end{equation*}
Hence $f(x) =  \dfrac{(\frac{x}{a},\frac{x}{b};q)_\infty }{(qx;q)_\infty}$  is a solution of    (${\ref{fht}}$).
  Substituting with $r(x)$ and $f(x)$ into Equation $({\ref{d}})$
  and using
\begin{equation}\label{246}
  D_{q^{-1}}p_n(x;a,b;q)=\dfrac{q^{1-n}[n]_q}{(1-aq)(1-bq)}p_{n-1}(x;aq,bq;q),
\end{equation}
see \cite[Eq.(3.11.7)]{askey},
 we get (${\ref{hnbbgdv}}$).
   To prove  $(c)$,
       compare   \eqref{kjdhg} with  (${\ref{mxlk}}$) to obtain
  \begin{equation*}
    p(x)= \dfrac{ 1-q^{\alpha+1}(1+x)}{q^{\alpha+1}x(1+x)(1-q)},\quad \quad r(x) = \frac{ [n]_q}{x(1-q)(1+x)}.
  \end{equation*}
Hence $ f(x) =  \dfrac{x^{\alpha+1} }{(-qx;q)_\infty}$
is a solution of  ({\ref{fht}}). Finally, we prove (${\ref{jmmmmd}}$)
  by substituting with $r(x)$ and $f(x)$ into  $({\ref{d}})$
   and using
\begin{equation}\label{4587}
  D_{q^{-1}}L_n^{\alpha}(x;q)=\dfrac{- q^{\alpha+1} }{(1-q)}L_{n-1}^{\alpha+1}(x;q),
\end{equation}
see \cite[Eq.(3.21.8)]{askey}.
\end{proof}
\begin{theorem}
The following statements are true:
\begin{itemize}
  \item [(a)] If $\widetilde{h}_n(x;q)$ is the  discrete $q$-Hermite II  polynomial of degree $n$  which is defined in \eqref{hn2}, then
  \begin{equation}\label{jnhhsyl}
  \int \frac{\widetilde{h}_n(x;q)}{(-x^2;q^2)_\infty} d_qx = -\dfrac{q^{1-n}(1-q)}{(-x^2;q^2)_\infty} \widetilde{h}_{n-1}(x;q).
\end{equation}
  \item [(b)]If $\nu$ is a real number,  $ \nu > -1 $, then
  \begin{equation*}
   \int \dfrac{qx^2-q^{1-\nu}[\nu]_q^2}{x(- x^2  (1-q)^2;q^2)_\infty} J_\nu^{(2)}(  x |q^2) d_qx =  \frac{-x}{(- x^2  (1-q)^2;q^2)_\infty} D_{q^{-1}}  J_\nu^{(2)}( x |q^2).
\end{equation*}
\end{itemize}
\end{theorem}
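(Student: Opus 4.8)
The plan is to derive both identities as immediate specializations of the trivial Bernoulli-fragment formula \eqref{db}, namely $\int F(x) r(x) y(x)\, d_qx = -F(x) D_{q^{-1}} y(x)$, which is valid for any $y$ solving a second-order $q$-difference equation of the form \eqref{mx} and any solution $F$ of \eqref{frht5}. In each case the only work is to read off $p(x)$ and $r(x)$ from the defining $q$-difference equation of the special function, to exhibit the corresponding $F$, and then to substitute and simplify; in fact both required pairs $(p,r)$ and both functions $F$ have already been computed in the proofs of Theorems \ref{59} and \ref{5h9}, so essentially nothing new is needed.

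For part (a) I would start from the $q$-difference equation \eqref{kjdsmhg} satisfied by $\widetilde{h}_n(x;q)$. Comparing it with \eqref{mx} gives $p(x)=-x/(1-q)$ and $r(x)=[n]_q/(1-q)$, so that $F(x)=1/(-x^2;q^2)_\infty$ solves \eqref{frht5}, exactly as in the proof of Theorem \ref{59}. Substituting these into \eqref{db} and using the $q^{-1}$-derivative formula \eqref{4666vgfh}, $D_{q^{-1}}\widetilde{h}_n(x;q)=q^{1-n}[n]_q\widetilde{h}_{n-1}(x;q)$, the scalar factor $[n]_q/(1-q)$ occurs on both sides and cancels (it is nonzero since $n\in\mathbb N$), which leaves precisely \eqref{jnhhsyl}.

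For part (b) I would instead use the $q$-difference equation \eqref{mnbvxx} for $J_\nu^{(2)}(x\,|q^2)$. Comparing it with \eqref{mx}, just as in the proof of Theorem \ref{5h9}, gives $p(x)=1/x-q(1-q)x$ and $r(x)=q-q^{1-\nu}[\nu]_q^2/x^2$, so that $F(x)=x/(-x^2(1-q)^2;q^2)_\infty$ solves \eqref{frht5}. Plugging $F$ and $r$ into \eqref{db} and combining the factor $x$ from $F$ with $r$ via $x\,r(x)=(qx^2-q^{1-\nu}[\nu]_q^2)/x$ yields the stated formula directly; no derivative identity is invoked here, since the right-hand side is left in the form $-F(x) D_{q^{-1}} J_\nu^{(2)}(x\,|q^2)$.

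I do not expect any genuine obstacle: the argument is a mechanical substitution into an already-established identity. The only points meriting a line of care are the algebraic simplification of $x\,r(x)$ in (b) and the cancellation of the nonzero constant $[n]_q/(1-q)$ in (a); one should also retain the standing hypotheses ($n\in\mathbb N$ in (a), $\nu>-1$ in (b)) so that the functions involved and the Jackson $q$-integrals are well defined near the origin and formula \eqref{db} applies.
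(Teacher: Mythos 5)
Your proposal is correct and follows exactly the paper's own route: both identities are obtained by substituting the pairs $(r,F)$ already computed in the proofs of Theorems \ref{59} and \ref{5h9} into the trivial-fragment formula \eqref{db}, with the derivative identity \eqref{4666vgfh} and the cancellation of $[n]_q/(1-q)$ in part (a) being the only (routine) extra steps. Nothing further is needed.
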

\begin{proof}
The proofs of  $(a)$ and $(b)$  follow by substituting with $r(x)$ and $F(x)$ as in the proof of
 Theorems  ${\ref{59}}$ and  ${\ref{5h9}}$ into Equation $({\ref{db}})$, respectively.
\end{proof}
\begin{theorem}
  The following statements are true:
    \begin{itemize}
      \item  [(a)] If $ Ai_q(x)$ is the $q$-Airy function which is defined in \eqref{ai1}, then
    \begin{equation}\label{hnbmjmbgdv}
    \sum_{k=0}^{\infty} (-q)^k Ai_q(q^kx) = \frac{1}{1+q}\,_1\phi_1 (0;-q^2;q,-x).
    \end{equation}
      \item [(b)] If $ A_q(x)$ is the Ramanujan function which is  defined in \eqref{ram1}, then
    \begin{equation}\label{hnnm}
       \sum_{k=0}^{\infty} q^{\frac{k(k-3)}{2}}x^k A_q(q^k x) = - \,_0\phi_1 (-;0;q,-q^2x).
    \end{equation}
    \item [(c)] If $S_n(x;q)$ is  the  Stieltjes-Wigert  polynomial of degree $n$ $(n \in\mathbb{N})$ which is     defined in \eqref{stief}, then
  \begin{equation}\label{kmjsysy}
  \sum_{k=0}^{\infty} q^{\frac{k(k-3)}{2}}x^k S_n(q^kx;q)=\frac{1}{1-q^n} S_{n-1}(qx;q).
\end{equation}
\end{itemize}
\end{theorem}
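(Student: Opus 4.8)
The plan is to derive all three identities from the degenerate (``trivial'') Bernoulli fragment $u(x)\equiv0$, for which, as noted just before \eqref{d}, $h$ is a nonzero constant and \eqref{mkl} reduces to \eqref{d}, i.e. $\int f(x)r(x)y(x)\,d_qx=-f(x/q)D_{q^{-1}}y(x)$; here $y$ is the special function at hand, $p$ and $r$ come from comparing its second-order $q$-difference equation with \eqref{mxlk}, and $f$ solves \eqref{fht}. To turn this indefinite identity into the Jackson series in the statement I would integrate from $0$ to $x$, use the fundamental theorem \eqref{fundamen} to write the left side as $H(x)-\lim_{n\to\infty}H(q^nx)$ with $H(x)=-f(x/q)D_{q^{-1}}y(x)$, expand $\int_0^x f(t)r(t)y(t)\,d_qt$ via \eqref{nun} together with the scaling law for $f(q^nx)/f(x)$ obtained by iterating \eqref{fht}, and check that $\lim_{n\to\infty}H(q^nx)=0$; cancelling the common factor $f(x)$ then leaves only numerical constants to be matched. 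This is exactly the scheme used for $Ai_q$ and $A_q$ in Theorems~\ref{kmjuqqqq} and~\ref{kmjuqqqqv}, now specialized to $u\equiv0$.

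For (a), the functions $p$ and $r$ attached to $Ai_q$ are those of \eqref{need21}; \eqref{fht} forces $f(qx)=-qf(x)$, hence $f(q^nx)=(-1)^nq^nf(x)$ and $f(x/q)=-q^{-1}f(x)$, and substituting $r(x)=\tfrac1{q(1-q)^2x}$ into \eqref{d} and expanding via \eqref{nun} turns the left side into $\tfrac{f(x)}{q(1-q)}\sum_{k\ge0}(-q)^k Ai_q(q^kx)$; by the backward-shift formula \eqref{hnyyw}, $H(x)=\tfrac{f(x)}{q(1-q^2)}\,{}_1\phi_1(0;-q^2;q,-x)$ and $H(q^nx)\to0$ since $f(q^nx)\to0$ and ${}_1\phi_1(0;-q^2;q,0)=1$, so cancelling $f(x)/q$ and using $\tfrac{1-q}{1-q^2}=\tfrac1{1+q}$ yields \eqref{hnbmjmbgdv}. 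For (b), $p$ and $r$ are those of \eqref{need22}; \eqref{fht} forces $f(qx)=q^2xf(x)$, hence $f(q^nx)=q^{n(n-1)/2}x^nf(x)$ and $f(x/q)=f(x)/(qx)$, and the power of $q$ collapses through $\tfrac{n(n-1)}{2}-n=\tfrac{n(n-3)}{2}$, so the left side becomes $\tfrac{f(x)}{(1-q)x}\sum_{k\ge0}q^{k(k-3)/2}x^k A_q(q^kx)$ while $H(x)=-\tfrac{f(x)}{(1-q)x}A_q(qx)$ by \eqref{hnyyw1}; cancelling gives $\sum_{k\ge0}q^{k(k-3)/2}x^k A_q(q^kx)=-A_q(qx)$, and recognizing $A_q(qx)={}_0\phi_1(-;0;q,-q^2x)$ from \eqref{ram1} produces \eqref{hnnm}.

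For (c) I would run the identical argument with $y=S_n(\cdot;q)$: read $p$ and $r$ off the second-order $q$-difference equation of the Stieltjes-Wigert polynomials, solve \eqref{fht} for the corresponding $f$, substitute into \eqref{d}, integrate over $[0,x]$, and simplify using the backward-shift relation $D_{q^{-1}}S_n(x;q)=c_n\,S_{n-1}(qx;q)$ from \cite{askey}. Since the Stieltjes-Wigert weight is the polynomial counterpart of the Ramanujan function, one expects the same exponent collapse, producing $\sum_{k\ge0}q^{k(k-3)/2}x^k S_n(q^kx;q)$ on the left, while $\lim_{n\to\infty}H(q^nx)=0$ follows from the super-exponential decay of the Gaussian-type factor in $f$ just as in (b). The main obstacle is the part specific to (c): locating the correct second-order $q$-difference equation and the backward-shift relation for $S_n(x;q)$ in the normalization of \eqref{stief}, and then verifying that, after cancelling $f(x)$, the accumulated $q$- and $n$-dependent constants collapse precisely to $\tfrac1{1-q^n}$.
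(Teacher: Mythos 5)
Your proposal follows exactly the paper's proof: all three identities are obtained from the trivial fragment $u\equiv0$, i.e. from \eqref{d}, expanded over $[0,x]$ via \eqref{nun} together with the iterated scaling of $f$ from \eqref{fht} and the relevant backward-shift formulas, and your computations for (a) and (b) reproduce the paper's. For (c), the ingredients you flag as still to be located are precisely \eqref{kjdshg} and \eqref{4666vf} from the paper's appendix; they give the same scaling $f(q^kx)=q^{k(k-1)/2}x^kf(x)$ as in (b), and with $r(x)=\frac{[n]_q}{x^2(1-q)}$ and $f(x/q)=f(x)/(qx)$ the accumulated constants collapse to $\frac{1}{(1-q)[n]_q}=\frac{1}{1-q^n}$, confirming your expectation.
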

\begin{proof}
The proof of  $(a)$ follows by  substituting with $r(x)$ from \eqref{need21} into  $({\ref{d}})$ and using  $({\ref{hnu}})$  and $({\ref{hnyyw}})$.
      The proof of  $(b)$ follows by  substituting with $r(x)$ from   \eqref{need22} into $({\ref{d}})$ and using  $({\ref{hnu1}})$  and \eqref{hnyyw1}.
      To prove $(c)$,  compare  Equation \eqref{kjdshg} with  (${\ref{mxlk}}$) to  get
  \begin{equation*}
    p(x)= \dfrac{ 1-qx}{q x^2 (1-q)},\quad \quad \quad r(x) =  \frac{ [n]_q}{x^2(1-q)} .
  \end{equation*}
  From   (${\ref{fht}}$), we obtain $f(qx)= q^2 x f(x).$ Consequently,
  \begin{equation}\label{kieeee}
   f(q^kx)= q^{\frac{k(k-1)}{2}} x^k f(x) \quad (k \in \mathbb{N}_0).
  \end{equation}
  Substituting  with $ r(x) $  into  $({\ref{d}})$
  and using  (${\ref{nun}}$), (${\ref{kieeee}}$) and
\begin{equation}\label{4666vf}
   D_{q^{-1}}S_n(x;q)=\frac{-q}{1-q}S_{n-1}(qx;q),
\end{equation}
see\cite[Eq.(3.27.7)]{askey},
  we get $({\ref{kmjsysy}})$.
\end{proof}
\vskip0.5cm
\begin{remark}\,
\begin{itemize}
  \item
   The indefinite  $q$-integral (${\ref{hnbbgdv}}$) is nothing else but  \cite[Eq.(42)]{qqq}  or \cite[Eq.(3.11.9)]{askey} (with $n$ is replaced by $n-1$ )
\begin{equation*}
  D_q\left( w(x;aq,bq;q)p_{n-1}(x;aq,bq;q) \right) = \frac{(1-aq)(1-bq)}{abq^2(1-q)}w(x;a,b;q)p_{n}(x;a,b;q),
\end{equation*}
where $ w(x;a,b;q)= \frac{(\frac{x}{b},\frac{x}{a};q)_\infty }{(x;q)_\infty}$ .
   \item
   The indefinite $q$-integral (${\ref{jmmmmd}}$) is equivalent to \cite[Eq.(46)]{qqq}  (if $m=n$ ) and  to \cite[Eq.(3.21.10)]{askey} (if $m=0$) (with $\alpha$ is replaced by $\alpha+1$ and $n$ is replaced by $n-1$ )
\begin{equation*}
  D_q\left( w(x;\alpha+1;q) L_{n-1}^{\alpha+1}(x;q) \right) = [n]_q w(x;\alpha;q)L_{n}^{\alpha}(x;q),
\end{equation*}
where $ w(x;\alpha;q)= \displaystyle{\frac{x^\alpha}{(-x;q)_\infty}}$.
    \item
    The indefinite $q$-integral (${\ref{jnhhsyl}}$) is equivalent to  \cite[Eq.(68)]{qqq} and  to
\begin{equation*}
  D_q\left( w(x;q)\widetilde{h}_{n-1}(x;q) \right) = \frac{q^{n-1}}{1-q} w(x;q)\widetilde{h}_{n}(x;q),
\end{equation*}
where $ w(x;q)= \frac{1}{(-x^2;q^2)}$, see \cite[Eq.(3.29.9)]{askey}.
\end{itemize}
\end{remark}
\vskip0.5cm
We need the following results to prove Theorem  ${\ref{gfdsa}}$.
   \begin{theorem}\label{lem2}
   Let $I$ be an interval containing zero and
  $g(x)$ is a solution of the first order  $q$-difference equation
\begin{equation}\label{ff}
  \frac{1}{q} D_{q^{-1}} g(x) =  A(x) g(x),  \quad g(0)=1,
\end{equation}
where $A(x)$ is the function which is defined in (${\ref{A1}}$).
  Assume that there exists $\gamma$, $0\leq \gamma <1 $ such that $\dfrac{x^\gamma}{g(x)}$ is bounded on $I$.  Then  the function
\begin{equation}\label{lopi}
  v(x)=g(x)\int_{0}^{x} \frac{1}{g(u)} d_qu, \quad x \in I
   \end{equation}
satisfies
\begin{equation}\label{mll}
      D_{q^{-1}}v(x) - q A(x) v(x) =1.
    \end{equation}
   \end{theorem}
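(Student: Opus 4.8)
The plan is to read \eqref{mll} as a first-order linear inhomogeneous $q$-difference equation whose associated homogeneous equation $\tfrac1q D_{q^{-1}}w=A(x)w$ is exactly \eqref{ff}, and to check that the variation-of-parameters ansatz \eqref{lopi} solves it. Throughout set $V(x)=\int_0^x \tfrac{1}{g(u)}\,d_qu$, so that $v=gV$.

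First I would verify that $v$ is well defined on $I$. By \eqref{nun}, $V(x)=(1-q)x\sum_{n\ge 0}q^n/g(xq^n)$, and the hypothesis $|u^{\gamma}/g(u)|\le M$ gives $|q^n/g(xq^n)|\le M|x|^{-\gamma}q^{n(1-\gamma)}$; since $0\le\gamma<1$ this series converges, so $V(x)$, and hence $v(x)$, is finite for every $x\in I$. The same bound together with $g(0)=1$ lets me take the limit termwise as $x\to 0$ to get $V(x)\sim (1-q)x\cdot\tfrac{1}{1-q}=x$; thus $v(0)=0$ and $v(x)/x\to 1$, which settles the point $x=0$ once \eqref{mll} is proved for $x\ne 0$.

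The heart of the proof is a direct computation for $x\ne 0$. Here I would apply the $q^{-1}$-Leibniz rule $D_{q^{-1}}(fg)(x)=f(x)D_{q^{-1}}g(x)+g(x/q)D_{q^{-1}}f(x)$ to $v=g\cdot V$, obtaining $D_{q^{-1}}v(x)=g(x)D_{q^{-1}}V(x)+V(x/q)D_{q^{-1}}g(x)$. From $D_qV(x)=1/g(x)$ one has $D_{q^{-1}}V(x)=1/g(x/q)$ — equivalently $V(x/q)-V(x)=\tfrac{1-q}{q}\,x/g(x/q)$ directly from \eqref{nun} — while \eqref{ff} reads $D_{q^{-1}}g(x)=qA(x)g(x)$. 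Substituting these,
\[
D_{q^{-1}}v(x)-qA(x)v(x)=\frac{g(x)}{g(x/q)}+qA(x)g(x)\bigl(V(x/q)-V(x)\bigr)=\frac{g(x)}{g(x/q)}\bigl(1+(1-q)xA(x)\bigr).
\]
To conclude, rewrite \eqref{ff} in the equivalent telescoped form $g(x/q)=g(x)\bigl(1+(1-q)xA(x)\bigr)$ (multiply $D_{q^{-1}}g(x)=qA(x)g(x)$ by $(1-q^{-1})x$); then the bracketed factor is exactly $g(x/q)/g(x)$ and the right-hand side collapses to $1$, which is \eqref{mll}.

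No serious obstacle is expected. The two delicate points are: (i) convergence of the Jackson integral in \eqref{lopi}, which is precisely what the boundedness hypothesis on $x^{\gamma}/g(x)$ is designed to secure; and (ii) keeping the $q^{\pm1}$-shifts straight in the Leibniz rule and when passing between $D_q$ and $D_{q^{-1}}$ — the cancellation itself is forced by the defining equation \eqref{ff}.
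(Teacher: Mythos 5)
Your proof is correct and rests on the same identity as the paper's, namely that $v/g$ is the Jackson antiderivative of $1/g$; the paper runs the argument backwards (dividing \eqref{mll} by $g(x/q)$ and integrating to recover \eqref{lopi}), while you verify \eqref{lopi} forwards via the $q^{-1}$-Leibniz rule, which amounts to the same cancellation forced by \eqref{ff}. Your version additionally checks convergence of the Jackson integral from the hypothesis on $x^{\gamma}/g(x)$ and treats the point $x=0$, details the paper leaves implicit.
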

   \begin{proof}
Multiplying both sides of Equation $({\ref{mll}})$  by $  \frac{1}{g(x/q)}$ to  obtain
    \begin{equation*}
 \frac{ D_{q^{-1}}v(x)}{g(x/q)}  -\frac{qA(x) v(x)}{g(x/q)}=
 \frac{1}{g(x/q)},
 \end{equation*}
 or equivalently,
 $D_{q}\Big(\frac{v(x)}{g(x)}\Big)=\frac{1}{g(x)} $.
 Hence, from $({\ref{fundamen}})$, we get
 \begin{equation*}
  v(x)=g(x)\int_{0}^{x} \frac{1}{g(u)} d_qu.
   \end{equation*}
 \end{proof}
 \vskip0.5cm
 \begin{theorem}\label{jnuuwwww}
 Assume that $v(x)$ and $g(x)$ are defined as in  Theorem ${\ref{lem2}}$ in an interval $I$ containing zero. Then
   \begin{equation}\label{mnbv}
   \int  \frac{f(x)}{g(x/q)} v(x/q)r(x) y(x)d_qx = \frac{ f(x/q)}{g(x/q)}  \Big( y(x/q)-v(x/q) D_{q^{-1}} y(x)\Big),
   \end{equation}
   where $f(x)$ is a solution of $({\ref{fht}})$ and $y(x)$ is a solution of $({\ref{mxlk}})$.
 \end{theorem}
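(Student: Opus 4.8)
The plan is to recognize \eqref{mnbv} as the special case of the master formula \eqref{csw} from Theorem \ref{hnhhbgdds} in which the auxiliary function $u(x)$ is chosen to be a solution of the \emph{linear} fragment of the $q$-Riccati equation \eqref{yu}, namely $\tfrac1q D_{q^{-1}}u(x)+A(x)u(x/q)+r(x)=0$ — except that here we allow the inhomogeneity to survive in a controlled way. Concretely, set $u(x)=1/v(x)$ where $v$ is the function of \eqref{lopi}. First I would translate \eqref{mll} into a statement about $u$: dividing \eqref{mll} by $v(x/q)v(x)$ and using the $q$-Leibniz-type identity $D_{q^{-1}}v(x) = -v(x)v(x/q)\,D_{q^{-1}}\!\big(1/v\big)(x)$, one finds that $u=1/v$ satisfies
\begin{equation*}
  \frac1q D_{q^{-1}}u(x) + q A(x)\,\frac{u(x)}{v(x)} \cdot \frac{1}{q} = -\frac{u(x)u(x/q)}{q}\cdot\big(\text{something}\big),
\end{equation*}
so the cleaner route is to avoid inverting and instead feed $u$ directly. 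Thus the second step: take $h(x)$ in Theorem \ref{hnhhbgdds} to be $h(x)=v(x)/g(x)\cdot(\text{const})$? No — rather, I would choose $h$ so that $D_q h(x)/h(x)=u(x)$ with the specific $u$ arising from $v$, and then simplify the bracket $\tfrac1q D_{q^{-1}}u+\tfrac1q u(x)u(x/q)+A(x)u(x/q)+r(x)$ using \eqref{mll}. The key algebraic point is that \eqref{mll}, rewritten, says exactly $\tfrac1q D_{q^{-1}}u(x)+A(x)u(x/q) = r(x)\cdot(\text{the }u(x)u(x/q)\text{ term})$ up to the combinatorial factors $q$, so that bracket collapses to a single term proportional to $u(x)u(x/q)\,r(x)$, which after substituting $h(x/q)u(x/q)=\text{(something)}/g(x/q)$ reproduces the left side of \eqref{mnbv}.

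In more detail, here are the steps in order. (1) From \eqref{ff}, $g$ solves $\tfrac1q D_{q^{-1}}g = A g$ with $g(0)=1$; combined with \eqref{lopi} and \eqref{mll} from Theorem \ref{lem2}, we have the two facts $D_q(v/g)=1/g$ and $D_{q^{-1}}v - qAv = 1$. (2) Define $h(x) := v(x)/g(x)$ (up to the boundedness hypothesis, $h$ is a well-defined function on $I$, and $h(x)\to 0$ as $x\to 0$ since $x^\gamma/g(x)$ is bounded and $v(x)=O(x\cdot\sup|1/g|)$, so $v/g\to 0$). Then $D_q h(x)=1/g(x)$, so $u(x):=D_q h(x)/h(x) = 1/v(x)$, and $h$ satisfies \eqref{241887}. (3) Plug this $u$ into the bracket of \eqref{csw}. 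Using \eqref{mll} in the form $D_{q^{-1}}v(x) = 1 + qA(x)v(x)$ and the elementary identity relating $D_{q^{-1}}(1/v)$ to $D_{q^{-1}}v$, compute $\tfrac1q D_{q^{-1}}u(x)+\tfrac1q u(x)u(x/q)+A(x)u(x/q)+r(x)$ and show it equals $r(x)/(v(x/q))$ (the $D_{q^{-1}}u$, $uu$, and $Au$ terms telescope against the ``$1$'' and the ``$qAv$'' in \eqref{mll}, leaving only $r(x)$ divided by $v(x/q)$). (4) On the right side of \eqref{csw}, with $f(x)h(x/q)=f(x)v(x/q)/g(x/q)$ and $f(x/q)h(x/q)=f(x/q)v(x/q)/g(x/q)$, substitute $u(x/q)=1/v(x/q)$; the factor $v(x/q)$ cancels in the integrand against the $1/v(x/q)$ from step (3), and the right side becomes $\tfrac{f(x/q)}{g(x/q)}\big(y(x/q) - v(x/q)D_{q^{-1}}y(x)\big)$, which is exactly the right side of \eqref{mnbv}. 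This yields \eqref{mnbv}.

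I expect the main obstacle to be step (3): verifying the bracket collapses correctly. The bookkeeping is delicate because \eqref{mll} involves $D_{q^{-1}}v$ and $A(x)v(x)$ (not $v(x/q)$), whereas the $q$-Riccati bracket \eqref{yu} is written in terms of $u(x/q)=1/v(x/q)$ and $u(x)u(x/q)$; so one must carefully use the relation $v(x/q) = v(x) + (1-\tfrac1q)x\,D_{q^{-1}}v(x)$ (the $q$-Taylor/product rule) together with $D_{q^{-1}}(1/v)(x) = -\dfrac{D_{q^{-1}}v(x)}{v(x)v(x/q)}$ to convert everything into a common form. A secondary technical point is justifying that $v/g$ is a legitimate choice of $h$ and that all $q$-integrals converge — this is where the hypothesis ``$x^\gamma/g(x)$ bounded for some $0\le\gamma<1$'' is used, exactly as in Theorem \ref{lem2}, to guarantee $v(x)$ is finite and $h(0)=0$. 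Once the bracket identity is in hand, the rest is pure substitution into Theorem \ref{hnhhbgdds} and cancellation.
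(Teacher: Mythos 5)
Your overall strategy is exactly the paper's: set $u(x)=1/v(x)$, take $h(x)=v(x)/g(x)=\int_0^x g(t)^{-1}\,d_qt$ so that \eqref{241887} holds, and substitute into \eqref{csw}, using \eqref{mll} to collapse the $q$-Riccati bracket (the paper's own proof is precisely this substitution, stated without spelling out the computation). However, your step (3) contains a concrete error which, carried through literally, produces the wrong left-hand side. With $u=1/v$ one has $D_{q^{-1}}u(x)=-D_{q^{-1}}v(x)/\bigl(v(x)v(x/q)\bigr)$, hence
\[
\tfrac1q D_{q^{-1}}u(x)+\tfrac1q u(x)u(x/q)+A(x)u(x/q)
=\frac{-D_{q^{-1}}v(x)+1+qA(x)v(x)}{q\,v(x)\,v(x/q)}=0
\]
by \eqref{mll}. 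So the bracket in \eqref{csw} collapses to $r(x)$ exactly, \emph{not} to $r(x)/v(x/q)$ as you assert. Consequently there is no cancellation of $v(x/q)$ in your step (4): the factor $v(x/q)$ coming from $h(x/q)=v(x/q)/g(x/q)$ survives in the integrand, and it is precisely the $v(x/q)$ appearing on the left of \eqref{mnbv}. Taken at face value, your steps (3)--(4) would give $\int \frac{f(x)}{g(x/q)}\,r(x)\,y(x)\,d_qx$ on the left, which is not \eqref{mnbv}. Your treatment of the right-hand side is correct. The proof is salvageable with this single correction; the muddled first paragraph (the aborted attempt to rewrite \eqref{mll} as an equation for $u$ with an unresolved ``something'') should simply be discarded, since the short computation above is all that is needed.
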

 \begin{proof}
In $({\ref{241887}})$ set $u(x)=\frac{1}{v(x)}$, where $v(x)$ is defined in Equation $({\ref{lopi}})$. Then
 \begin{equation*}
   \frac{D_qh(x)}{h(x)} = \frac{1}{v(x)} = \frac{\frac{1}{g(x)}}{\int_{0}^{x} \frac{1}{g(u)} d_qu}.
 \end{equation*}
 Hence,
  \begin{equation*}
  D_q \Big (\frac{h(x)}{\int_{0}^{x} \frac{1}{g(u)} d_qu}\Big)=0.
 \end{equation*}
 Therefore,
 \begin{equation*}
   h(x)= c  \int_{0}^{x} \frac{1}{g(u)} d_qu,
 \end{equation*}
 where $c$ is a constant, we can choose $c=1$. Hence,
 \begin{equation*}
  g(x) h(x) = g(x)\int_{0}^{x} \frac{1}{g(u)} d_qu = v(x).
 \end{equation*}
 Substituting with $u(x)$ and $v(x)$ into  Equation $({\ref{csw}})$, we get  Equation $({\ref{mnbv}})$.
 \end{proof}
 \vskip0.5cm
 The  solutions of the second-order $q$-difference equation, see \cite{Zei},
 \begin{equation}\label{gfgf}
       \frac{1}{q}  D_{q^{-1}} D_q y(x) -y(x)=0 \quad (x \in\mathbb{R})
      \end{equation}
      under the initial conditions
      \begin{equation*}
        y(0)=1, \quad D_qy(0)=0, \quad \text{and} \quad y(0)=0, \quad D_qy(0)=1
      \end{equation*}
are the functions $\cos(x;q)$ and $\sin(x;q)$ which are defined in Equation \eqref{coss1} and \eqref{sinn1}, respectively.
\begin{theorem}\label{gfdsa}
\begin{align}\label{cos}
        &\int \dfrac{x \cos(x;q) }{( \frac{-x^2}{q}(1-q)^2;q^2)_\infty}\,_2\phi_1(\frac{-x^2}{q} (1-q)^2,q^2;0;q^2,q)d_qx \nonumber\\&= \frac{-\sqrt{q}}{(\frac{-x^2}{q}(1-q)^2;q^2)_\infty}\left(\dfrac{ \sqrt{q} \cos(\frac{x}{q};q)}{ (1-q)}+ x \sin(q^{\frac{-1}{2}}x;q) \,_2\phi_1(\frac{-x^2}{q} (1-q)^2,q^2;0;q^2,q)\right),
     \end{align}
     and
\begin{align}\label{sin}
      &\int \dfrac{x\sin(x;q) }{ (\frac{-x^2}{q}(1-q)^2;q^2)_\infty}\,_2\phi_1(\frac{-x^2}{q} (1-q)^2,q^2;0;q^2,q)d_qx \nonumber\\& =\frac{1}{(\frac{-x^2}{q}(1-q)^2;q^2)_\infty}\left(  x\cos(q^{\frac{-1}{2}}x;q) \,_2\phi_1(\frac{-x^2}{q}(1-q)^2,q^2;0;q^2,q)-\dfrac{q \sin(x/q;q)}{1-q} \right),
     \end{align}
     where $\sin(x;q)$ and $\cos(x;q)$ are defined in  \eqref{sinn1} and \eqref{coss1}, respectively.
\end{theorem}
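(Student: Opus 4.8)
The plan is to invoke Theorem~\ref{jnuuwwww} with the specific data attached to $\cos(x;q)$ and $\sin(x;q)$. First I would compare \eqref{gfgf}, which both functions satisfy, with \eqref{mxlk}: this identifies $p(x)\equiv 0$ and $r(x)\equiv -1$, so $f(x)\equiv 1$ is a solution of \eqref{fht}, and \eqref{A1} gives $A(x)=\tfrac{1-q}{q}\,x$. Next I would solve the auxiliary equation \eqref{ff}, $\tfrac1q D_{q^{-1}}g(x)=A(x)g(x)$ with $g(0)=1$. Iterating the recurrence $g(x/q)=\bigl(1+\tfrac{(1-q)^2}{q}x^2\bigr)g(x)$ (equivalently matching power series in $x^2$ via the $q$-binomial theorem) yields
\[
 g(x)=\bigl(-q(1-q)^2x^2;q^2\bigr)_\infty ,\qquad g(x/q)=\Bigl(\tfrac{-x^2}{q}(1-q)^2;q^2\Bigr)_\infty ,
\]
which is exactly the weight in \eqref{cos}--\eqref{sin}. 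Because $g(x)\ge 1$ on $\mathbb{R}$, the boundedness requirement of Theorem~\ref{lem2} holds with $\gamma=0$ on $I=\mathbb{R}$, so $v(x)=g(x)\int_0^x g(u)^{-1}d_qu$ is well defined.

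The computational heart of the argument is to evaluate the Jackson integral $\int_0^x g(u)^{-1}d_qu$ in closed form. Expanding by \eqref{nun}, using $(a;q^2)_\infty=(a;q^2)_n(aq^{2n};q^2)_\infty$ to rewrite $1/g(xq^n)$, and summing the resulting series against ${}_2\phi_1(a,q^2;0;q^2,q)=\sum_{n\ge 0}(a;q^2)_n q^n$, I obtain
\[
 \int_0^x \frac{d_qu}{g(u)}=\frac{(1-q)x}{g(x)}\,{}_2\phi_1\!\bigl(-q(1-q)^2x^2,q^2;0;q^2,q\bigr),
\]
hence $v(x)=(1-q)x\,{}_2\phi_1(-q(1-q)^2x^2,q^2;0;q^2,q)$ and, after $x\mapsto x/q$,
\[
 v(x/q)=\frac{(1-q)x}{q}\,{}_2\phi_1\!\Bigl(\tfrac{-x^2}{q}(1-q)^2,q^2;0;q^2,q\Bigr),
\]
the infinite product in the normalization of the Jackson integral having cancelled against $g(x/q)$; this is precisely the $_2\phi_1$ that appears in the statement.

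Finally I would substitute $f\equiv 1$, $r\equiv -1$, the above $g$ and $v$, and $y(x)=\cos(x;q)$ into \eqref{mnbv}; rearranging the resulting identity for the integral, after collecting the constant $\tfrac{q}{1-q}$ and using the $q$-difference relation for $D_{q^{-1}}\cos(x;q)$ (which is what brings in $\sin(q^{-1/2}x;q)$ and the factors of $\sqrt q$), gives formula \eqref{cos}. Repeating verbatim with $y(x)=\sin(x;q)$ and the corresponding relation for $D_{q^{-1}}\sin(x;q)$ gives \eqref{sin}. I expect the main obstacle to be the middle step: obtaining the closed form of $g$ and, above all, recognizing $\int_0^x g(u)^{-1}d_qu$ as a $_2\phi_1$ and tracking the $x\mapsto x/q$ shift carefully so that the product normalizations cancel cleanly; once $v(x/q)$ is in hand the remainder is bookkeeping of constants in \eqref{mnbv} together with the standard derivative formulas for $\cos(x;q)$ and $\sin(x;q)$.
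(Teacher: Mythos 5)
Your proposal is correct and follows exactly the paper's own route: identify $p\equiv 0$, $r\equiv -1$, $f\equiv 1$, $A(x)=\tfrac{(1-q)x}{q}$, solve \eqref{ff} to get $g(x)=(-q(1-q)^2x^2;q^2)_\infty$, apply Theorem~\ref{lem2}/\ref{jnuuwwww} to obtain $v(x)=(1-q)x\,{}_2\phi_1(-q(1-q)^2x^2,q^2;0;q^2,q)$, and substitute into \eqref{mnbv} together with \eqref{diffff1}--\eqref{difff1}. The only difference is that you spell out the telescoping evaluation of $\int_0^x g(u)^{-1}d_qu$ and the boundedness hypothesis, which the paper leaves implicit.
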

\begin{proof}
 By comparing  Equation $({\ref{gfgf}})$  with Equation $({\ref{mxlk}})$, we get $ p(x)=0 $ and $ r(x) = - 1.$ Then  $f(x) =1$ is a solution of $({\ref{fht}})$  and $g(x) = ( -q(1-q)^2 x^2;q^2)_\infty $ is a solution of $({\ref{ff}})$ with
($A(x)= \frac{x}{q}(1-q)$).  By Theorem ${\ref{lem2}}$ $$ v(x) =x(1-q) \,_2\phi_1(- q(1-q)^2 x^2 ,q^2;0;q^2,q). $$ Substituting  with $v(x)$ and $g(x)$ into   $({\ref{mnbv}})$ and using the $q$-difference equation
\eqref{diffff1} and \eqref{difff1}, respectively, we get the desired results.
\end{proof}
\vskip0.5cm
We need the following results to prove Theorem  ${\ref{gfdfsabb}}$ and Theorem  ${\ref{thyj}}$.
\begin{theorem}\label{lem233}
Let $I$ be an interval containing zero and
let $f(x)$ be a solution of the first order $q$-difference equation $({\ref{fht}})$ in $I$.
Assume that there exists $\eta$, $0\leq \eta <1 $ such that $\dfrac{x^\eta}{f(x)}$ is bounded on $I$.
 Then  the function
\begin{equation*}
  v(x)=f(x) \int_{0}^{x} \frac{1}{f(u)} d_qu, \quad x \in I
   \end{equation*}
satisfies
\begin{equation}\label{mljl}
      D_{q^{-1}}v(x) - q p(x) v(x) =1.
    \end{equation}
   \end{theorem}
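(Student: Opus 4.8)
The plan is to run the same argument that proves Theorem~\ref{lem2}, reading $p(x)$ in place of $A(x)$ and $f(x)$ in place of $g(x)$: indeed equation~$({\ref{fht}})$ is literally equation~$({\ref{ff}})$ with $A$ replaced by $p$, so the present statement is its exact analogue for the weight $f$ rather than $g$.

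First I would record that the boundedness hypothesis is exactly what makes the Jackson integral
$w(x):=\int_0^x \tfrac{1}{f(u)}\,d_qu=(1-q)x\sum_{n\ge 0}q^n/f(xq^n)$ converge on $I$: from $|x^\eta/f(x)|\le M$ one gets $|q^n/f(xq^n)|\le M|x|^{-\eta}q^{n(1-\eta)}$, and $\sum_n q^{n(1-\eta)}$ converges because $0\le\eta<1$ (this also records that $f$ does not vanish on $I\setminus\{0\}$). Once the series converges, applying $D_q$ termwise straight from the definition~$({\ref{newfgbv}})$ gives $D_qw(x)=1/f(x)$; this is the ``converse'' of the fundamental theorem~$({\ref{fundamen}})$ and is legitimate here since $w$ is continuous at $0$.

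Then, setting $v(x)=f(x)w(x)$, I would verify~$({\ref{mljl}})$ by the $q^{-1}$-product rule $D_{q^{-1}}(fw)(x)=f(x/q)\,D_{q^{-1}}w(x)+w(x)\,D_{q^{-1}}f(x)$. Using $D_{q^{-1}}w(x)=(D_qw)(x/q)=1/f(x/q)$ and rewriting~$({\ref{fht}})$ as $D_{q^{-1}}f(x)=q\,p(x)f(x)$, the two terms become $1$ and $q\,p(x)v(x)$, which is precisely $D_{q^{-1}}v(x)-q\,p(x)v(x)=1$. Equivalently --- and this is the packaging used in the proof of Theorem~\ref{lem2} --- one multiplies~$({\ref{mljl}})$ through by $1/f(x/q)$, recognises the left-hand side as $D_q\bigl(v/f\bigr)$, and reads off $v(x)/f(x)=\int_0^x \tfrac{1}{f(u)}\,d_qu$ from~$({\ref{fundamen}})$.

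The only step that is not pure formal manipulation is the convergence of $w$ together with the identity $D_qw=1/f$, and that is exactly what the hypothesis on $x^\eta/f(x)$ is there to supply; I anticipate no further obstacle, since the remainder is the same bookkeeping as in Theorem~\ref{lem2}.
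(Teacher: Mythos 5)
Your proof is correct and takes essentially the same route as the paper: the paper proves Theorem~\ref{lem233} by the same manipulation as Theorem~\ref{lem2} (recognising that \eqref{mljl} divided by $f(x/q)$ is $D_q\bigl(v/f\bigr)=1/f$ and invoking the fundamental theorem~\eqref{fundamen}), which is exactly your product-rule computation read in the reverse direction. You additionally supply the convergence estimate for the Jackson integral that the hypothesis on $x^\eta/f(x)$ is there to guarantee, a point the paper leaves implicit.
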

   \begin{proof}
The proof follows similarly to the proof of Theorem ${\ref{lem2}}$ and is omitted.
 \end{proof}
 \vskip0.5cm
 \begin{theorem}\label{jnuuwwkkww}
  Assume that $v(x)$ and $f(x)$ are defined as in  Theorem ${\ref{lem233}}$ in an interval $I$ containing zero. Then
    \begin{equation}\label{mnbuyv}
   \int  \frac{f(x)}{f(x/q)}\left ( v(x/q)   -\frac{1}{q} x   (1-q) \right) r(x) y(x)d_qx
   =   y(x/q) -  v(x/q) D_{q^{-1}} y(x),
   \end{equation}
   where $y(x)$ is a solution of $({\ref{mxlk}})$.
 \end{theorem}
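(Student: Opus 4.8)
The plan is to specialize the master identity \eqref{csw} of Theorem \ref{hnhhbgdds} exactly as in the proof of Theorem \ref{jnuuwwww}, but now with the function $h$ chosen so that its logarithmic $q$-derivative is governed by the $q$-antiderivative of $1/f$ rather than $1/g$. Concretely, I would set $u(x) = 1/v(x)$ in \eqref{241887}, where $v(x) = f(x)\int_0^x \frac{1}{f(u)}\,d_qu$ is the function from Theorem \ref{lem233}. Since $D_q h(x)/h(x) = 1/v(x) = \bigl(1/f(x)\bigr)\big/\int_0^x \frac{1}{f(u)}\,d_qu$, the quotient $h(x)\big/\int_0^x \frac{1}{f(u)}\,d_qu$ has vanishing $q$-derivative, so (choosing the free constant to be $1$) we get $h(x) = \int_0^x \frac{1}{f(u)}\,d_qu$ and hence $f(x)h(x) = v(x)$.

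The next step is to rewrite the combination that appears inside the integrand of \eqref{csw}. With $u = 1/v$, the Riccati-type bracket $\frac1q D_{q^{-1}}u(x) + \frac1q u(x)u(x/q) + A(x)u(x/q) + r(x)$ must be simplified using the defining relation \eqref{mljl}, namely $D_{q^{-1}}v(x) - q\,p(x)v(x) = 1$, together with \eqref{A1} which expresses $A(x)$ in terms of $p(x)$ and $r(x)$. The key algebraic point is that $D_{q^{-1}}(1/v)(x) = -\,\frac{D_{q^{-1}}v(x)}{v(x)v(x/q)}$, and substituting $D_{q^{-1}}v(x) = 1 + q\,p(x)v(x)$ collapses the $D_{q^{-1}}u$ and $u\,u(x/q)$ terms against each other, leaving a term proportional to $p(x)/v(x/q)$; combining this with the $A(x)u(x/q)$ term and using $A(x) = p(x) - \frac1q x(1-q)r(x)$ should reduce the whole bracket to $r(x)\bigl(1 - \frac{1}{q}x(1-q)/v(x/q)\bigr)$, which is exactly $r(x)$ times $\frac{1}{v(x/q)}\bigl(v(x/q) - \frac1q x(1-q)\bigr)$. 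Multiplying through by $f(x)h(x/q) = v(x/q)$ then produces the left-hand side of \eqref{mnbuyv}. For the right-hand side, $f(x/q)h(x/q) = v(x/q)$ and $u(x/q) = 1/v(x/q)$, so $f(x/q)h(x/q)\bigl(y(x/q)u(x/q) - D_{q^{-1}}y(x)\bigr) = y(x/q) - v(x/q)\,D_{q^{-1}}y(x)$, as claimed.

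I expect the main obstacle to be the bookkeeping in the bracket simplification: one has to track the shifts $x \mapsto x/q$ carefully (the identity \eqref{214} of Theorem \ref{hnhhbgdds} relating $h(x)/h(x/q)$ to $u(x/q)$ is the model for how the $x(1-q)$ factor enters), and verify that the $p(x)$-dependent pieces cancel cleanly against the contribution hidden in $A(x)$. The boundedness hypothesis on $x^\eta/f(x)$ from Theorem \ref{lem233} is what guarantees $v$ is well-defined and the Jackson integral defining $h$ converges near $0$, so no separate convergence argument is needed here. Once the bracket is reduced to the stated form the identity follows immediately from \eqref{csw}, so the proof is short modulo this one computation.
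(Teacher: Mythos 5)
Your proposal is correct and is exactly the argument the paper intends (the paper omits the proof, saying it follows the proof of Theorem \ref{jnuuwwww}): set $u=1/v$, get $f(x)h(x)=v(x)$, and use $D_{q^{-1}}v-qp\,v=1$ together with $A=p-\tfrac1q x(1-q)r$ to collapse the Riccati bracket to $\tfrac{r(x)}{v(x/q)}\bigl(v(x/q)-\tfrac1q x(1-q)\bigr)$. Only a cosmetic slip: $f(x)h(x/q)=\tfrac{f(x)}{f(x/q)}v(x/q)$, not $v(x/q)$ (it is $f(x/q)h(x/q)$ that equals $v(x/q)$), which is precisely why the prefactor $\tfrac{f(x)}{f(x/q)}$ appears on the left of \eqref{mnbuyv}; your final formulas already reflect this.
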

 \begin{proof}
   The proof follows  similarly to the proof of  Theorem ${\ref{jnuuwwww}}$ and is omitted.
 \end{proof}
 \vskip0.5cm
 \begin{theorem}\label{gfdfsabb}
\begin{align}\label{hnd1}
      & \int x \cos(x;q)d_qx = - q^{-\frac{1}{2}} x \sin( q^{-\frac{1}{2}}x;q) -   \cos(\frac{x}{q} ;q),\\&
       \int x \sin(x;q)d_qx = \frac{x}{q} \cos( q^{-\frac{1}{2}}x;q) -   \sin(\frac{x}{q} ;q),
     \end{align}
     where $\sin(x;q)$ and $\cos(x;q)$ are defined in  \eqref{sinn1} and \eqref{coss1}, respectively.
\end{theorem}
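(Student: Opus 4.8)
The plan is to specialize Theorem~\ref{jnuuwwkkww}. By construction $\cos(x;q)$ and $\sin(x;q)$ are solutions of the second-order $q$-difference equation \eqref{gfgf}, and comparing \eqref{gfgf} with \eqref{mxlk} one reads off $p(x)=0$ and $r(x)=-1$; hence the constant function $f(x)=1$ is a solution of \eqref{fht}. Taking $I$ to be a bounded interval containing the origin, the quotient $x^{\eta}/f(x)=x^{\eta}$ is bounded on $I$ already for $\eta=0$, so the hypotheses of Theorem~\ref{lem233} are met and the function it produces is simply
\[
v(x)=f(x)\int_{0}^{x}\frac{d_qu}{f(u)}=\int_{0}^{x}d_qu=x .
\]

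Next I would substitute $f(x)=1$, $r(x)=-1$ and $v(x)=x$ into \eqref{mnbuyv}. The only computation involved is the identity
\[
v\!\left(\frac{x}{q}\right)-\frac{1}{q}x(1-q)=\frac{x}{q}-\frac{x}{q}(1-q)=x ,
\]
which, together with $f(x)/f(x/q)=1$, collapses \eqref{mnbuyv} to
\[
-\int x\,y(x)\,d_qx=y\!\left(\frac{x}{q}\right)-\frac{x}{q}\,D_{q^{-1}}y(x)
\]
for every solution $y$ of \eqref{gfgf}; equivalently, $\int x\,y(x)\,d_qx=\frac{x}{q}D_{q^{-1}}y(x)-y(x/q)$.

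Finally I would set $y(x)=\cos(x;q)$ and then $y(x)=\sin(x;q)$ and rewrite the right-hand side using the appendix relations \eqref{diffff1} and \eqref{difff1}, i.e.\ the $q^{-1}$-derivatives $D_{q^{-1}}\cos(x;q)=-\sqrt{q}\,\sin(q^{-1/2}x;q)$ and $D_{q^{-1}}\sin(x;q)=\cos(q^{-1/2}x;q)$; substituting these into the reduced formula produces exactly the two stated identities. I do not expect a genuine obstacle here: the argument is a direct specialization of Theorems~\ref{lem233} and \ref{jnuuwwkkww}, so the main (and minor) points requiring attention are verifying the trivial boundedness hypothesis for $f\equiv1$ and carefully tracking the powers $q^{\pm1/2}$ that enter through the $q^{-1}$-derivatives of the $q$-trigonometric functions.
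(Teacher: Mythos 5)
Your proposal is correct and follows exactly the paper's own route: read off $p(x)=0$, $r(x)=-1$, $f(x)=1$ from \eqref{gfgf}, obtain $v(x)=x$ from Theorem \ref{lem233}, specialize \eqref{mnbuyv} to get $\int x\,y(x)\,d_qx=\frac{x}{q}D_{q^{-1}}y(x)-y(x/q)$, and finish with the derivative formulas \eqref{diffff1} and \eqref{difff1}. The algebraic simplification $v(x/q)-\frac{1}{q}x(1-q)=x$ and the tracking of the $q^{\pm 1/2}$ factors are both carried out correctly.
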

\begin{proof}
From   $({\ref{gfgf}})$, we have $ p(x)=0 $, $ r(x) = - 1  $  and  $f(x) =1$.
By Theorem ${\ref{lem233}}$, we get $v(x) = x $
is a solution of Equation $({\ref{mljl}})$. Substituting  with $v(x)$, $f(x)$ and using the $q$-difference equations
\eqref{diffff1} and \eqref{difff1}, respectively, we get the desired results.
\end{proof}
\begin{theorem}\label{thyj}
Let $n\in \mathbb{N}$.  If $p_n(x;-1;q)$ is the big $q$-Legendre polynomial which is  defined in \eqref{bigg1}, then
 \begin{align}\label{lmmmuyfv}
  &\int \frac{x p_n(x;-1;q)}{(q^2-x^2)} \left(\,_2\phi_1 ( \frac{x^2}{q^2} , q^{2};x^2;q^2,q)-1 \right)d_qx = \nonumber\\&   (q^{-n}-1)[n+1]_q \left(  p_n(\frac{x}{q};-1;q)+ \frac{x}{q} \,_2\phi_1 ( \frac{x^2}{q^2} , q^{2};x^2;q^2,q)\,_3\phi_2 ( q^{-n} , q^{n+1},x;q;-q;q,q) \right).
 \end{align}
\end{theorem}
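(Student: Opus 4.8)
The plan is to run the same machinery that produced Theorems \ref{gfdsa} and \ref{gfdfsabb}: attach the data $(p(x),r(x),f(x))$ to the big $q$-Legendre polynomial, build the auxiliary function $v$ via Theorem \ref{lem233}, and read the $q$-integral off \eqref{mnbuyv}. First I would recall from the appendix the definition \eqref{bigg1} of $p_n(x;-1;q)$ and the second-order $q$-difference equation it satisfies, and compare that equation with \eqref{mxlk}. The resulting $p(x)$ and $r(x)$ are rational in $x$ with denominator a constant multiple of $1-x^2$; in particular $r(x)$ is proportional to $q^{-n}(1-q^n)(1-q^{n+1})/(1-x^2)$, and rewriting $q^{-n}(1-q^n)=q^{-n}-1$ and $1-q^{n+1}=(1-q)[n+1]_q$ is where the prefactor $(q^{-n}-1)[n+1]_q$ of the right-hand side of \eqref{lmmmuyfv} comes from. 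A short verification then shows that $f(x)=1-x^2$ solves the first-order equation \eqref{fht}.

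Next I would apply Theorem \ref{lem233} with this $f$: on an interval $I\ni 0$ contained in $(-1,1)$ the function $1/f(x)=1/(1-x^2)$ is bounded, so the hypothesis holds with $\eta=0$ and
\be
v(x)=f(x)\int_0^x\frac{d_qu}{f(u)}=(1-x^2)\int_0^x\frac{d_qu}{1-u^2}
\ee
satisfies \eqref{mljl}. The Jackson integral is summable in closed form: since $(u^2;q^2)_k/(q^2u^2;q^2)_k=(1-u^2)/(1-q^{2k}u^2)$, one gets $\int_0^x\frac{d_qu}{1-u^2}=\frac{(1-q)x}{1-x^2}\,_2\phi_1(x^2,q^2;q^2x^2;q^2,q)$, hence
\be
v(x)=(1-q)x\,_2\phi_1(x^2,q^2;q^2x^2;q^2,q),\qquad v(x/q)=\frac{(1-q)x}{q}\,_2\phi_1\!\left(\frac{x^2}{q^2},q^2;x^2;q^2,q\right),
\ee
which is exactly the $_2\phi_1$ appearing in \eqref{lmmmuyfv}.

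Then I would substitute $f(x)=1-x^2$, the above $r(x)$, and this $v(x)$ into \eqref{mnbuyv} of Theorem \ref{jnuuwwkkww} with $y(x)=p_n(x;-1;q)$. On the left the elementary factor $\frac{f(x)}{f(x/q)}\bigl(v(x/q)-\frac{x(1-q)}{q}\bigr)r(x)$ collapses, after cancelling $1-x^2$, to a constant times $\dfrac{x}{q^2-x^2}\bigl(\,_2\phi_1(\frac{x^2}{q^2},q^2;x^2;q^2,q)-1\bigr)$, which is the integrand of \eqref{lmmmuyfv}. On the right $y(x/q)-v(x/q)D_{q^{-1}}y(x)$ has to be turned into the bracketed expression; for that I would use the backward $q$-derivative rule for the big $q$-Legendre polynomial (which follows from $D_{q^{-1}}(x;q)_k=-[k]_q(x;q)_{k-1}$, cf.\ \cite{askey}), expressing $D_{q^{-1}}p_n(x;-1;q)$ as an explicit multiple of a terminating $_3\phi_2$, followed by a $q$-hypergeometric transformation that puts that series into the form $_3\phi_2(q^{-n},q^{n+1},x;q,-q;q,q)$ occurring in the statement. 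Collecting the constants finishes the argument.

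The main obstacle I expect is this last algebraic step: performing the Jackson sum for $v(x)$ and, more substantially, identifying the combination $v(x/q)\,D_{q^{-1}}p_n(x;-1;q)$ — which naturally produces a $_3\phi_2$ whose lower parameters involve $q^2$ — with the base-$q$ series of \eqref{lmmmuyfv}, together with careful bookkeeping of the several $x\mapsto x/q$ shifts and of the constant $(q^{-n}-1)[n+1]_q$. Everything else is the routine substitution pattern already used for Theorems \ref{gfdsa} and \ref{gfdfsabb}.
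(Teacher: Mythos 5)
Your proposal follows the paper's proof essentially verbatim: the same $p(x)$, $r(x)$, $f(x)=1-x^2$, the same $v(x)=(1-q)x\,{}_2\phi_1(x^2,q^2;q^2x^2;q^2,q)$ obtained from Theorem \ref{lem233}, and the same substitution into \eqref{mnbuyv} combined with \eqref{need}; your extra step of summing the Jackson integral for $v$ in closed form is correct and merely makes explicit what the paper asserts without computation. One small remark: if you carry out the constant bookkeeping you describe, the factor $(q^{-n}-1)[n+1]_q$ comes out of $r(x)$ on the integrand side, so it should appear as a \emph{reciprocal} on the right-hand side of \eqref{lmmmuyfv} — the stated theorem seems to have that constant inverted, which is an issue with the paper's statement rather than with your plan.
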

\begin{proof}
By comparing   \eqref{kjhgdd} with  (${\ref{mxlk}}$),  we get
\begin{equation*}
 p(x) = \dfrac{ x(1+q)}{q^2  (x^2-1)},\quad  r(x) =   - \frac{[n]_q [n+1]_q }{q^{1+n}(x^2-1)}.
\end{equation*}
Then $f(x) = (1-x^2)$ is a solution of  (${\ref{fht}}$).
From Theorem ${\ref{lem233}}$, the function
\begin{equation*}
  v(x) = x(1-q) \,_2\phi_1 ( x^2 , q^{2};x^2 q^2;q^2,q)
\end{equation*}
is a solution of   $({\ref{mljl}})$. Substituting  with $v(x)$ and  $f(x)$  into   $({\ref{mnbuyv}})$ and using
\begin{equation}\label{need}
 D_{q^{-1}}  p_n(x;-1;q) = \frac{-1}{1-q} \,_3\phi_2 ( q^{-n} , q^{n+1},x;q;-q;q,q),
\end{equation}
 we obtain $({\ref{lmmmuyfv}})$.
\end{proof}
\vskip0.5cm
\section {$q$-integrals  from  linear fragments}
 In this  section, we introduce indefinite $q$-integrals involving
 linear fragment of  $({\ref{yu}})$ or $({\ref{yus}})$,  of the form
\begin{equation}\label{bbcc}
     \frac{1}{q}  D_{q^{-1}} u(x)+ p(x) u(x/q)+r(x)= 0,
   \end{equation}
 or
\begin{equation}\label{bbcc2}
       D_{q} u(x)+ p(x) u(qx)+r(x)= 0,
   \end{equation}
   respectively.
\begin{lem}\label{jnyyeffff}
Let $I$ be an interval containing zero.
 Let $p(x)$ and  $r(x)$  be continuous functions at zero. If $f(x)$ is a solution of Equation $({\ref{fht}})$,
then
   \begin{equation}\label{bvv}
   u(x) = \frac{-1}{f(x)}\int_{0}^{qx} f(t) r(t) d_qt
    \end{equation}
is a solution of  Equation $({\ref{bbcc}})$ in $I$.
\end{lem}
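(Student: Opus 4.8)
The plan is to verify directly that the function $u(x)$ given by \eqref{bvv} satisfies the linear fragment \eqref{bbcc}. First I would rewrite \eqref{bbcc} into a form amenable to $q$-integration: multiplying through by $f(x/q)$ and using the defining relation \eqref{fht} for $f$, namely $\frac1q D_{q^{-1}}f(x) = p(x)f(x)$, I expect the combination $\frac1q D_{q^{-1}}u(x) + p(x)u(x/q)$ to collapse into a single $q$-derivative. Concretely, set $w(x) = f(x/q)\,u(x/q)$ or, more cleanly, consider $f(x)u(x)$; then a short computation with the product rule for $D_{q^{-1}}$ (analogous to the manipulations carried out in \eqref{2147}) should show that $\frac1q D_{q^{-1}}\bigl(f(x)u(x)\bigr)$ equals $f(x/q)\bigl(\frac1q D_{q^{-1}}u(x) + p(x)u(x/q)\bigr)$, after invoking \eqref{fht}. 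Hence \eqref{bbcc} is equivalent to $\frac1q D_{q^{-1}}\bigl(f(x)u(x)\bigr) = -f(x/q)\,r(x)$.

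Next I would integrate this identity. Writing $f(x)u(x) = -\int_0^{qx} f(t)r(t)\,d_qt$ from \eqref{bvv}, I need to check that applying $\frac1q D_{q^{-1}}$ to the right-hand side reproduces $-f(x/q)r(x)$. This is the $q$-analogue of the fundamental theorem of calculus: since $D_q\!\int_0^x g(t)\,d_qt = g(x)$ for $g$ continuous at zero, one gets $D_{q^{-1}}\!\int_0^{x} g(t)\,d_qt = \frac{1}{x(1-q^{-1})}\bigl(\int_0^x - \int_0^{x/q}\bigr)g(t)\,d_qt$, and because Jackson's integral over $[x/q,x]$ picks out exactly the term $(1-q)\cdot\frac{x}{q}\cdot g(x/q)\cdot$(appropriate power), this evaluates to $-q^{-1}g(x/q)$ up to the bookkeeping constants. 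Substituting $g(t) = f(t)r(t)$ and composing with the shift in the upper limit $qx$, the expression $\frac1q D_{q^{-1}}\bigl(-\int_0^{qx}f(t)r(t)\,d_qt\bigr)$ should come out to $-f(x/q)r(x)$, which is exactly what we want. Finally, since $p$ and $r$ are continuous at zero and $I$ contains zero, the Jackson integral $\int_0^{qx}f(t)r(t)\,d_qt$ converges and $u(x)$ is well-defined on $I$ (with the value at $0$ handled by continuity), so the formal manipulations are legitimate.

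The main obstacle I anticipate is purely bookkeeping: keeping the $q^{\pm 1}$ factors, the $(1-q)$ versus $(1-q^{-1})$ normalizations, and the argument shifts $x \mapsto x/q$ versus $x \mapsto qx$ straight when differentiating the Jackson integral with a dilated upper limit. There is no conceptual difficulty — the identity is just "differentiate the antiderivative" transported through the integrating factor $f$ — but one must be careful that the upper limit $qx$ in \eqref{bvv} (rather than $x$) is precisely what makes $D_{q^{-1}}$, as opposed to $D_q$, produce the clean result; an off-by-one in the power of $q$ would break the match with $r(x)$ evaluated at $x$ rather than $x/q$. I would therefore do this computation slowly, perhaps first verifying the special case $f\equiv 1$, $p\equiv 0$ (so \eqref{bbcc} reads $\frac1q D_{q^{-1}}u(x) = -r(x)$ and $u(x) = -\int_0^{qx}r(t)\,d_qt$), and then reinstating the integrating factor.
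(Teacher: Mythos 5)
Your strategy coincides with the paper's: multiply \eqref{bbcc} by the integrating factor $f$, use \eqref{fht} to collapse $\tfrac1q D_{q^{-1}}u(x)+p(x)u(x/q)$ into a total $q$-derivative of $f(x)u(x)$, and then invoke the fundamental theorem \eqref{fundamen}. That is exactly the paper's three-line proof, so the method is sound. However, the intermediate identity you display is off by an argument shift, and since you yourself single out this bookkeeping as the danger point, here is the correction. The product rule must be grouped as $D_{q^{-1}}\bigl(f(x)u(x)\bigr)=f(x)\,D_{q^{-1}}u(x)+u(x/q)\,D_{q^{-1}}f(x)$; the other grouping pairs $u(x)$ (not $u(x/q)$) with $D_{q^{-1}}f(x)$ and does not match the fragment. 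Using \eqref{fht} in the form $D_{q^{-1}}f(x)=q\,p(x)f(x)$, this gives $D_{q^{-1}}\bigl(f(x)u(x)\bigr)=q\,f(x)\bigl(\tfrac1q D_{q^{-1}}u(x)+p(x)u(x/q)\bigr)$, so \eqref{bbcc} is equivalent to $D_{q^{-1}}\bigl(f(x)u(x)\bigr)=-q\,f(x)r(x)$: the prefactor on the right is $f(x)$, not $f(x/q)$ as in your sketch. Rewriting this as $D_{q}\bigl(f(x)u(x)\bigr)=-q\,f(qx)r(qx)$ and applying \eqref{fundamen} yields $f(x)u(x)=-q\int_0^x f(qt)r(qt)\,d_qt=-\int_0^{qx}f(t)r(t)\,d_qt$, which is \eqref{bvv}. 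The slip is not merely cosmetic: your version $\tfrac1q D_{q^{-1}}\bigl(f(x)u(x)\bigr)=-f(x/q)r(x)$ would integrate to $f(x)u(x)=-q\int_0^x f(t)r(qt)\,d_qt$, which agrees with \eqref{bvv} only when $f$ is constant. With that one identity repaired, your argument reproduces the paper's proof.
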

\begin{proof}
Multiplying both sides of  $({\ref{bbcc}})$ by $  f(x)$,  we obtain
    \begin{equation*}
   D_{q^{-1}}\Big( f(x) u(x)\Big) = - qf(x)r(x),
 \end{equation*}
 or equivalently $$ D_{q}\Big( f(x) u(x)\Big) = - qf(qx)r(qx).$$ Hence, from $({\ref{fundamen}})$, we get  $({\ref{bvv}})$ and completes the proof.
 \end{proof}
 \vskip0.5cm
If $u(x)$ is a solution of the $q$-linear fragment $({\ref{bbcc}})$, then Equation $({\ref{csw}})$ becomes
 \begin{align}\label{mnxxv}
  & \int f(x) h(x/q) \left(\frac{1}{q} u(x) u(x/q) +\frac{1}{q} x r(x) (q-1)u(x/q)\right)y(x) d_qx \nonumber \\& = f(x/q) h(x/q) \left(y(x/q) u(x/q)- D_{q^{-1}} y(x)\right).
      \end{align}
\begin{theorem}\label{gfdfsa}
\begin{align}\label{mkiu1}
      & \int \frac{x^2  }{(x^2 q^{-1}  (1-q);q^2)_\infty}\cos(x;q)d_qx  = \frac{q }{(\frac{x^2}{q} (1-q);q^2)_\infty}\Big(x \cos(\frac{x}{q} ;q) +\sqrt{q}\sin( q^{\frac{-1}{2}}x;q)\Big),
     \end{align}
\begin{align}\label{mkiu2}
      & \int \frac{x^2   }{ (x^2 q^{-1} (1-q);q^2)_\infty}\sin(x;q)d_qx  = \frac{q}{(\frac{x^2}{q} (1-q);q^2)_\infty} \Big(x \sin( \frac{x}{q};q) - \cos (q^\frac{-1}{2}x;q)\Big),
     \end{align}
     where $\sin(x;q)$ and $\cos(x;q)$ are defined in Equation \eqref{sinn1} and \eqref{coss1}, respectively.
\end{theorem}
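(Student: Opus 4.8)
The plan is to apply the linear-fragment identity \eqref{mnxxv} to the $q$-difference equation \eqref{gfgf}, whose two distinguished solutions are $\cos(x;q)$ and $\sin(x;q)$. First I would compare \eqref{gfgf} with \eqref{mxlk} to read off $p(x)=0$ and $r(x)=-1$; consequently $f(x)=1$ is a solution of \eqref{fht}. Since $p$ and $r$ are (trivially) continuous at $0$, Lemma~\ref{jnyyeffff} applies to the interval $I=\mathbb{R}$ and yields the solution of the linear fragment \eqref{bbcc},
\[
u(x)=-\frac{1}{f(x)}\int_0^{qx}f(t)r(t)\,d_qt=-\int_0^{qx}(-1)\,d_qt=\int_0^{qx}d_qt=qx .
\]

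Next I would determine the companion function $h$ from \eqref{241887}, i.e. solve the first-order $q$-difference equation $D_qh(x)=u(x)h(x)=qx\,h(x)$. Using the telescoping identity $(\alpha x^2;q^2)_\infty=(1-\alpha x^2)(\alpha q^2x^2;q^2)_\infty$ one checks that $D_q\bigl(1/(\alpha x^2;q^2)_\infty\bigr)=\alpha x\,/\bigl((1-q)(\alpha x^2;q^2)_\infty\bigr)$, so the choice $\alpha=q(1-q)$ gives
\[
h(x)=\frac{1}{(q(1-q)x^2;q^2)_\infty},\qquad h(x/q)=\frac{1}{(q^{-1}(1-q)x^2;q^2)_\infty},
\]
and $h(x/q)$ is precisely the weight appearing in \eqref{mkiu1}--\eqref{mkiu2}.

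Then I would substitute $f\equiv 1$, $u(x)=qx$ (hence $u(x/q)=x$) and $r\equiv -1$ into \eqref{mnxxv}. The kernel collapses: $\tfrac1q u(x)u(x/q)+\tfrac1q x\,r(x)(q-1)u(x/q)=x^2-\tfrac{(q-1)x^2}{q}=\tfrac{x^2}{q}$, so \eqref{mnxxv} becomes
\[
\int \frac{x^2/q}{(q^{-1}(1-q)x^2;q^2)_\infty}\,y(x)\,d_qx=\frac{x\,y(x/q)-D_{q^{-1}}y(x)}{(q^{-1}(1-q)x^2;q^2)_\infty}.
\]
Multiplying through by $q$ and taking $y(x)=\cos(x;q)$ yields \eqref{mkiu1} once the value of $D_{q^{-1}}\cos(x;q)$ from \eqref{diffff1} is inserted; taking $y(x)=\sin(x;q)$ yields \eqref{mkiu2} once $D_{q^{-1}}\sin(x;q)$ from \eqref{difff1} is inserted. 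A sign check confirms the stated right-hand sides (the extra summand $+\sqrt q\,\sin(q^{-1/2}x;q)$ in \eqref{mkiu1} is $-D_{q^{-1}}\cos(x;q)$, and $-\cos(q^{-1/2}x;q)$ in \eqref{mkiu2} is $-D_{q^{-1}}\sin(x;q)$).

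The argument is essentially mechanical once the linear-fragment setup of this section is in place; the only step needing a little care is the identification of $h$, i.e. solving $D_qh=qx\,h$, recognizing the correct infinite product, and noting that this $h$ is well defined on the relevant interval. The resulting formulas may be read as $q$-analogues of $\int x^2\cos x\,dx$ and $\int x^2\sin x\,dx$.
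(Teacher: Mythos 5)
Your proposal is correct and follows essentially the same route as the paper: read off $p(x)=0$, $r(x)=-1$, $f(x)=1$ from \eqref{gfgf}, get $u(x)=qx$ from Lemma~\ref{jnyyeffff}, identify $h(x)=1/(q(1-q)x^2;q^2)_\infty$, and substitute into \eqref{mnxxv} using the $q$-derivatives of $\cos(\cdot;q)$ and $\sin(\cdot;q)$. The only slip is that you cite \eqref{diffff1} for $D_{q^{-1}}\cos(x;q)$ and \eqref{difff1} for $D_{q^{-1}}\sin(x;q)$ when the labels are the other way around, but your actual sign computation uses the correct formulas.
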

\begin{proof}
From Equation  $({\ref{gfgf}})$, we have $ p(x)=0 $, $ r(x) = - 1  $  and  $f(x) =1$ is a solution of Equation (${\ref{fht}}$).
By Lemma ${\ref{jnyyeffff}}$, we get $ u(x) =qx $
satisfies Equation $({\ref{bbcc}})$. Hence \be h(x) = \frac{1 }{(q x^2 (1-q);q^2)_\infty}\ee is a solution of Equation $({\ref{241887}})$. By substituting  with $u(x)$ and  $h(x)$ into Equation $({\ref{mnxxv}})$
and using the $q$-difference equation
\eqref{diffff1} and \eqref{difff1}, respectively, we get the desired results.
\end{proof}
\begin{theorem}
Let $n\in\mathbb{N}$. If $ p_n(x|q)$ is the little $q$-Legendre  polynomials defined in \eqref{little1},   $r_n=\frac {2-q^{-n}-q^{n+1}}{1-q}$, then
 \begin{align}\label{jmnhygfrr}
& \int \frac{x(qx;q)_\infty }{(q r_n x;q)_\infty}p_n(x|q) d_qx \nonumber\\&= \frac{q^{n} x(x;q)_\infty}{[n]_q [n+1]_q(r_nx;q)_\infty}\left(\frac{1}{q}(1-x)\,_2\phi_1(q^{-n+1} , q^{n+2};q^2;q,x)- p_n(\frac{x}{q}|q)\right).
  \end{align}
\end{theorem}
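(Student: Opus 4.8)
The plan is to follow the template of the preceding theorems, in particular the proof of Theorem~\ref{gfdfsa}: identify the coefficient functions from the second-order $q$-difference equation, solve the associated first-order equations, pick a solution of a linear fragment, and substitute into the specialised identity \eqref{mnxxv}. Concretely, I would first take the $q$-difference equation satisfied by the little $q$-Legendre polynomial $p_n(x|q)$ (recorded in the appendix alongside the definition \eqref{little1}) and match it with \eqref{mxlk}, reading off the rational coefficients $p(x)$ and $r(x)$; the eigenvalue part of $r(x)$ involves the factors $1-q^{-n}$ and $1-q^{n+1}$, which is where the constant $r_n=\frac{2-q^{-n}-q^{n+1}}{1-q}=[n+1]_q-q^{-n}[n]_q$ ultimately comes from. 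Solving \eqref{fht} then gives an explicit $f(x)$, built from $q$-shifted factorials.

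Next I would obtain a solution $u(x)$ of the linear fragment \eqref{bbcc} via Lemma~\ref{jnyyeffff}, namely $u(x)=-\tfrac{1}{f(x)}\int_0^{qx}f(t)r(t)\,d_qt$, and evaluate the Jackson integral in closed form. Since $f(t)r(t)$ is, up to constants, a $q$-difference $D_q$ of an explicit function (which is precisely how Lemma~\ref{jnyyeffff} is proved), the $q$-sum telescopes and $u(x)$ emerges as a simple rational function, with $r_n$ pinned down in the process; then \eqref{241887} determines the multiplier $h(x)$ as a ratio of $q$-shifted factorials, arranged so that $h$ contributes $1/(qr_nx;q)_\infty$ on the left and $h(x/q)$ contributes $(r_nx;q)_\infty$ on the right of \eqref{jmnhygfrr}. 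Substituting $f$, $u$, $h$ into \eqref{mnxxv}, the fragment factor $\tfrac1q u(x)u(x/q)+\tfrac1q xr(x)(q-1)u(x/q)$ collapses — using the explicit $r(x)$ — to the weight $\dfrac{x(qx;q)_\infty}{(qr_nx;q)_\infty}$ on the left-hand side, while on the right-hand side I would replace $D_{q^{-1}}y(x)$ using the backward-shift relation $D_{q^{-1}}p_n(x|q)=-q^{1-n}[n]_q[n+1]_q\,{}_2\phi_1(q^{-n+1},q^{n+2};q^2;q,x)$. This last relation follows from the forward-shift relation \cite[Eq.(3.12.8)]{askey} together with $D_{q^{-1}}g(x)=(D_qg)(x/q)$ and the identification ${}_2\phi_1(q^{-n+1},q^{n+2};q^2;q,x)=p_{n-1}(x/q;q,q|q)$. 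Collecting terms over a common denominator and using $(z;q)_\infty=(1-z)(qz;q)_\infty$ then yields \eqref{jmnhygfrr}.

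The main obstacle is the middle step: the closed-form evaluation of $u(x)=-\tfrac1{f(x)}\int_0^{qx}f(t)r(t)\,d_qt$ and, hand in hand with it, confirming that the resulting $h$ produces exactly the $q$-shifted factorials $(qr_nx;q)_\infty$ and $(r_nx;q)_\infty$ — this is the one place a genuine computation is unavoidable, being a telescoping $q$-sum, or, run in reverse, a direct verification that the candidate $u$ satisfies \eqref{bbcc}, i.e.\ an identity in $q^{\pm n}$. Everything after that — simplifying the fragment factor and the final recombination of $q$-shifted factorials, together with the known action of $D_{q^{-1}}$ on $p_n(x|q)$ — is routine and amounts to careful bookkeeping.
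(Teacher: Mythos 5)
Your proposal follows essentially the same route as the paper's proof: read off $p(x)=\frac{qx+x-1}{qx(qx-1)}$ and $r(x)=\frac{[n]_q[n+1]_q}{q^nx(1-qx)}$ from \eqref{kjhgd}, take $f(x)=x(1-qx)$, obtain $u(x)$ from Lemma~\ref{jnyyeffff} via \eqref{bvv} (the computation is immediate since $f(t)r(t)$ is constant, giving $u(x)=\frac{-q^{1-n}[n]_q[n+1]_q}{1-qx}$ and hence $h(x)=\frac{(qx;q)_\infty}{(r_nqx;q)_\infty}$), and substitute into \eqref{mnxxv} together with the backward-shift relation \eqref{4666}. The only point to reconcile is the constant in that shift relation (you write $-q^{1-n}[n]_q[n+1]_q$ where the paper's \eqref{4666} has $-q^{-n}[n]_q[n+1]_q$), a bookkeeping detail that does not affect the method.
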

\begin{proof}
By comparing  Equation \eqref{kjhgd} with  (${\ref{mxlk}}$),  we get
\begin{equation*}
  p(x) = \frac{qx+x-1}{qx(qx-1)},\quad r(x) =   \frac{ [n]_q [n+1]_q}{q^nx (1-qx)} .
\end{equation*}
Then $f(x) = x(1-qx)$ is a solution of  (${\ref{fht}}$).
 From  $({\ref{bvv}})$, we get  $u(x) = \frac{-q^{1-n}[n]_q[n+1]_q}{(1-qx)}$, $h(x)$ satisfies the $q$-difference equation $({\ref{241887}})$. Consequently, $ h(x)=\frac{(qx;q)_\infty}{(r_nqx;q)_\infty}$.
By substituting  with $u(x)$ and $h(x)$ into  $({\ref{mnxxv}})$
and using the $q$-difference equation
\begin{equation}\label{4666}
  D_{q^{-1}}p_n(x|q)=-q^{-n} [n]_q [n+1]_q  \,_2\phi_1(q^{1-n} , q^{n+2};q^2;q,x),
\end{equation}
 we get  $({\ref{jmnhygfrr}})$.
\end{proof}
\begin{theorem}
 Let $n\in\mathbb{N}$. If $p_n(x;-1;q)$ is the big $q$-Legendre polynomials defined in \eqref{bigg1}, $r_n= \frac{2-q^{-n}-q^{n+1}}{1-q}$, then
 \begin{align}\label{knywwww}
  &\int \frac{x^2 (x^2;q^2)_\infty}{(r_n x^2;q^2)_\infty} p_n(x;-1;q) d_qx=\nonumber\\&
  \frac{q^{n+2}(\frac{x^2}{q^2};q^2)_\infty}{[n]_q[n+1]_q(\frac{r_nx^2}{q^2};q^2)_\infty}\left(
  \frac{q^{n+1}(1-\frac{x^2}{q^2})}{[n+1]_q(1-q^n)}\,_3\phi_2 ( q^{-n} , q^{n+1},x;q;-q;q,q)-x p_n(\frac{x}{q};-1;q)\right).
 \end{align}
\end{theorem}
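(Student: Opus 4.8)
The plan is to treat \eqref{knywwww} exactly as the immediately preceding little $q$-Legendre identity, namely as a Section~4 computation using a \emph{linear} fragment of the $q$-Riccati equation. Since $p_n(x;-1;q)$ satisfies the second-order $q$-difference equation \eqref{kjhgdd}, I would first compare it with \eqref{mxlk} to read off $p(x)=\dfrac{x(1+q)}{q^2(x^2-1)}$ and $r(x)=-\dfrac{[n]_q[n+1]_q}{q^{1+n}(x^2-1)}$ (these are already recorded in the proof of Theorem~\ref{thyj}), so that $f(x)=1-x^2$ solves \eqref{fht}.

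The feature that makes the linear fragment explicit is that $f(x)r(x)=[n]_q[n+1]_q/q^{1+n}$ is a \emph{constant}. Hence Lemma~\ref{jnyyeffff} gives, through \eqref{bvv}, the solution $u(x)=-\dfrac{1}{f(x)}\displaystyle\int_0^{qx}f(t)r(t)\,d_qt=-\dfrac{q^{-n}[n]_q[n+1]_q\,x}{1-x^2}$ of the linear fragment \eqref{bbcc}. Using the elementary identity $(1-q)q^{-n}[n]_q[n+1]_q=1-r_n$ (which follows from $[n]_q[n+1]_q=\tfrac{(1-q^n)(1-q^{n+1})}{(1-q)^2}$), this reads $u(x)=\dfrac{(r_n-1)x}{(1-q)(1-x^2)}$, and a one-line check with $(a;q^2)_\infty=(1-a)(aq^2;q^2)_\infty$ shows that $h(x)=\dfrac{(x^2;q^2)_\infty}{(r_nx^2;q^2)_\infty}$ satisfies $D_qh(x)=u(x)h(x)$, because $h(qx)/h(x)=(1-r_nx^2)/(1-x^2)$; thus $h$ is an admissible choice in \eqref{241887}.

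The last step is to substitute $f$, $u$, $h$ into the linear-fragment identity \eqref{mnxxv}. On the left, the coefficient $\tfrac1q u(x)u(x/q)+\tfrac1q xr(x)(q-1)u(x/q)=\tfrac1q u(x/q)\bigl(u(x)+x(q-1)r(x)\bigr)$ collapses, the inner factor being $-q^{-n}[n]_q[n+1]_q x\big/\bigl(q(1-x^2)\bigr)$; then the factor $1-x^2$ from $f(x)$ cancels the one in the denominator, the remaining $q^2-x^2$ is absorbed via $(x^2/q^2;q^2)_\infty=(1-x^2/q^2)(x^2;q^2)_\infty$, and one is left with the integrand $\dfrac{x^2(x^2;q^2)_\infty}{(r_nx^2;q^2)_\infty}p_n(x;-1;q)$. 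On the right, $u(x/q)=-q^{1-n}[n]_q[n+1]_q x\big/(q^2-x^2)$ and, by \eqref{need}, $D_{q^{-1}}p_n(x;-1;q)=\dfrac{-1}{1-q}\,{}_3\phi_2(q^{-n},q^{n+1},x;q;-q;q,q)$; factoring $-q^{1-n}[n]_q[n+1]_q$ out of $y(x/q)u(x/q)-D_{q^{-1}}y(x)$ and using $(1-q)[n]_q=1-q^n$ recovers the bracket $\dfrac{q^{n+1}(1-x^2/q^2)}{[n+1]_q(1-q^n)}\,{}_3\phi_2(\cdots)-x\,p_n(x/q;-1;q)$, while $f(x/q)h(x/q)$ supplies the prefactor $\dfrac{q^{n+2}(x^2/q^2;q^2)_\infty}{[n]_q[n+1]_q(r_nx^2/q^2;q^2)_\infty}$ of \eqref{knywwww}.

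There is no conceptual difficulty: once $u$ and $h$ are pinned down, \eqref{mnxxv} produces the identity automatically. The delicate part is bookkeeping — getting the relation $(1-q)q^{-n}[n]_q[n+1]_q=1-r_n$ exactly right so that $h$ is the correct quotient of $q$-shifted factorials, and tracking the shifts $x^2\mapsto x^2/q^2$ together with the powers of $q$ so that the factors $1-x^2$ (coming from $f$) and $1-x^2$, $q^2-x^2$ (coming from $u$ and $r$) cancel cleanly and the final multiplicative constant matches \eqref{knywwww}.
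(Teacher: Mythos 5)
Your proposal is correct and coincides with the paper's own proof: the same $p$, $r$, $f(x)=1-x^2$, the same $u(x)=-q^{-n}[n]_q[n+1]_q\,x/(1-x^2)$ obtained from Lemma \ref{jnyyeffff} via \eqref{bvv}, the same $h(x)=(x^2;q^2)_\infty/(r_nx^2;q^2)_\infty$, and the same substitution into \eqref{mnxxv} together with \eqref{need}; you merely make explicit the bookkeeping (the identity $(1-q)q^{-n}[n]_q[n+1]_q=1-r_n$ and the verification $D_qh=uh$) that the paper leaves implicit. The only caveat, which you share with the printed statement rather than introduce, is that the integrand actually carries the denominator of $h(x/q)$, namely $(r_nx^2/q^2;q^2)_\infty$, rather than $(r_nx^2;q^2)_\infty$.
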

\begin{proof}
From  Equation \eqref{kjhgdd}, we have
\begin{equation*}
 p(x) = \dfrac{ x(1+q)}{q^2  (x^2-1)},\quad  r(x) =   - \frac{[n]_q [n+1]_q }{q^{1+n}(x^2-1)}.
\end{equation*}
Then $f(x) = (1-x^2)$ is a solution of (${\ref{fht}}$).
 From  $({\ref{bvv}})$, we have
\begin{equation*}
  u(x) =-q^{-n} [n]_q [n+1]_q \frac{x}{1-x^2},
\end{equation*}
and $ h(x) =\frac{(x^2;q^2)_\infty}{(r_n x^2;q^2)_\infty}$ is a solution of  $({\ref{241887}})$.
By substituting with $u(x)$ and $h(x)$ into  $({\ref{mnxxv}})$
and using the $q$-difference equation $({\ref{need}})$,
we get  $({\ref{knywwww}})$.
\end{proof}
\section {$q$-integrals from substitution of simple algebraic forms}
In this section, we  substitute into Equation ({\ref{csw}}) with simple algebraic forms for $u(x)$ which involve arbitrary constants,
such as
\begin{equation}\label{nhnn}
  u(x)=\frac{a}{x}+b,
\end{equation}
to derive indefinite $q$-integrals. Set
\begin{equation}\label{4444}
  S_q(x) :=\frac{1}{q}  D_{q^{-1}} u(x)+\frac{1}{q}u(x) u(x/q)+ A(x) u(x/q)+r(x).
\end{equation}
Then  ({\ref{csw}}) will be
\begin{equation}\label{567}
  \int f(x) h(x/q) S_q(x) y(x) d_qx = f(x/q) h(x/q) \left(y(x/q) u(x/q) - D_{q^{-1}} y(x)\right),
\end{equation}
where the constants  $a$ and $b$ in Equation ({\ref{nhnn}}) are chosen so that $S_q(x)$ has a simple
form.
Also, we  define
\begin{equation}\label{44442nb}
  T_q(x) := D_{q} u(x)+u(x) u(qx)+ \tilde{A}(x) u(qx)+r(x).
\end{equation}
Then  ({\ref{cswq}}) will be
\begin{equation}\label{567nb}
  \int F(x) k(qx) T_q(x) y(x) d_qx = F(x) k(x) \left( y(x) u(x) - D_{q^{-1}} y(x)\right).
\end{equation}
\begin{theorem}
Let $n \in \mathbb{N}$, $n\geq 2$.
   Let $h_n(x;q)$ be  the  discrete $q$-Hermite I  polynomial of degree $n$ which is defined in \eqref{hn1}.  Then
   \begin{align}\label{2145fdd}
 & \int x (q^2x^2;q^2)_\infty h_n(x;q) d_qx=
 \frac{(1-q)x(x^2;q^2)_\infty}{[n]_q-1} \left(\frac{q^n}{x}h_n(\frac{x}{q};q)-q^{n-1}[n]_q h_{n-1}(\frac{x}{q};q)  \right),
   \end{align}
   and
   \begin{align}\label{nbdgss}
     & \int x^{n-2} (q^2x^2;q^2)_\infty h_n(x;q) d_qx=
\frac{ x^{n}(x^2;q^2)_\infty}{[n-1]_q} \left(\frac{h_n(\frac{x}{q};q)}{x}- \frac{1}{q}h_{n-1}(\frac{x}{q};q)  \right).
   \end{align}
\end{theorem}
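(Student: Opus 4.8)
The plan is to feed the simple-algebraic-form machinery of Section~5 with $u(x)=\frac{a}{x}+b$ and to pick the constants $a,b$ so that the factor $S_q(x)$ of \eqref{4444} collapses onto exactly the monomials $x$ and $x^{n-2}$ appearing in the two stated integrals. As in the proof of Theorem~\ref{hermith}, the discrete $q$-Hermite~I polynomial $h_n(x;q)$ solves \eqref{kjdsmhgb}, so comparison with \eqref{mxlk} gives $p(x)=-\frac{x}{1-q}$, $r(x)=\frac{q^{1-n}[n]_q}{1-q}$ and $f(x)=(q^2x^2;q^2)_\infty$, whence \eqref{A1} yields $A(x)=-\frac{q^{-n}x}{1-q}$ (the coefficient already seen in \eqref{kkk4}).

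Next I would substitute $u(x)=\frac{a}{x}+b$ into \eqref{4444}. From $D_{q^{-1}}\!\big(\tfrac{a}{x}\big)=-\tfrac{aq}{x^{2}}$, from $u(x)u(x/q)=\tfrac{a^{2}q}{x^{2}}+\tfrac{ab(1+q)}{x}+b^{2}$, and from the above expressions for $A(x)$ and $r(x)$, a short calculation gives
\[
S_q(x)=\frac{a(a-1)}{x^{2}}+\frac{ab(1+q)}{qx}-\frac{bq^{-n}}{1-q}\,x+\frac{b^{2}}{q}+\frac{q^{1-n}\big([n]_q-a\big)}{1-q}.
\]
Each stated integral corresponds to a choice of $(a,b)$ that annihilates all but one term. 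For \eqref{2145fdd} take $a=1$ and $b=0$: then $S_q(x)$ reduces to the constant $\frac{q^{1-n}([n]_q-1)}{1-q}=\frac{q^{2-n}[n-1]_q}{1-q}$ (using $[n]_q-1=q[n-1]_q$), $u(x)=\frac1x$, and $h(x)=x$ solves \eqref{241887} because $D_qx=1$. For \eqref{nbdgss} take $a=[n]_q$ and $b=0$: then $S_q(x)=\frac{[n]_q([n]_q-1)}{x^{2}}$, $u(x)=\frac{[n]_q}{x}$, and $h(x)=x^{n}$ solves \eqref{241887} because $D_qx^{n}=[n]_qx^{n-1}$.

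In both cases I would then insert $f$, the corresponding $h$ and $u$ into the master identity \eqref{567}, put $y(x)=h_n(x;q)$, use $D_{q^{-1}}h_n(x;q)=[n]_q h_{n-1}(x/q;q)$ from \eqref{4666vgf} together with $f(x/q)=(x^{2};q^{2})_\infty$, and divide by the constant multiplying the integral; this constant is nonzero precisely because $n\ge 2$ makes $[n-1]_q\neq0$. Collecting the $h_n$ and $h_{n-1}$ terms and rewriting the prefactors via $[n]_q-1=q[n-1]_q$ and $1-q^{n}=(1-q)[n]_q$ then yields \eqref{2145fdd} and \eqref{nbdgss}. The only real obstacle is bookkeeping: computing $D_{q^{-1}}(a/x)=-aq/x^{2}$ correctly and tracking the powers of $q$ when tidying the prefactors. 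It is also worth noting that these outputs coincide with \eqref{her112} and \eqref{her3} --- unsurprisingly, since $u=1/x$ and $u=[n]_q/x$ are the $c=0$ specializations of \eqref{all} --- so the theorem illustrates that the algebraic-substitution route of Section~5 recovers the earlier Bernoulli-fragment computations.
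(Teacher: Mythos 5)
Your proposal is correct and matches the paper's own proof essentially verbatim: the paper also substitutes $u(x)=\frac{a}{x}+b$ into \eqref{4444}, arrives at the same expression for $S_q(x)$, and takes $(a,b)=(1,0)$ with $h(x)=x$ for \eqref{2145fdd} and $(a,b)=([n]_q,0)$ with $h(x)=x^n$ for \eqref{nbdgss} before applying \eqref{567} and \eqref{4666vgf}. Your closing observation that these recover \eqref{her112} and \eqref{her3} is also made (for \eqref{nbdgss}) in the remark following the theorem.
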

\begin{proof}
From  \eqref{kjdsmhgb},
\begin{equation*}
    p(x)= - \dfrac{x}{1-q}, \quad \quad r(x) =   \frac{ q^{1-n}[n]_q}{1-q}.
  \end{equation*}
  Hence $f(x) =   (q^2x^2;q^2)_\infty $ is a solution of   Equation (${\ref{fht}}$).
Set $u(x)$ as in (${\ref{nhnn}}$).
Then
\begin{equation}\label{njuffffj1}
S_q(x) =\frac{a(a-1)}{x^2}+\frac{ab(1+q)}{qx}+\frac{q^{1-n}([n]_q-a)-q^{-n}bx}{1-q} +\frac{b^2}{q}.
\end{equation}
If ${\small a=1}$ and $b=0$ in $({\ref{njuffffj1}})$,  then    \begin{equation*}
                                       S_q(x)= \frac{q^{2-n}[n-1]_q}{1-q},
                                     \end{equation*}
and $h(x) = x$ is a solution of  $({\ref{241887}})$. By  substituting with $u(x)$, $S_q(x)$ and $h(x)$ into  ({\ref{567}}) and using  $({\ref{4666vgf}})$, we get  $({\ref{2145fdd}})$.
 If  $a=[n]_q$ and $b=0$ in $({\ref{njuffffj1}})$, then   \begin{equation*}  S_q(x)= q[n]_q [n-1]_q \frac{1}{x^2}.\end{equation*} Hence  $h(x) = x^n$ is a solution of  $({\ref{241887}})$. Substituting with $u(x)$, $S_q(x)$ and $h(x)$ into  ({\ref{567}}) and using  $({\ref{4666vgf}})$, we get  $({\ref{nbdgss}})$.
\end{proof}
\begin{theorem}
  Let $n \in \mathbb{N}$, $n\geq 2$.  Let $\widetilde{h}_n(x;q)$ be the  discrete $q$-Hermite II  polynomial of degree $n$ which is defined in \eqref{hn2}. Then
   \begin{align}\label{hbggdmkl}
     &\int \frac{x}{(-x^2;q^2)_\infty}\widetilde{h}_n(x;q) d_qx = \dfrac{(1-q)x}{[n-1]_q(-x^2;q^2)_\infty} \left( \frac{\widetilde{h}_n(x;q)}{qx}-q^{-n} [n]_q \widetilde{h}_{n-1}(x;q)\right),
   \end{align}
   and
   \begin{align}\label{hbggdmklgn}
    &\int \frac{x^{n-2}}{(-x^2;q^2)_\infty} \widetilde{h}_n(x;q) d_qx = \dfrac{x^n}{[n-1]_q(-x^2;q^2)_\infty} \left( \frac{\widetilde{h}_n(x;q)}{x}- \widetilde{h}_{n-1}(x;q)\right).
   \end{align}
\end{theorem}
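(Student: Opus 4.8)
The plan is to reuse the Section~5 substitution machinery with $u(x)=\tfrac{a}{x}+b$, specialised to the discrete $q$-Hermite II setting. First I would recall, exactly as in the proof of Theorem~\ref{59}, that comparing the second-order $q$-difference equation~\eqref{kjdsmhg} with~\eqref{mx} gives $p(x)=-\tfrac{x}{1-q}$ and $r(x)=\tfrac{[n]_q}{1-q}$, so that $F(x)=\tfrac{1}{(-x^2;q^2)_\infty}$ solves~\eqref{frht5} and, by~\eqref{A2}, $\tilde{A}(x)=p(x)+x(1-q)r(x)=-\tfrac{q^{n}x}{1-q}$. With these choices Equation~\eqref{567nb} becomes the working identity
\begin{equation*}
 \int \frac{k(qx)}{(-x^2;q^2)_\infty}\,T_q(x)\,\widetilde{h}_n(x;q)\,d_qx
 =\frac{k(x)}{(-x^2;q^2)_\infty}\Bigl(u(x)\widetilde{h}_n(x;q)-D_{q^{-1}}\widetilde{h}_n(x;q)\Bigr),
\end{equation*}
where $T_q(x)$ is defined by~\eqref{44442nb} and $D_{q^{-1}}\widetilde{h}_n(x;q)=q^{1-n}[n]_q\widetilde{h}_{n-1}(x;q)$ by~\eqref{4666vgfh}.

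Next I would substitute $u(x)=\tfrac{a}{x}+b$ and work out $D_qu(x)=-\tfrac{a}{qx^2}$, $u(x)u(qx)=\tfrac{a^2}{qx^2}+\tfrac{ab(1+q)}{qx}+b^2$ and $\tilde{A}(x)u(qx)=-\tfrac{q^{n-1}a}{1-q}-\tfrac{q^{n}b\,x}{1-q}$, which collects to
\begin{equation*}
 T_q(x)=\frac{a(a-1)}{qx^2}+\frac{ab(1+q)}{qx}+b^2+\frac{[n]_q-q^{n-1}a}{1-q}-\frac{q^{n}b\,x}{1-q}.
\end{equation*}
The two stated identities then come from the two ways of reducing $T_q(x)$ to a monomial with $b=0$. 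For~\eqref{hbggdmkl} I would take $a=1$, $b=0$: the $x^{-2}$ term vanishes and the constant equals $\tfrac{[n]_q-q^{n-1}}{1-q}=\tfrac{[n-1]_q}{1-q}$, while $u(x)=\tfrac1x$ forces $k(x)=x$ to solve~\eqref{21745887s} since $D_{q^{-1}}x=1=\tfrac1x\cdot x$. Plugging $u$, $T_q$, $k(qx)=qx$ into the working identity and dividing both sides by the constant $\tfrac{q[n-1]_q}{1-q}$ yields~\eqref{hbggdmkl} after a routine rearrangement of a factor $q^{-1}$. For~\eqref{hbggdmklgn} I would instead take $a=q^{1-n}[n]_q$, $b=0$, which annihilates the constant term and leaves $T_q(x)=\tfrac{a(a-1)}{qx^2}=\tfrac{q^{1-2n}[n]_q[n-1]_q}{x^2}$ (using $a-1=q^{1-n}[n-1]_q$); here $u(x)=\tfrac{q^{1-n}[n]_q}{x}$ and $k(x)=x^{n}$ solves~\eqref{21745887s} because $D_{q^{-1}}x^{n}=q^{1-n}[n]_q x^{n-1}$. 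Substituting into the working identity, simplifying the left side with $k(qx)=q^{n}x^{n}$, and cancelling the common factor $q^{1-n}[n]_q[n-1]_q$ produces~\eqref{hbggdmklgn}. The hypothesis $n\geq2$ is used precisely to guarantee $[n-1]_q\neq0$, so that these divisions are legitimate.

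The argument is essentially bookkeeping: the $q$-integrals here are indefinite, so no convergence discussion is needed, and the derivative rule for $\widetilde{h}_n$ is quoted from the appendix. The one step requiring care --- and the main, rather mild, obstacle --- is pinning down the closed form of $T_q(x)$, especially the exponent of $q$ in the term $\tfrac{[n]_q-q^{n-1}a}{1-q}$, since a slip there would propagate the wrong value of $a$ into the rest of the computation; a convenient cross-check is that $a=b=0$ must collapse the working identity to~\eqref{db}, hence reproduce the Bernoulli-fragment identity~\eqref{jnhhsyl}.
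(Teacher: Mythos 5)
Your proposal is correct and follows essentially the same route as the paper: the same substitution $u(x)=\tfrac{a}{x}+b$ into \eqref{567nb}, the same expression for $T_q(x)$ (the paper's \eqref{mdifhfhfh}, with $q^{n}\cdot q^{-1}a=q^{n-1}a$), and the same two specialisations $a=1$ and $a=q^{1-n}[n]_q$ with $b=0$, giving $k(x)=x$ and $k(x)=x^n$ respectively. The computations check out, including $[n]_q-q^{n-1}=[n-1]_q$ and $a-1=q^{1-n}[n-1]_q$, so nothing further is needed.
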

\begin{proof}
From  \eqref{kjdsmhg},
\begin{equation*}
    p(x)= - \dfrac{x}{1-q}, \quad \quad r(x) =   \frac{ [n]_q}{1-q}.
  \end{equation*}
  Then   $ F(x) =   \frac{1}{(-x^2;q^2)_\infty}$  is a solution of  (${\ref{frht5}}$).
Set $u(x)$ as in   (${\ref{nhnn}}$), we get
\begin{equation}\label{mdifhfhfh}
T_q(x) =\frac{a(a-1)}{qx^2}+\frac{ab(1+q)}{qx}+\frac{[n]_q-q^n(q^{-1}a+bx)}{1-q} +b^2.
\end{equation}
If $a=1$ and $b=0$ in $({\ref{mdifhfhfh}})$,  then  \begin{equation*}
                           T_q(x)= \frac{[n-1]_q}{1-q}.
                          \end{equation*}
Therefore $k(x) = x$ is a solution of  $({\ref{21745887s}})$.  By  substituting with $u(x)$, $T_q(x)$ and $k(x)$ into $({\ref{567nb}})$  and using Equation $({\ref{4666vgfh}})$,  we get  $({\ref{hbggdmkl}})$. If $a=q^{1-n} [n]_q$ and $b=0$ in $({\ref{mdifhfhfh}})$, then
\begin{equation*}
  T_q(x)= q^{1-2n}[n]_q[n-1]_q \dfrac{1}{x^2}.
\end{equation*}
 Therefore $k(x) = x^n$ is a solution of  $({\ref{21745887s}})$.  By  substituting with $u(x)$, $T_q(x)$ and $k(x)$ into  $({\ref{567nb}})$  and using  $({\ref{4666vgfh}})$,  we get $({\ref{hbggdmklgn}})$.
\end{proof}
\vskip0.5cm
\begin{theorem}
 Let $n \in \mathbb{N}$. If $S_n(x;q)$ is  the  Stieltjes-Wigert  polynomial of degree $n$ defined in \eqref{stief},   then
   \begin{align}\label{1}
   \sum_{k=0}^{\infty} q^{\frac{k(k-5)}{2}+nk} x^k S_n(q^kx;q) = S_n(\frac{x}{q};q)+\frac{x}{1-q^n} S_{n-1}(qx;q),
  \end{align}
  and
  \begin{align}\label{12112}
   \sum_{k=0}^{\infty} q^{\frac{k(k-3)}{2}} x^k \left(1+q^{2+k}[n-1]_qx\right)S_n(q^kx;q) = S_n(\frac{x}{q};q)+\frac{x}{1-q} S_{n-1}(qx;q).
  \end{align}
  \end{theorem}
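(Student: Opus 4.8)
The plan is to run the Section~5 scheme \eqref{nhnn}--\eqref{567} for the Stieltjes--Wigert polynomial with the one-parameter choice $u(x)=[m]_q/x$, reading off \eqref{1} from $m=n$ and \eqref{12112} from $m=1$. First, $S_n(x;q)$ from \eqref{stief} solves the $q$-difference equation \eqref{kjdshg}; comparing it with \eqref{mxlk} gives $p(x)=\dfrac{1-qx}{q(1-q)x^2}$ and $r(x)=\dfrac{[n]_q}{(1-q)x^2}$, as in the derivation of \eqref{kmjsysy}. Then the solution $f(x)$ of \eqref{fht} satisfies $f(qx)=q^2xf(x)$, hence $f(q^kx)=q^{k(k-1)/2}x^kf(x)$ for $k\in\mathbb N_0$ and $f(x/q)=f(x)/(qx)$, and from \eqref{A1}
\[
A(x)=p(x)-\tfrac1q x(1-q)r(x)=\frac{1-(1+q-q^n)x}{q(1-q)x^2}.
\]

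Next, take $u(x)=[m]_q/x$ in \eqref{nhnn}. Since $1-[m]_q(1-q)=q^m$, the function $h(x)=x^m$ solves \eqref{241887}. Substituting $u$ into \eqref{4444} and using $D_{q^{-1}}(1/x)=-q/x^2$ together with the identities $[n]_q+q^n[m]_q=[n+m]_q$ and $[m]_q([m]_q-1)=q[m]_q[m-1]_q$, the $x^{-1}$ and constant parts of $S_q(x)$ cancel and the $x^{-2}$ coefficient collapses, leaving the closed form
\[
S_q(x)=\frac{[m]_q+q^{2m}[n-m]_q\,x}{(1-q)x^3};
\]
in particular $S_q(x)=\dfrac{[n]_q}{(1-q)x^3}$ for $m=n$, and $S_q(x)=\dfrac{1+q^2[n-1]_qx}{(1-q)x^3}$ for $m=1$.

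Finally, substitute $u$, $h=x^m$ and $S_q$ into \eqref{567}, write the indefinite $q$-integral as $\int_0^x\,d_qt$, and apply \eqref{fundamen}. The boundary term $H(x)=f(x/q)h(x/q)\bigl(S_n(x/q;q)u(x/q)-D_{q^{-1}}S_n(x;q)\bigr)$ is computed from $f(x/q)=f(x)/(qx)$ and $D_{q^{-1}}S_n(x;q)=\tfrac{-q}{1-q}S_{n-1}(qx;q)$ (equation \eqref{4666vf}); it is a linear combination of $S_n(x/q;q)$ and $S_{n-1}(qx;q)$, and $\lim_{k\to\infty}H(q^kx)=0$ because $f(q^kx)=q^{k(k-1)/2}x^kf(x)\to0$ dominates the polynomial growth of the remaining factors. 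Expanding the left-hand side by \eqref{nun} with $f(q^kx)=q^{k(k-1)/2}x^kf(x)$, $h(q^{k-1}x)=q^{m(k-1)}x^m$ and the value of $S_q(q^kx)$, the Jackson series carries the weight $q^{k(k+2m-5)/2}$, equal to $q^{k(k-5)/2+nk}$ when $m=n$ and to $q^{k(k-3)/2}$ when $m=1$, together with the factor $[m]_q+q^{2m+k}[n-m]_qx$, which for $m=1$ is exactly $1+q^{2+k}[n-1]_qx$. Cancelling the common prefactor $q^{-m}x^{m-2}[m]_qf(x)$ and using $[n]_q(1-q)=1-q^n$ yields \eqref{1} and \eqref{12112}, respectively.

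The main obstacle is the algebraic reduction of $S_q(x)$ in \eqref{4444}: one must check that for $u=[m]_q/x$ no terms of order higher than $x^{-3}$ survive and that the $x^{-3}$ and $x^{-2}$ coefficients take the stated form, which is precisely where the particular shapes of $p$, $r$ and $A$ for the Stieltjes--Wigert weight are used. Once $S_q(x)$ is known, the power bookkeeping in the Jackson sum and the vanishing of the boundary limit are routine and mirror the proofs of Theorems~\ref{kmjuqqqq} and~\ref{kmjuqqqqv}.
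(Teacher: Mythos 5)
Your proposal is correct and follows essentially the same route as the paper: you take the simple algebraic form $u(x)=a/x$ with $b=0$ in \eqref{nhnn}, reduce $S_q(x)$ to $\frac{[n]_q}{(1-q)x^3}$ for $a=[n]_q$ (with $h=x^n$) and to $\frac{1+q^2[n-1]_qx}{(1-q)x^3}$ for $a=1$ (with $h=x$), and then substitute into \eqref{567} using $f(q^kx)=q^{k(k-1)/2}x^kf(x)$ and \eqref{4666vf}, exactly as in the paper's proof. Your only departures are cosmetic — unifying the two cases under the parameter $m$ via the closed form $S_q(x)=\frac{[m]_q+q^{2m}[n-m]_qx}{(1-q)x^3}$, and writing out the Jackson-sum exponent bookkeeping and the vanishing boundary limit that the paper leaves implicit.
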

  \begin{proof}
 By comparing   \eqref{kjdshg} with  (${\ref{mxlk}}$), we get
  \begin{equation*}
    p(x)= \dfrac{ 1-qx}{q x^2 (1-q)},\quad \quad \quad r(x) =  \frac{ [n]_q}{x^2(1-q)}.
  \end{equation*}
Set $u(x)$ as in (${\ref{nhnn}}$).
Then
\begin{align}\label{njuffffj}
S_q(x) &=\frac{b+q[n]_q}{q(1-q)x^2}+\frac{a(a-[n]_q)}{x^2}+\frac{ab(1+q)-qb[n-1]_q}{qx} \nonumber\\&+\frac{a(1-x)}{(1-q)x^3}-\frac{b}{q(1-q)x}+\frac{b^2}{q}.
\end{align}
We set $a = [n]_q $ and $b=0$ in $({\ref{njuffffj}})$ then $$ S_q(x)= \frac{[n]_q}{(1-q)x^3}.$$ Therefore, $h(x) = x^n$ is a solution of   $({\ref{241887}})$. By substituting with $h(x)$, $S_q(x)$ and $u(x)$ into  $({\ref{567}})$, using
 $({\ref{hnu1}})$  and $({\ref{4666vf}})$,
we get  $({\ref{1}})$.\\ If $a = 1 $ and $b=0$ in $({\ref{njuffffj}})$,  then $$ S_q(x)= \frac{1+q^2[n-1]_q x}{(1-q)x^3}.$$ Therefore, $h(x) = x$ ia a solution of    $({\ref{241887}})$. By substituting with $h(x)$, $S_q(x)$ and $u(x)$ into $({\ref{567}})$ and using
 $({\ref{hnu1}})$  and $({\ref{4666vf}})$,
we get  $({\ref{12112}})$.
\end{proof}
\vskip0.5cm
\begin{remark}
\begin{itemize}\,
  \item The indefinite $q$-integral ({\ref{nbdgss}}) is  equivalent to  \cite[Eq.(67)]{qqq} ( if $m=n$ ) and to ({\ref{her3}}) in Theorem {\ref{hermith}}.\\
  \item The indefinite $q$-integral ({\ref{hbggdmklgn}}) is  equivalent to   \cite[Eq.(69)]{qqq} (if $m=n$ ) and    to   ({\ref{her23}}) in Theorem {\ref{59}}.
\end{itemize}
\end{remark}
\vskip0.5cm
\begin{center}
  \bf{Appendix }
\end{center}
\setcounter{equation}{0}
\renewcommand{\theequation}{I.\arabic{equation}}
\small{
The big $q$-Laguerre  polynomial \be \label{big} p_n(x;a,b;q):= \,_3\phi_2\left( \begin{array}{cccc}
                     q^{-n} , 0,x \\
                    aq,bq
                   \end{array}
\mid q;q\right) \ee  satisfies the second-order $q$-difference equation, see \cite[Eq.(3.11.5)]{askey},
\begin{equation}\label{kjhgf}
    \frac{1}{q} D_{q^{-1}} D_qy(x)+  \dfrac{ x-q(a+b-qab)}{ab q^2  (1-q)(1-x)}D_{q^{-1}}y(x) - \frac{ q^{-n-1} [n]_q}{ab(1-q)(1-x)} y(x) =0.
\end{equation}
The  $q$-Laguerre  polynomial of degree $n$
\begin{align}\label{lalpha}
  L_n^{\alpha}(x;q):= \frac{1}{(q;q)_n}\,_2\phi_1\left( \begin{array}{cccc}
                     q^{-n} ,-x \\
                    0
                   \end{array}
\mid q;q^{n+\alpha+1}\right),  \quad  \quad  \alpha > -1, n \in \mathbb{N}
\end{align}
   satisfies the second-order $q$-difference equation, see \cite[Eq.(3.21.6)]{askey},
\begin{align}\label{kjdhg}
    \frac{1}{q} D_{q^{-1}} D_qy(x)+  \dfrac{ 1-q^{\alpha+1}(1+x)}{q^{\alpha+1}x(1+x)(1-q)}D_{q^{-1}}y(x) + \frac{ [n]_q}{x(1-q)(1+x)} y(x) =0.
\end{align}
The  Stieltjes-Wigert  polynomials \be \label{stief}S_n(x;q):=\frac{1}{(q;q)_n}\,_1\phi_1\left( \begin{array}{cccc}
                     q^{-n}  \\
                    0
                   \end{array}
\mid q;-q^{n+1}x\right), \quad (n \in \mathbb{N}_0) \ee  satisfies the second-order $q$-difference equation, see \cite[Eq.(3.27.5)]{askey},
\begin{align}\label{kjdshg}
    \frac{1}{q} D_{q^{-1}} D_qy(x)+  \dfrac{ 1-qx}{q x^2 (1-q)}D_{q^{-1}}y(x) + \frac{ [n]_q}{x^2(1-q)} y(x) =0.
\end{align}
The discrete $q$-Hermite I polynomial of degree $n$  \be\label{hn1} h_n(x;q):= q^{\binom{n}{2}} \,_2\phi_1\left( \begin{array}{cccc}
                     q^{-n},x^{-1}  \\
                    0
                   \end{array}
\mid q;-q x\right), n\in \mathbb{N}_0 \ee   satisfies the second-order $q$-difference equation, see \cite[Eq.(3.28.5)]{askey},
\begin{align}\label{kjdsmhgb}
    \frac{1}{q} D_{q^{-1}} D_qy(x)- \dfrac{x}{1-q}D_{q^{-1}}y(x) + \frac{ q^{1-n}[n]_q}{1-q} y(x) =0.
\end{align}
The discrete $q$-Hermite II polynomials of degree $n$  \be \label{hn2}\widetilde{h}_n(x;q):= x^n \,_2\phi_1\left( \begin{array}{cccc}
                     q^{-n},q^{-n+1} \\
                    0
                   \end{array}
\mid q^2;\frac{-q^2}{x^2}\right), n\in \mathbb{N}_0 \ee  satisfies the second-order $q$-difference equation, see \cite[Eq.(3.29.5)]{askey},
\begin{align}\label{kjdsmhg}
    \frac{1}{q} D_{q^{-1}} D_qy(x)- \dfrac{x}{1-q}D_q y(x) + \frac{ [n]_q}{1-q} y(x) =0.
\end{align}
The second Jackson $q$-Bessel function  \be \label{jac1} J_\nu^{(2)}(  x |q^2):=J_{\nu}^{(2)}(2 x(1-q);q^2)\ee satisfies the second-order $q$-difference equation \cite{ra}
\begin{align}\label{mnbvxx}
&\frac{1}{q} D_{q^{-1}}D_q y(x) + \dfrac{1-q  x^2 (1-q)}{x} D_{q} y(x) +\dfrac{q   x^2  - q^{1-v} [v]^2_q }{x^2} y(x)=0.
\end{align}
The  $q$-Airy function \be \label{ai1}Ai_q(x):= \,_1\phi_1 (0;-q;q,-x)\ee satisfies the second-order $q$-difference Equation, see \cite[Eq.(4)]{airy},
\begin{align}\label{kjdsmhrfgfggtr}
    \frac{1}{q} D_{q^{-1}} D_{q}y(x)- \dfrac{1+q}{qx(1-q)}D_{q^{-1}} y(x) +\frac{ 1}{qx(1-q)^2} y(x) =0.
\end{align}
The  Ramanujan function \be \label{ram1}A_q(x):= \,_0\phi_1 (-;0;q,-qx)\ee satisfies the second-order $q$-difference Equation, see \cite[Eq.(5)]{airy},
\begin{align}\label{kjdsmhrfgfggnuhtr}
    \frac{1}{q} D_{q^{-1}} D_{q}y(x)+ \dfrac{1-qx}{qx^2(1-q)}D_{q^{-1}} y(x) +\frac{ 1}{x^2(1-q)^2} y(x) =0.
\end{align}
 The big $q$-Legendre polynomials  \be \label{bigg1}p_n(x;-1;q) := \,_3\phi_2\left( \begin{array}{cccc}
                     q^{-n} , q^{n+1},x \\
                    q,-q
                   \end{array}
\mid q;q\right)  \ee  satisfies the second-order $q$-difference equation, see \cite[Eq.(3.5.17)]{askey},
\begin{align}\label{kjhgdd}
    \frac{1}{q} D_{q^{-1}} D_qy(x)+  \dfrac{ x(1+q)}{q^2  (x^2-1)}D_{q^{-1}}y(x) - \frac{[n]_q [n+1]_q}{q^{1+n}(x^2-1)} y(x) =0.
\end{align}
The little $q$-Legendre  polynomials \be \label{little1}p_n(x|q) := \,_2\phi_1\left( \begin{array}{cccc}
                     q^{-n} , q^{n+1} \\
                    q
                   \end{array}
\mid q;qx\right)  \ee   satisfies the second-order $q$-difference equation, see \cite[Eq.(3.12.16)]{askey},
\begin{align}\label{kjhgd}
    \frac{1}{q} D_{q^{-1}} D_qy(x)+   \frac{qx+x-1}{qx(qx-1)} D_{q^{-1}}y(x) + \frac{ [n]_q [n+1]_q}{q^nx (1-qx)}y(x) =0.
\end{align}
Jackson also introduced   three $q$-analogs of Bessel functions, \cite{jackson,rahman}, they  are defined by
\begin{align*}
  J_{\nu}^{(1)}(z;q):= \frac{(q^{v+1};q)_\infty}{(q;q)_\infty} \sum_{n=0}^{\infty} \dfrac{(-1)^n }{(q,q^{v+1};q)_n} (z/2)^{2n+\nu},\quad |z|<2,
\end{align*}
\begin{align*}
  J_{\nu}^{(2)}(z;q):= \frac{(q^{v+1};q)_\infty}{(q;q)_\infty} \sum_{n=0}^{\infty} \dfrac{(-1)^n q^{n(n+\nu)} }{(q,q^{v+1};q)_n} (z/2)^{2n+\nu},\quad z\in \mathbb C,
\end{align*}
\begin{align*}
  J_{\nu}^{(3)}(z;q):= \frac{(q^{v+1};q)_\infty}{(q;q)_\infty} \sum_{n=0}^{\infty} \dfrac{(-1)^n q^{\frac{n(n+1)}{2}} }{(q,q^{v+1};q)_n} (z)^{2n+\nu},\quad z\in \mathbb C.
\end{align*}
There are three known $q$-analogs of the trigonometric  functions, \{$\sin_q z$,$\cos_q z$\}, \{$\S_q z$,$\c_q z$\} and \{$\sin(z;q)$,$\cos(z;q)$\}. Each set of $q$-analogs is related to one of the three $q$-analogs of Bessel functions.\\
The functions $\sin_q z$ and $\cos_q z$ are defined for $|z| < \frac{1}{1-q}$ by
\begin{align*}
\sin_q z &:=\frac{(q^2;q^2)_\infty}{(q;q^2)_\infty} (z)^{1/2} J^{(1)}_{1/2} (2 z;q^2) = \sum_{n=0}^{\infty} (-1)^n  \frac{z^{2n+1}}{[2n+1]_q!},
\end{align*}
\begin{align*}
\cos_q z &:=\frac{(q^2;q^2)_\infty}{(q;q^2)_\infty} (z)^{1/2} J^{(1)}_{-1/2} (2 z;q^2)  = \sum_{n=0}^{\infty} (-1)^n  \frac{z^{2n}}{[2n]_q!}.
\end{align*}
The functions $\S_q z$ and $\c_q z$ are defined for $ z \in \mathbb C$ by
\begin{align*}
\S_q z&:=\frac{(q^2;q^2)_\infty}{(q;q^2)_\infty} (z)^{1/2} J^{(2)}_{1/2} (2 z;q^2)  = \sum_{n=0}^{\infty} (-1)^n  \frac{q^{2n^2+n} z^{2n+1}}{[2n+1]_q!},
\end{align*}
\begin{align*}
\c_q z&:=\frac{(q^2;q^2)_\infty}{(q;q^2)_\infty} (z)^{1/2} J^{(2)}_{-1/2} (2 z;q^2)  = \sum_{n=0}^{\infty} (-1)^n  \frac{q^{2n^2-n} z^{2n}}{[2n]_q!}.
\end{align*}
Finally, the functions $\sin(z;q)$ and $\cos(z;q)$ are defined for $ z \in \mathbb C$ by
\begin{align}\label{sinn1}
\sin(z;q)&:=\frac{(q;q)_\infty}{(q^{1/2};q)_\infty} z^{1/2} J^{(3)}_{1/2} (z(1-q);q^2) = \sum_{n=0}^{\infty} (-1)^n \frac{q^{n^2+n} z^{2n+1}}{\Gamma_q (2n+2)},
\end{align}
\begin{align}\label{coss1}
\cos(z;q)&:=\frac{(q;q)_\infty}{(q^{1/2};q)_\infty} (z q ^{-1/2})^{1/2} J^{(3)}_{-1/2} (z(1-q)/\sqrt{q};q^2)  = \sum_{n=0}^{\infty} (-1)^n  \frac{q^{n^2} z^{2n}}{\Gamma_q (2n+1)}.
\end{align}
The $q$-trigonometric  functions satisfy the $q$-difference equations
\begin{align}\label{inhb2555}
  D_{q^{-1}} \sin_q z =  \cos_q (\frac{z}{q}), \quad \quad D_{q^{-1}}  \cos_q z = - \sin_q (\frac{z}{q}).
\end{align}
\begin{align}\label{pssa}
  D_{q^{-1}} \S_q z =  \c_q (z), \quad \quad D_{q^{-1}} \c_q z = - \S_q (z).
\end{align}
\begin{align}\label{diffff1}
  D_{q^{-1}} \sin( z;q) =  \cos(q^{\frac{-1}{2}}  z;q).
\end{align}
\begin{align}\label{difff1}
  D_{q^{-1}} \cos( z;q) = -q^{\frac{1}{2}} \sin(q^{\frac{-1}{2}} z;q).
\end{align}
}


\begin{thebibliography}{99}
\footnotesize{
\bibitem{Zei}
M. H. Annaby and Z. S. Mansour, q-Fractional Calculus and Equations, Lecture Notes in Mathematics 2056, Springer-Verlag
Berlin Heidelberg, 2012.
\bibitem{conw}
J. T. Conway. Indefinite integrals of some special functions from a new method. Integral
Transforms Spec. Funct., 26(11):845-858, 2015.
\bibitem{conwa}
J. T. Conway. A Lagrangian method for deriving new indefinite integrals of special
functions. Integral Transforms Spec. Funct., 26(10):812-824, 2015.
\bibitem{co}
J. T. Conway. Indefinite integrals involving the incomplete elliptic integrals of the first
and second kinds. Integral Transforms Spec. Funct., 27(5):371-384, 2016.
\bibitem{conway}
J. T. Conway. Indefinite integrals of Lommel functions from an inhomogeneous Euler-
Lagrange method. Integral Transforms Spec. Funct., 27(3):197-212, 2016.
\bibitem{con}
J. T. Conway. Indefinite integrals of quotients of special functions. Integral Transforms
Spec. Funct., 29(4):269-283, 2018.
\bibitem{c1}
J. T. Conway. Indefinite integrals of special functions from inhomogeneous differential
equations. Integral Transforms Spec. Funct., 30(3):166-180, 2018.
\bibitem{c2}
J. T. Conway. New indefinite integrals from a method using Riccati equations. Integral
Transforms Spec. Funct., 29(12):927-941, 2018.
\bibitem{cpbg}
J. T. Conway. Indefinite integrals of special functions from hybrid equations. Integral
Transforms Spec. Funct., 31(4):253-267, 2020.
\bibitem{c3}
J. T. Conway. More indefinite integrals from Riccati equations. Integral
Transforms Spec. Funct., 30(12):1004-1017, 2019.
\bibitem{qqq}
G. E. Heragy, Z. S. Mansour  and K. M. Oraby. A Lagrangian method for indefinite
q-integrals. submitted.
\bibitem{rahman}
G. Gasper and M. Rahman. Basic Hypergeometric Series. Cambridge university Press,
Cambridge, 2004.
\bibitem{gfd}
W. Hahn. Beitr\"{a}ge zur Theorie der Heineschen Reihen. Math. Nachr., 2:340-379, 1949.
\bibitem{hein}
E. Heine. Handbuch der Kugelfunctionen, Theorie und Anwendungen, volume 1. G.
Reimer, Berlin, 1878.
\bibitem{j}
F.H. Jackson. On $q$-functions and a certain difference operator. Trans. Roy. Soc. Edin-
burgh, 46:64-72, 1908.
\bibitem{jackson}
F.H. Jackson. On $q$-definite integrals. Quart. J. Pure and Appl. Math., 41:193-203, 1910.
\bibitem{askey}
R. Koekoek and Rene F. Swarttouw. The Askey-scheme of hypergeometric orthogonal
polynomials and its $q$-analog.Reports of the faculty of Technical Mathematics and
Informatics, No. 98-17, 1998, Delft.
\bibitem{ra}
M. Rahman. A note on the orthogonality of Jackson's $q$-Bessel functions. Cand. Math.
Bull., 32:369-376, 1989.

\bibitem{airy}
Y. Ohyama. A unified approach to $q$-special functions of the Laplace type. arXiv:1103.5232.

}
\end{thebibliography}
\end{document}